\numberwithin{equation}{section}
\newtheorem{theorem}{Theorem}[section]
\newtheorem{definition}[theorem]{Definition}
\newtheorem{rem}[theorem]{Remark}
\newtheorem{cor}[theorem]{Corollary}
\newtheorem{lem}[theorem]{Lemma}
\newtheorem{pro}[theorem]{Proposition}
\renewcommand{\v}[1]{\ensuremath{\mathbf{#1}}}
\newcolumntype{C}[1]{>{\centering\arraybackslash}p{#1}}
\numberwithin{equation}{section}
\date{}
\begin{document}

\title{Spectral analysis and multigrid preconditioners for two-dimensional
space-fractional diffusion equations}

\author[1]{Hamid Moghaderi}
\author[1]{Mehdi Dehghan}
\author[2]{Marco Donatelli}
\author[3]{Mariarosa Mazza}
\affil[1]{\small Department of Applied
Mathematics, Faculty of Mathematics and Computer Science, Amirkabir
University of Technology, No. 424, Hafez Ave., 15914, Tehran, Iran}
\affil[2]{\small Department of Science and High Technology, University of Insubria, Via Valleggio 11, 22100 Como, Italy}
\affil[3]{\small Division of Numerical Methods for Plasma Physics, Max Planck Institute for Plasma Physics, Boltzmannstrasse 2, 85748 Garching bei M\"unchen, Germany}

\maketitle

\begin{abstract}

Fractional diffusion equations (FDEs) are a mathematical tool used for describing some special diffusion phenomena arising in many different applications like porous media and computational finance. In this paper, we focus on a two-dimensional space-FDE problem discretized by means of a second order finite difference scheme obtained as combination of the Crank-Nicolson scheme and the so-called weighted and shifted Gr\"unwald formula.

By fully exploiting the Toeplitz-like structure of the resulting linear system, we provide a detailed spectral analysis of the coefficient matrix at each time step, both in the case of constant and variable diffusion coefficients. Such a spectral analysis has a very crucial role, since it can be used for designing fast and robust iterative solvers. In particular, we employ the obtained spectral information to define a Galerkin multigrid method based on the classical linear interpolation as grid transfer operator and damped-Jacobi as smoother, and to prove the linear convergence rate of the corresponding two-grid method. The theoretical analysis suggests that the proposed grid transfer operator is strong enough for working also with the V-cycle method and the geometric multigrid. On this basis, we introduce two computationally favourable variants of the proposed multigrid method and we use them as preconditioners for Krylov methods. Several numerical results confirm that the resulting preconditioning strategies still keep a linear convergence rate.

\end{abstract}

\noindent\textbf{Keywords: }fractional diffusion equations; CN-WSGD scheme; spectral analysis; GLT theory; multigrid methods

\let\thefootnote\relax\footnote{\emph{Email addresses}: \texttt{hmoghaderi@yahoo.com; hamidm91@aut.ac.ir} (Hamid Moghaderi), \texttt{mdehghan@aut.ac.ir; mdehghan.aut@gmail.com} (Mehdi Dehghan), \texttt{\texttt{marco.donatelli@insubria.it}} (Marco Donatelli), \texttt{\texttt{mariarosa.mazza@ipp.mpg.de}} (Mariarosa Mazza)}


\section{Introduction}\label{Introduction}
Fractional diffusion equations (FDEs) are a special class of partial differential equations (PDEs) used for describing subdiffusion and superdiffusion phenomena. Among their applications: finance, biology, turbulent flow, and image processing \cite{fin,bio,turb,imag}. In more detail, a standard diffusion equation becomes a \emph{time-FDE} when the first order derivative in time is replaced by a fractional derivative in the Caputo form (see e.g., \cite{frac} for a precise definition) and/or a \emph{space-FDE} when the second order derivatives in space are replaced by the fractional Riemann-Liouville derivatives (to be defined below). In this paper, we deal with space-FDEs and we refer to them simply as FDEs.

Closed-form solutions for FDEs are rarely available, then various numerical discretization methods have been proposed in the last decades. Among them, finite difference schemes have been widely studied. For instance, in 2004 and 2006, Meerschaert and Tadjeran proposed a first order accurate finite difference scheme obtained combining an implicit Euler method in time with a first order approximation of the space derivatives called shifted Gr\"unwald formula \cite{24,25}. In 2006, together with Scheffler the same authors proposed a finite difference discretization method based on the classical Crank-Nicolson (CN) and the Richardson extrapolation, which guarantee the second order accuracy in time and space, respectively \cite{scheffler}. Both methods were proved to be consistent and unconditionally stable. More recently, in \cite{39} the authors, inspired by the shifted Gr\"unwald difference operator and the CN technique, defined a more general and flexible class of second order accurate methods combining the CN discretization in time with a second order approximation of the Riemann-Liouville fractional derivatives called weighted and shifted Gr\"unwald difference (WSGD). Such methods are briefly referred to as CN-WSGD.

From a numerical linear algebra viewpoint, it is worth noticing that, if one of the previously described finite difference schemes is chosen, then the resulting linear system shows a strong structure and indeed the related coefficient matrices can be seen as a sum of diagonal times Toeplitz matrices (see e.g., \cite{WWT}). As a consequence, the storage requirement is reduced from $\mathcal{O}(N^2)$ to $\mathcal{O}(N)$ and the complexity of the matrix-vector product from $\mathcal{O}(N^2)$ to $\mathcal{O}(N\log N)$, where $N$ is the mesh-size at each time step.

Recently, several fast iterative solvers which exploit the aforementioned Toeplitz-like structure have been proposed for solving linear systems coming from a finite difference discretization of one-dimensional FDEs. Among them, a circulant preconditioning strategy for Conjugate Gradient for Normal Residual method (CGNR) \cite{15}, and an efficient multigrid method \cite{29}. The former has been proven to be superlinearly convergent when the diffusion
coefficients are constant, the latter has been shown to be optimal when they are also equal.
In paper \cite{M2}, the authors recognize in the Toeplitz-like structure of the FDEs linear systems a Generalized Locally Toeplitz (GLT) sequence and then use the GLT machinery to study its singular value/eigenvalue distribution as the matrix size diverges. The obtained spectral information is employed to propose two competitive tridiagonal structure preserving preconditioners for Krylov methods and to show that the superlinear convergence of the CGNR method with the circulant preconditioner in \cite{15} cannot be replicated by any Krylov method in the nonconstant coefficient case, while the multigrid in \cite{29} is optimal also for nonconstant coefficients under the only hypothesis that they are uniformly bounded and positive. 

Of course, fast solvers for multidimensional FDE problems are of crucial interest. In the two-dimensional setting, we mention the ones proposed in \cite{WW,M1,LOD}. The first reference deals with an alternating-direction finite difference method for two-dimensional FDEs by fully decoupling the linear systems in $x$-direction and $y$-direction. In \cite{M1} a preconditioner defined by the incomplete LU factorization of a block-banded-banded-block approximation of the coefficient matrix is introduced, and the linear systems are solved by a preconditioned Generalized Minimal Residual (GMRES) method and a preconditioned CGNR method. In \cite{LOD} the authors propose a Locally One Dimensional (LOD) finite difference method for two-dimensional (and three-dimensional) Riesz FDE problem and use the LOD-multigrid method to solve the resulting linear systems. As for the one-dimensional case, also in the multidimensional setting the spectral analysis of the coefficient matrix is crucial to fully understand the behaviour of preconditioned
Krylov and multigrid methods when used for solving the associated linear system. However, for multidimensional FDEs such a spectral study is missing even in the case of constant diffusion coefficients.

In this paper, we focus on a two-dimensional FDE problem discretized by means of the CN-WSGD scheme, and we provide a detailed spectral analysis of the resulting coefficient matrix at each time step, both in the constant and variable diffusion coefficients case. The obtained spectral information suggests that classical preconditioners based on multilevel circulant matrices are not suited for solving the involved linear systems, while multigrid methods can be effective and robust solvers. On this basis, we propose a Galerkin multigrid method with classical linear interpolation as grid transfer operator and damped-Jacobi as smoother, and we prove that the corresponding two-grid method has a linear convergence rate. More precisely, we compute the spectral symbol of the involved matrix-sequences both in the constant and variable coefficients case and use this knowledge to formally prove the convergence and the optimality of the two-grid method, i.e., we prove that the number of iterations required to reach a prescribed accuracy depends neither on the discretization step nor on the order of the fractional derivatives. The analysis of the symbol is also used for defining the parameter of the damped Jacobi such that it satisfies the smoothing property \cite{20}.

Concerning the V-cycle, its linear convergence rate has been proven only for sequences of matrices in some trigonometric algebras \cite{222,0}. In spite of this, we show that the linear interpolation operator reveals powerful enough to work also under some perturbations and then we introduce two computationally favourable variants of the proposed multigrid, especially suited in the variable coefficients case. Indeed, in such a case, the setup phase of a multigrid based on the Galerkin approach is computationally too expensive because the structure of the coefficient matrix cannot be preserved, and this prevents a fast computation of the matrix-vector product. Therefore, we propose two preconditioning alternatives:
\begin{enumerate}
\item[1)] a Galerkin multigrid applied to a band preconditioner built using the Laplacian;
\item[2)] a geometric multigrid applied to the full coefficient matrix.
\end{enumerate}
The first strategy is computationally very attractive because it involves matrices with only five nonzero diagonals and hence has a linear cost, with a small constant, in $N$. However, its effectiveness reduces when both the fractional derivative orders are far from $2$. On the contrary, the second strategy is reliable for every fractional order, does not need the setup phase and is such that each iteration has a computational cost only about 4/3 times higher than the Jacobi smoother (see \cite{Trot}). Moreover, it is also well suited as a stand alone solver.

The paper is briefly summarized as follows. In Section~\ref{CN-WSGD} we introduce the two-dimensional FDEs equations and recall the CN-WSGD scheme. In Section~\ref{sec:spectr} we firstly describe the notion of symbol and of spectral distribution of a matrix-sequence, as well as the idea behind the GLT. Later, we use these tools to retrieve spectral information on the involved matrices. Such spectral information is then used in Section~\ref{sec:mgm} to study the convergence and the optimality of the proposed multigrid. Finally, Section~\ref{Example} is devoted to numerical examples and Section~\ref{Conclusions} contains conclusions and open problems.
\section{Problem setting: the CN-WSGD scheme for 2D space-FDEs}\label{CN-WSGD}
In this paper, we are interested in the following initial-boundary
value problem of two-dimensional space-FDE
\begin{align}\label{1}
\begin{cases}
\frac{\partial u(x,y,t)}{\partial
t}=d_+(x,y,t)\frac{\partial^{\alpha}
u(x,y,t)}{\partial_{+}x^{\alpha}}+d_-(x,y,t)\frac{\partial^{\alpha}
u(x,y,t)}{\partial_{-}x^{\alpha}}&\\
\qquad \qquad + \, e_+(x,y,t)\frac{\partial^{\beta}
u(x,y,t)}{\partial_{+}y^{\beta}}+e_-(x,y,t)\frac{\partial^{\beta}
u(x,y,t)}{\partial_{-}y^{\beta}}+v(x,y,t), \qquad \qquad &(x,y,t)\in\Omega\times(0,T],\\
u(x,y,t)=0,  &(x,y,t)\in\partial\Omega\times[0,T],\\
u(x,y,0)=u_0(x,y), &(x,y)\in\bar{\Omega},
\end{cases}
\end{align}
where $\Omega=(a_1,b_1)\times(a_2,b_2)$ is the space domain, and $\alpha,\beta\in(1,2)$ are
the fractional derivative orders with respect to $x$ and $y$,
respectively. The nonnegative functions $d_\pm(x,y,t)$ and
$e_\pm(x,y,t)$ are the diffusion coefficients and $v(x,y,t)$ is the
forcing term. The left-sided $(+)$ and the right-sided $(-)$
fractional derivatives in (\ref{1}) are defined in Riemann-Liouville
form as follows
\begin{align*}
\frac{\partial^{\alpha}
u(x,y,t)}{\partial_{+}x^{\alpha}}=\frac{1}{\Gamma(2-\alpha)}
\frac{d^2}{dx^2}\int_{a_1}^x
\frac{u(\xi,y,t)}{(x-\xi)^{\alpha-1}}d\xi,
& \qquad \qquad
\frac{\partial^{\alpha}
u(x,y,t)}{\partial_{-}x^{\alpha}}=\frac{1}{\Gamma(2-\alpha)}
\frac{d^2}{dx^2}\int_x^{b_1}
\frac{u(\xi,y,t)}{(\xi-x)^{\alpha-1}}d\xi,\\
\frac{\partial^{\beta}
u(x,y,t)}{\partial_{+}y^{\beta}}=\frac{1}{\Gamma(2-\beta)}
\frac{d^2}{dy^2}\int_{a_2}^y
\frac{u(x,\eta,t)}{(y-\eta)^{\beta-1}}d\eta,
& \qquad \qquad
\frac{\partial^{\beta}
u(x,y,t)}{\partial_{-}y^{\beta}}=\frac{1}{\Gamma(2-\beta)}
\frac{d^2}{dy^2}\int_y^{b_2}
\frac{u(x,\eta,t)}{(\eta-y)^{\beta-1}}d\eta.
\end{align*}
For the discretization of the FDE problem (\ref{1}) we apply the second order accurate CN-WSGD scheme (see \cite{39}). In order to introduce such a scheme, let us fix three positive integers $M,~n_1,~n_2$ and discretize the domain $\Omega\times[0,T]$ with
\begin{eqnarray*}
x_i=a_1+ih_x,&\quad h_x=\frac{(b_1-a_1)}{n_1+1},&\quad i=0,1,\dots,n_1,\\
y_j=a_2+jh_y,&\quad h_y=\frac{(b_2-a_2)}{n_2+1},&\quad j=0,1,\dots,n_2,\\
t^{(m)}=m\triangle t,& \;\triangle t=\frac{T}{M},\quad &\quad m=0,1,\dots,M,\\
t^{(m-1/2)}:=\frac{t^{(m)}+t^{(m-1)}}{2},&&\quad m=1,2,\dots,M.
\end{eqnarray*}
Let us define
\begin{align*}
{}_L\mathcal{D}^{\alpha}_{h_x}u(x_i,y_j,t^{(m)})=
\frac{1}{h_x^{\alpha}}\sum\limits_{k=0}^{i}w_k^{(\alpha)}u(x_{i-k+1},y_j,t^{(m)}),
& \qquad
{}_R\mathcal{D}^{\alpha}_{h_x}u(x_i,y_j,t^{(m)})=
\frac{1}{h_x^{\alpha}}\sum\limits_{k=0}^{n_1-i+1}w_k^{(\alpha)}u(x_{i+k-1},y_j,t^{(m)}),\\
{}_L\mathcal{D}^{\beta}_{h_y}u(x_i,y_j,t^{(m)})=
\frac{1}{h_y^{\beta}}\sum\limits_{k=0}^{j}w_k^{(\beta)}u(x_i,y_{j-k+1},t^{(m)}),
& \qquad
{}_R\mathcal{D}^{\beta}_{h_y}u(x_i,y_j,t^{(m)})=
\frac{1}{h_y^{\beta}}\sum\limits_{k=0}^{n_2-j+1}w_k^{(\beta)}u(x_i,y_{j+k-1},t^{(m)}).
\end{align*}
Using the WSGD formula with shifting parameters $(p,q)=(1,0)$, we obtain the following expression for the fractional derivatives
\begin{align*}
\frac{\partial^{\alpha} u(x_i,y_j,t^{(m)})}{\partial_{+}x^{\alpha}}=
{}_L\mathcal{D}^{\alpha}_{h_x}u(x_i,y_j,t^{(m)}) +\mathcal{O}(h_x^2),
& \qquad
\frac{\partial^{\alpha} u(x_i,y_j,t^{(m)})}{\partial_{-}x^{\alpha}}=
{}_R\mathcal{D}^{\alpha}_{h_x}u(x_i,y_j,t^{(m)}) +\mathcal{O}(h_x^2),\\
\frac{\partial^{\beta} u(x_i,y_j,t^{(m)})}{\partial_{+}y^{\beta}}=
{}_L\mathcal{D}^{\beta}_{h_y}u(x_i,y_j,t^{(m)})+\mathcal{O}(h_y^2),
& \qquad
\frac{\partial^{\beta} u(x_i,y_j,t^{(m)})}{\partial_{-}y^{\beta}}
={}_R\mathcal{D}^{\beta}_{h_y}u(x_i,y_j,t^{(m)})+\mathcal{O}(h_y^2).
\end{align*}
where $u(a_1,y,t)=u(x,a_2,t)=u(b_1,y,t)=u(x,b_2,t)=0$, because of the Dirichlet boundary conditions. Here the coefficients $w_k^{(\gamma)}$ are given by
\begin{eqnarray}\label{w}
w_0^{(\gamma)}=\frac{\gamma}{2}g_0^{(\gamma)},~~~~~w_k^{(\gamma)}=\frac{\gamma}{2}g_k^{(\gamma)}+\frac{2-\gamma}{2}g_{k-1}^{(\gamma)},~~~k\geq1,
\end{eqnarray}
where $g_k^{(\gamma)}$ are the alternating fractional binomial
coefficients defined as
\begin{equation}\label{eq:gk}
g_k^{(\gamma)}=(-1)^k
\binom{\gamma}{k}=\frac{(-1)^k}{k!}\gamma(\gamma-1)\dots(\gamma-k+1), \qquad k=0,1,\dots,
\end{equation}
with the formal notation $\binom{\gamma}{0}=1$. For
$\gamma=\alpha,\beta$, the fractional binomial coefficients
$g_k^{(\gamma)}$ have the following properties
\begin{align*}
\begin{cases}
g_0^{(\gamma)}=1, \quad g_1^{(\gamma)}=-\gamma, &g_2^{(\gamma)}>g_3^{(\gamma)}>\dots>0,\\
\sum\limits_{j=0}^{\infty}g_j^{(\gamma)}=0, &\sum\limits_{j=0}^{n}g_j^{(\gamma)}<0, \; n\geq 1,\\
g_j^{(\gamma)}=\mathcal{O}(j^{-(\gamma+1)}),&\\
\end{cases}
\end{align*}
proved in \cite{24,25,29}.

Similarly, the coefficients $w_k^{(\gamma)}$ satisfy few properties,
summarized in the following proposition (see \cite{39}).
\begin{pro}\label{ww}
Let $w_k^{(\gamma)}$, $1<\gamma<2$ be defined as in (\ref{w}).
Then the coefficients $w_k^{(\gamma)}$ satisfy the following properties
\[
\begin{cases}
w_0^{(\gamma)}=\frac{\gamma}{2}, \quad w_1^{(\gamma)}=\frac{2-\gamma-\gamma^2}{2}<0, \quad
w_2^{(\gamma)}=\frac{\gamma(\gamma^2+\gamma-4)}{4}, \quad
1\geq w_0^{(\gamma)}\geq w_3^{(\gamma)}\geq w_4^{(\gamma)} \dots \geq  0,\\
\sum\limits_{j=0}^{\infty}w_j^{(\gamma)}=0, \quad
\sum\limits_{j=0}^{n}w_j^{(\gamma)}<0,\;n\geq 2,\\
w_j^{(\gamma)}=\mathcal{O}(j^{-(\gamma+1)}).
\end{cases}
\]
\end{pro}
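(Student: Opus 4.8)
The plan is to derive everything directly from the definition (\ref{w}) of $w_k^{(\gamma)}$ together with the already listed properties of the fractional binomial coefficients $g_k^{(\gamma)}$, using throughout that $1<\gamma<2$ forces $\tfrac{\gamma}{2}>0$, $\tfrac{2-\gamma}{2}>0$ and $\tfrac{\gamma}{2}<1$.

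\emph{The three explicit values and the sign of $w_1^{(\gamma)}$.} First I would record from (\ref{eq:gk}) that $g_0^{(\gamma)}=1$, $g_1^{(\gamma)}=-\gamma$, $g_2^{(\gamma)}=\tfrac{\gamma(\gamma-1)}{2}$ and $g_3^{(\gamma)}=\tfrac{\gamma(\gamma-1)(2-\gamma)}{6}$. Substituting these into (\ref{w}) gives $w_0^{(\gamma)}=\tfrac{\gamma}{2}$, then $w_1^{(\gamma)}=\tfrac{\gamma}{2}(-\gamma)+\tfrac{2-\gamma}{2}=\tfrac{2-\gamma-\gamma^2}{2}$, and $w_2^{(\gamma)}=\tfrac{\gamma}{2}\cdot\tfrac{\gamma(\gamma-1)}{2}+\tfrac{2-\gamma}{2}(-\gamma)=\tfrac{\gamma(\gamma^2+\gamma-4)}{4}$ after simplification; the same computation yields the closed form $w_3^{(\gamma)}=\tfrac{\gamma(\gamma-1)(2-\gamma)(\gamma+3)}{12}$, which I will need below. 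The inequality $w_1^{(\gamma)}<0$ is just the elementary fact that $\gamma^2+\gamma>2$ for $\gamma>1$.

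\emph{The monotone chain $1\ge w_0^{(\gamma)}\ge w_3^{(\gamma)}\ge w_4^{(\gamma)}\ge\cdots\ge 0$.} The leftmost bound is $w_0^{(\gamma)}=\tfrac{\gamma}{2}<1$. For the positivity of the tail, note that for $k\ge 3$ both $g_k^{(\gamma)}>0$ and $g_{k-1}^{(\gamma)}>0$ (since $k-1\ge2$), so by (\ref{w}) $w_k^{(\gamma)}$ is a positive combination of positive numbers, hence $w_k^{(\gamma)}>0$. For the monotonicity I would write, for $k\ge 3$,
\[
w_k^{(\gamma)}-w_{k+1}^{(\gamma)}=\tfrac{\gamma}{2}\bigl(g_k^{(\gamma)}-g_{k+1}^{(\gamma)}\bigr)+\tfrac{2-\gamma}{2}\bigl(g_{k-1}^{(\gamma)}-g_k^{(\gamma)}\bigr),
\]
and observe that both parenthesised differences are positive because $g_2^{(\gamma)}>g_3^{(\gamma)}>\cdots>0$ and all indices involved are $\ge 2$; since the two weights are positive, $w_k^{(\gamma)}>w_{k+1}^{(\gamma)}$. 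The only comparison that is not immediate is the boundary link $w_0^{(\gamma)}\ge w_3^{(\gamma)}$; using the closed forms above and dividing by $\gamma>0$ this reduces to $(\gamma-1)(2-\gamma)(\gamma+3)\le 6$, which holds on $(1,2)$ since there $0<\gamma-1<1$, $0<2-\gamma<1$ and $\gamma+3<5$, so the product is $<5<6$.

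\emph{The summation identities and the decay.} Summing (\ref{w}) and reindexing gives the telescoped identity
\[
\sum_{j=0}^{n}w_j^{(\gamma)}=\tfrac{\gamma}{2}\sum_{j=0}^{n}g_j^{(\gamma)}+\tfrac{2-\gamma}{2}\sum_{j=0}^{n-1}g_j^{(\gamma)},\qquad n\ge 1,
\]
equivalently, the generating function of $\{w_k^{(\gamma)}\}$ equals $\bigl(\tfrac{\gamma}{2}+\tfrac{2-\gamma}{2}z\bigr)(1-z)^{\gamma}$. Letting $n\to\infty$ and using $\sum_{j\ge 0}g_j^{(\gamma)}=0$ yields $\sum_{j\ge 0}w_j^{(\gamma)}=0$ (the series converges absolutely by the decay estimate below); evaluating the identity at a finite $n\ge 2$ and applying $\sum_{j=0}^{m}g_j^{(\gamma)}<0$ for every $m\ge1$ to both sums on the right, together with the positivity of the two weights, gives $\sum_{j=0}^{n}w_j^{(\gamma)}<0$. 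Finally $w_j^{(\gamma)}=\mathcal O(j^{-(\gamma+1)})$ is immediate from (\ref{w}) and $g_j^{(\gamma)}=\mathcal O(j^{-(\gamma+1)})$, since also $g_{j-1}^{(\gamma)}=\mathcal O(j^{-(\gamma+1)})$. Everything here is elementary; the only place requiring genuine care is the sign bookkeeping in the second part, because $g_k^{(\gamma)}$ is monotone decreasing only from $k=2$ on and $g_1^{(\gamma)}<0$, so the monotonicity and positivity arguments must be confined to $k\ge 3$ and the single boundary comparison $w_0^{(\gamma)}\ge w_3^{(\gamma)}$ handled by the explicit cubic estimate.
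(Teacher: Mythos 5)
Your proof is correct, and in fact it fills a gap the paper deliberately leaves open: the paper states Proposition~\ref{ww} with the citation ``(see \cite{39})'' and does not reproduce the argument, so there is no in-paper proof to compare against. Your self-contained derivation from (\ref{w}), (\ref{eq:gk}) and the listed properties of the $g_k^{(\gamma)}$ is sound throughout. The explicit computations of $w_0^{(\gamma)},\dots,w_3^{(\gamma)}$ check out; the sign of $w_1^{(\gamma)}$ follows from $\gamma^2+\gamma>2$ on $(1,2)$; the positivity and monotonicity for $k\ge3$ correctly exploit that both weights $\tfrac{\gamma}{2},\tfrac{2-\gamma}{2}$ are positive and that $g_k^{(\gamma)}$ is positive and decreasing only from $k=2$ onward, which is exactly why the index range must start at $k=3$; and the boundary inequality $w_0^{(\gamma)}\ge w_3^{(\gamma)}$ reduces, as you note, to $(\gamma-1)(2-\gamma)(\gamma+3)\le 6$, which is comfortably true on $(1,2)$ (indeed $(\gamma-1)(2-\gamma)\le\tfrac14$, so the product is at most $\tfrac54$). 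The reindexing identity
\[
\sum_{j=0}^{n}w_j^{(\gamma)}=\tfrac{\gamma}{2}\sum_{j=0}^{n}g_j^{(\gamma)}+\tfrac{2-\gamma}{2}\sum_{j=0}^{n-1}g_j^{(\gamma)}
\]
correctly yields both the vanishing infinite sum and the strict negativity of the partial sums for $n\ge2$ (the restriction $n\ge2$ is needed precisely so that $n-1\ge1$ and the second partial sum of the $g_j$'s is strictly negative), and the $\mathcal O(j^{-(\gamma+1)})$ decay is inherited directly from that of $g_j^{(\gamma)}$. The generating-function remark $\bigl(\tfrac{\gamma}{2}+\tfrac{2-\gamma}{2}z\bigr)(1-z)^{\gamma}$ is an accurate and useful aside, consistent with the symbol formula (\ref{eq:fgamma}) derived later in the paper. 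No gaps.
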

The CN-WSGD scheme is obtained combining the CN scheme in time with the WSGD formula for the fractional derivatives and can be written as follows
\begin{eqnarray}\label{CNWSGD}
(1-\delta_x^{\alpha,(m)}-\delta_y^{\beta,(m)})u_{i,j}^{(m)}=(1+\delta_x^{\alpha,(m-1)}+\delta_y^{\beta,(m-1)})u_{i,j}^{(m-1)}+\triangle
t v_{i,j}^{(m-1/2)},
\end{eqnarray}
where $u_{i,j}^{(m)}\approx u(x_i,y_j,t^{(m)})$, $v_{i,j}^{(m-1/2)}=v(x_i,y_j,t^{(m-1/2)})$, and
\[
\delta_x^{\alpha,(m)}=\frac{d_{i,j}^{+,(m)}\triangle
t}{2}{}_L\mathcal{D}^{\alpha}_{h_x}+\frac{d_{i,j}^{-,(m)}\triangle
t}{2}{}_R\mathcal{D}^{\alpha}_{h_x}, \qquad
\delta_y^{\beta,(m)}=\frac{e_{i,j}^{+,(m)}\triangle
t}{2}{}_L\mathcal{D}^{\beta}_{h_y}+\frac{e_{i,j}^{-,(m)}\triangle
t}{2}{}_R\mathcal{D}^{\beta}_{h_y},
\]
with
\[
d_{i,j}^{\pm,(m)}=d_\pm(x_i,y_j,t^{(m)}), \qquad e_{i,j}^{\pm,(m)}=e_\pm(x_i,y_j,t^{(m)}).
\]

Now, we can write the matrix form of the above discretization. First,
we need to introduce the following notations.
Let $N=n_1n_2$ and define the following objects:
\begin{enumerate}
    \item The $N$-dimensional vectors
\begin{align*}
\mathbf{u}^{(m)}&=[u_{1,1}^{(m)},\dots,u_{n_1,1}^{(m)}, u_{1,2}^{(m)},\dots,u_{n_1,2}^{(m)},\dots,u_{1,n_2}^{(m)},\dots,u_{n_1,n_2}^{(m)}]^T,\\
\mathbf{d}^{(m)}_\pm&=[d_{1,1}^{\pm,(m)},\dots,d_{n_1,1}^{\pm,(m)},d_{1,2}^{\pm,(m)},\dots,d_{n_1,2}^{\pm,(m)},\dots,d_{1,n_2}^{\pm,(m)},\dots,d_{n_1,n_2}^{\pm,(m)}]^T,\\
\mathbf{e}^{(m)}_\pm&=[e_{1,1}^{\pm,(m)},\dots,e_{n_1,1}^{\pm,(m)},e_{1,2}^{\pm,(m)},\dots,e_{n_1,2}^{\pm,(m)},\dots,e_{1,n_2}^{\pm,(m)},\dots,e_{n_1,n_2}^{\pm,(m)}]^T,\\
\mathbf{v}^{(m-1/2)}&=[v_{1,1}^{(m-1/2)},\dots,v_{n_1,1}^{(m-1/2)},v_{1,2}^{(m-1/2)},\dots,v_{n_1,2}^{(m-1/2)},\dots,v_{1,n_2}^{(m-1/2)},\dots,v_{n_1,n_2}^{(m-1/2)}]^T.
\end{align*}
    \item The $N\times N$ diagonal matrices
\[
D^{(m)}_\pm=\rm diag(\mathbf{d}^{(m)}_\pm),
\qquad
E^{(m)}_\pm=\rm diag(\mathbf{e}^{(m)}_\pm).
\]
    \item The $\tilde{N} \times \tilde{N}$ Toeplitz matrix 
\begin{equation}\label{eq:AN}
A^{\gamma}_{\tilde{N}}=-
\left(%
\begin{array}{cccccc}
  w_1^{(\gamma)} & w_0^{(\gamma)} & 0 & \cdots & 0 & 0 \\
  w_2^{(\gamma)} & w_1^{(\gamma)} & w_0^{(\gamma)} & \ddots & \ddots & 0 \\
  \vdots & w_2^{(\gamma)} & w_1^{(\gamma)} & \ddots & \ddots & \vdots \\
  \vdots & \ddots & \ddots & \ddots & \ddots & \vdots \\
  w_{\tilde{N}-1}^{(\gamma)} & \ddots & \ddots & \ddots & w_1^{(\gamma)} & w_0^{(\gamma)} \\
  w_{\tilde{N}}^{(\gamma)} & w_{\tilde{N}-1}^{(\gamma)} & \cdots & \cdots & w_2^{(\gamma)} & w_1^{(\gamma)} \\
\end{array}%
\right).
\end{equation}
    \item The $N\times N$ matrices
\begin{equation}\label{AxAy}
A^{(m)}_x=D^{(m)}_+\left(I_{n_2}\otimes
A^{\alpha}_{n_1}\right)+D^{(m)}_-\left(I_{n_2}\otimes
(A^{\alpha}_{n_1})^T\right),\qquad
A^{(m)}_y=E^{(m)}_+\left(A^{\beta}_{n_2}\otimes
I_{n_1}\right)+E^{(m)}_-\left((A^{\beta}_{n_2})^T\otimes I_{n_1}\right),
\end{equation}
where $\otimes$ denotes the usual Kronecker product.
\end{enumerate}
Thus the CN-WSGD method (\ref{CNWSGD}) can be
written in the following matrix form
\begin{eqnarray}\label{system}
\left(I_{N}+rA^{(m)}_x+sA^{(m)}_y\right)\mathbf{u}^{(m)}=\left(I_{N}-rA^{(m-1)}_x-sA^{(m-1)}_y\right)\mathbf{u}^{(m-1)}+\triangle
t \mathbf{v}^{(m-1/2)},
\end{eqnarray}
where $r=\frac{\triangle t}{2h_x^{\alpha}}$, $s=\frac{\triangle t}{2h_y^{\beta}}$, and $I_N$ denotes the $N\times N$ identity matrix.
Multiplying  both sides by $\frac{1}{r}$, we obtain
\[
\bigg(\frac{1}{r}I_{N}+A^{(m)}_x+\frac{s}{r}A^{(m)}_y\bigg)\mathbf{u}^{(m)}=
\bigg(\frac{1}{r}I_{N}-A^{(m-1)}_x-\frac{s}{r}A^{(m-1)}_y\bigg)\mathbf{u}^{(m-1)}+2h_x^{\alpha} \mathbf{v}^{(m-1/2)}.
\]
By defining
\begin{align}
\mathbf{\mathcal{M}}^{(m)}_{(\alpha,\beta),N} &=\frac{1}{r}I_{N}+A^{(m)}_x+\frac{s}{r}A^{(m)}_y,
\label{eq:M}\\
\mathbf{b}^{(m)} &=\bigg(\frac{1}{r}I_{N}-A^{(m-1)}_x-\frac{s}{r}A^{(m-1)}_y\bigg)\mathbf{u}^{(m-1)}+2h_x^{\alpha} \mathbf{v}^{(m-1/2)},\nonumber
\end{align}
the linear system \eqref{system}, which has to be solved at each time step $t^{(m)}$, can be written as
\begin{eqnarray}\label{2}
\mathbf{\mathcal{M}}^{(m)}_{(\alpha,\beta),N}\mathbf{u}^{(m)}=\mathbf{b}^{(m)}.
\end{eqnarray}

Let $\gamma_0=(-1+\sqrt{17})/2\approx 1.562$, then the following results are proved in \cite{39}.
If $\gamma\in[\gamma_0,2)$, from Proposition~\ref{ww} it follows that
$w_2^{(\gamma)}\geq 0$, then the coefficient matrix
$\mathbf{\mathcal{M}}^{(m)}_{(\alpha,\beta),N}$ is a strictly diagonally dominant
M-matrix. In addition,
when the diffusion coefficients are time independent, the
CN-WSGD scheme is unconditionally stable and the truncation error is
$\mathcal{O}(h^2_x +h^2_y +\triangle t^2)$. On the
other side, when $d_+(x,y,t)=d_-(x,y,t)=d$ and
$e_+(x,y,t)=e_-(x,y,t)=e$, with $d$ and $e$ nonnegative constants, then the matrix
$\mathbf{\mathcal{M}}^{(m)}_{(\alpha,\beta),N}$ is a symmetric positive definite
Block-Toeplitz-Toeplitz-Blocks (BTTB) matrix, cf. Definition~\ref{def:toep}. 
\section{Spectral analysis of the coefficient matrix}\label{sec:spectr}
Given the notion of symbol and of spectral distribution in the
eigenvalue and singular value sense, in this section we provide a
spectral analysis of the coefficient matrix-sequence $\{ \mathbf{\mathcal{M}}^{(m)}_{(\alpha,\beta),N} \}_{_{N\in
\mathbb{N}}}$.
In the constant coefficient case, as already
observed in other papers (see e.g., \cite{15}), the coefficient
matrix-sequence is a BTTB sequence:
then using well-known spectral tools for BTTB sequences we determine
its symbol and study its spectral distribution. In the nonconstant
coefficients case, under appropriate conditions, we show that,
$\{\mathbf{\mathcal{M}}^{(m)}_{(\alpha,\beta),N}\}_{_{N\in
\mathbb{N}}}$ belongs to the GLT class and use the GLT machinery to
analyse its singular value/eigenvalue distribution. The resulting
spectral information will be then used in Section \ref{sec:mgm} for the analysis and the design of
numerical solvers to be applied to the considered linear systems.
\subsection{Constant diffusion coefficients case}\label{CDC}
Let us assume that the diffusion coefficients are constant. Under
this condition,
$\{\mathbf{\mathcal{M}}^{(m)}_{(\alpha,\beta),N}\}_{_{N\in
\mathbb{N}}}$ is a sequence of BTTB matrices, or equivalently of 2-level Toeplitz matrices according to the following definition.
\begin{definition}\label{def:toep}
Let $f\in L^1([-\pi,\pi]^d)$ and let
$\{f_\mathbf{k}\}_{_{\mathbf{k}\in \mathbb{Z}^d}}$ be the sequence
of its Fourier coefficients defined as
\begin{eqnarray*}
f_\mathbf{k}:=\frac{1}{(2\pi)^d}\int\limits_{[-\pi,\pi]^d}f(\boldsymbol\theta){\rm
e}^{-\mathbf{i}<\mathbf{k},\boldsymbol\theta>}d\boldsymbol\theta,
\end{eqnarray*}
where $\left\langle { \bf k,\boldsymbol\theta}\right\rangle=\sum_{t=1}^dk_t\theta_t$. Then the \emph{$d$-level Toeplitz matrix} of partial orders $\v{n}=(n_1,n_2,\ldots,n_d)$ associated with $f$ is
\[
T^{(d)}_N(f):=[f_{\v{i}-\v{j}}]_{\v{i},\v{j}=\v{1}}^{\v{n}}
= \left[\cdots\left[\left[f_{i_1-j_1,i_2-j_2,\ldots,i_d-j_d}\right]_{i_d,j_d=1}^{n_d}\right]_{i_{d-1},j_{d-1}=1}^{n_{d-1}}\cdots\right]_{i_1,j_1=1}^{n_1},
\]
where $N=\prod_{i=1}^d n_i$ is the order of the matrix. The function $f$ is called the \emph{symbol} of the matrix-sequence $\{T^{(d)}_{N}(f)\}_N$.
\end{definition}
To clarify the notation for the case $d=2$ of interest, the BTTB matrix of order $N$ associated with $f$ is
\begin{eqnarray*}
T^{(2)}_N(f)=\bigg[\bigg[f_{[i_1-j_1,i_2-j_2]}\bigg]_{_{i_1,j_1=1}}^{n_1}\bigg]_{_{i_2,j_2=1}}^{n_2},
\end{eqnarray*}
or equivalently,
\begin{eqnarray*}
T^{(2)}_N(f)=\sum\limits_{|j_1|\leq n_1}\sum\limits_{|j_2|\leq n_2}
f_{[j_1,j_2]}J_{n_1}^{[j_1]}\otimes J_{n_2}^{[j_2]},
\end{eqnarray*}
where $J_{n_i}^{[j_i]}\in \mathbb{R}^{n_i\times n_i}$ are
matrices whose entry $(s,t)$-th equals $1$ if $s-t=j_i$ and is $0$
elsewhere.

When $d=1$, i.e., for Toeplitz matrices, we simplify the notation using
\[T_N(f):=T^{(1)}_N(f).\]

\begin{definition}
The \emph{Wiener class} is the set of functions
\begin{eqnarray*}
f(\boldsymbol{\theta})=\sum\limits_{\v{k}\in \mathbb{Z}^d} f_{\v{k}}{\rm
e}^{\mathbf{i}<\mathbf{k},\boldsymbol\theta>}
\qquad \mbox{ such that } \qquad
\sum\limits_{\v{k}\in \mathbb{Z}^d} \left|f_{\v{k}}\right|<\infty.
\end{eqnarray*}
\end{definition}
We determine the sequence of symbols associated to
$\{\mathbf{\mathcal{M}}^{(m)}_{(\alpha,\beta),N}\}_{_{N\in
\mathbb{N}}}$ as a corollary of the following proposition.
\begin{pro}\label{pro2}
Let $\gamma\in(1,2)$ and let $A^{\gamma}_{\tilde{N}}$ be defined as in \eqref{eq:AN}. Then the symbol associated to the matrix-sequence
$\{A^{\gamma}_{\tilde{N}}\}_{_{\tilde{N}\in \mathbb{N}}}$
belongs to the Wiener class and its formal expression is given by
\begin{equation}\label{eq:fgamma}
f_{\gamma}(\xi)=-\sum\limits_{k=-1}^{\infty}w^{(\gamma)}_{k+1}{\rm
e}^{\mathbf{i}k\xi}=-\bigg[\frac{2-\gamma(1-{\rm
e}^{-\mathbf{i}\xi})}{2}\bigg]\bigg(1+{\rm
e}^{\mathbf{i}(\xi+\pi)}\bigg)^{\gamma}.
\end{equation}
\end{pro}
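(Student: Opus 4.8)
The plan is to identify the symbol of $\{A^{\gamma}_{\tilde N}\}$ directly from the Toeplitz structure of \eqref{eq:AN} and then simplify the resulting series into closed form using the generating function of the WSGD weights. First I would observe that, by definition of $A^{\gamma}_{\tilde N}$ in \eqref{eq:AN}, the generic $(\v{i}-\v{j})$-entry equals $-w^{(\gamma)}_{i-j+1}$ when $i-j\ge -1$ and $0$ otherwise; hence, reading off Definition~\ref{def:toep} with $d=1$, the candidate symbol is exactly the Fourier series $f_\gamma(\xi)=-\sum_{k=-1}^{\infty}w^{(\gamma)}_{k+1}{\rm e}^{\mathbf{i}k\xi}$, with $f_{\gamma,k}=-w^{(\gamma)}_{k+1}$ for $k\ge -1$ and $0$ otherwise. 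That this series defines an $L^1$ function (indeed a continuous one) and that $\{A^{\gamma}_{\tilde N}\}_{\tilde N}=\{T_{\tilde N}(f_\gamma)\}_{\tilde N}$ as a matrix-sequence follows once absolute summability of the coefficients is established.

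The membership in the Wiener class is where Proposition~\ref{ww} does the work: since $w^{(\gamma)}_j=\mathcal{O}(j^{-(\gamma+1)})$ with $\gamma>1$, the exponent $\gamma+1>2$ guarantees $\sum_j |w^{(\gamma)}_j|<\infty$, so $\sum_{k\in\mathbb{Z}}|f_{\gamma,k}|<\infty$ and $f_\gamma$ lies in the Wiener class; in particular the series converges uniformly to a continuous $2\pi$-periodic function, which legitimises all the manipulations that follow.

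The main computational step, and the real obstacle, is converting the series into the stated product form $-\big[\tfrac{2-\gamma(1-{\rm e}^{-\mathbf{i}\xi})}{2}\big]\big(1+{\rm e}^{\mathbf{i}(\xi+\pi)}\big)^{\gamma}$. The idea is to recall from \eqref{w} that $w^{(\gamma)}_k=\tfrac{\gamma}{2}g^{(\gamma)}_k+\tfrac{2-\gamma}{2}g^{(\gamma)}_{k-1}$ (with $g^{(\gamma)}_{-1}:=0$), together with the classical generating-function identity for the alternating binomial coefficients, $\sum_{k=0}^{\infty} g^{(\gamma)}_k z^k=\sum_{k=0}^{\infty}(-1)^k\binom{\gamma}{k}z^k=(1-z)^{\gamma}$, valid for $|z|\le 1$ by Abel's theorem since $\gamma>0$ ensures the boundary series converges absolutely. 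Substituting $z={\rm e}^{\mathbf{i}\xi}$, one writes
\begin{align*}
-\sum_{k=-1}^{\infty}w^{(\gamma)}_{k+1}{\rm e}^{\mathbf{i}k\xi}
&={\rm e}^{\mathbf{i}\xi}\Big(-\sum_{j=0}^{\infty}w^{(\gamma)}_{j}{\rm e}^{\mathbf{i}j\xi}\Big)
={\rm e}^{\mathbf{i}\xi}\Big(-\tfrac{\gamma}{2}\sum_{j\ge0}g^{(\gamma)}_j{\rm e}^{\mathbf{i}j\xi}-\tfrac{2-\gamma}{2}{\rm e}^{\mathbf{i}\xi}\sum_{j\ge0}g^{(\gamma)}_j{\rm e}^{\mathbf{i}j\xi}\Big).
\end{align*}
Collecting the common factor $(1-{\rm e}^{\mathbf{i}\xi})^{\gamma}$ yields
\[
f_\gamma(\xi)=-{\rm e}^{\mathbf{i}\xi}\Big(\tfrac{\gamma}{2}+\tfrac{2-\gamma}{2}{\rm e}^{\mathbf{i}\xi}\Big)(1-{\rm e}^{\mathbf{i}\xi})^{\gamma},
\]
and it remains to repackage the prefactor: factoring ${\rm e}^{\mathbf{i}\xi}$ appropriately gives ${\rm e}^{\mathbf{i}\xi}(\tfrac{\gamma}{2}+\tfrac{2-\gamma}{2}{\rm e}^{\mathbf{i}\xi})=\tfrac{1}{2}(2-\gamma(1-{\rm e}^{-\mathbf{i}\xi})){\rm e}^{2\mathbf{i}\xi}$, while $(1-{\rm e}^{\mathbf{i}\xi})^{\gamma}$ is rewritten using $1-{\rm e}^{\mathbf{i}\xi}=-{\rm e}^{\mathbf{i}\xi}(1-{\rm e}^{-\mathbf{i}\xi})$ together with $-{\rm e}^{\mathbf{i}\xi}=1+{\rm e}^{\mathbf{i}(\xi+\pi)}$ — here one has to be mildly careful about the branch of the $\gamma$-th power, but since $\gamma>0$ and we only need the formal Wiener-class expansion, the identity of the two power series settles the matter. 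Matching the remaining exponential factors then produces precisely \eqref{eq:fgamma}. The only genuinely delicate point is justifying the boundary evaluation $z\to{\rm e}^{\mathbf{i}\xi}$ of the binomial series and bookkeeping the complex powers consistently; everything else is a routine shift-of-index and algebraic rearrangement.
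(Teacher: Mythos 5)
Your approach is essentially the same as the paper's: identify the symbol as the Fourier series $-\sum_{k\ge -1}w^{(\gamma)}_{k+1}{\rm e}^{\mathbf{i}k\xi}$, show Wiener membership, then close the form via the binomial series. Your Wiener-class argument based on the decay $w^{(\gamma)}_j=\mathcal{O}(j^{-(\gamma+1)})$ is a valid and cleaner route than the paper's explicit evaluation of $\sum|w^{(\gamma)}_{k+1}|$.

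However, there is a genuine sign error at the index shift that the rest of your derivation does not repair. Setting $j=k+1$ gives ${\rm e}^{\mathbf{i}k\xi}={\rm e}^{\mathbf{i}(j-1)\xi}$, hence
\[
-\sum_{k=-1}^{\infty}w^{(\gamma)}_{k+1}{\rm e}^{\mathbf{i}k\xi}
={\rm e}^{-\mathbf{i}\xi}\Big(-\sum_{j=0}^{\infty}w^{(\gamma)}_{j}{\rm e}^{\mathbf{i}j\xi}\Big),
\]
with prefactor ${\rm e}^{-\mathbf{i}\xi}$, not ${\rm e}^{\mathbf{i}\xi}$. Your intermediate formula $f_\gamma(\xi)=-{\rm e}^{\mathbf{i}\xi}\big(\tfrac{\gamma}{2}+\tfrac{2-\gamma}{2}{\rm e}^{\mathbf{i}\xi}\big)(1-{\rm e}^{\mathbf{i}\xi})^{\gamma}$ is therefore off by ${\rm e}^{2\mathbf{i}\xi}$. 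The subsequent "repackaging" you propose to absorb this does not work: the asserted identity $-{\rm e}^{\mathbf{i}\xi}=1+{\rm e}^{\mathbf{i}(\xi+\pi)}$ is false (in fact $1+{\rm e}^{\mathbf{i}(\xi+\pi)}=1-{\rm e}^{\mathbf{i}\xi}$), and even the true factorization $1-{\rm e}^{\mathbf{i}\xi}=-{\rm e}^{\mathbf{i}\xi}(1-{\rm e}^{-\mathbf{i}\xi})$ raised to the $\gamma$-th power introduces branch ambiguities and still does not cancel the spurious ${\rm e}^{2\mathbf{i}\xi}$. With the corrected prefactor no such manipulation is needed: ${\rm e}^{-\mathbf{i}\xi}\big(\tfrac{\gamma}{2}+\tfrac{2-\gamma}{2}{\rm e}^{\mathbf{i}\xi}\big)=\tfrac{\gamma}{2}{\rm e}^{-\mathbf{i}\xi}+\tfrac{2-\gamma}{2}=\tfrac{1}{2}\big(2-\gamma(1-{\rm e}^{-\mathbf{i}\xi})\big)$, and $(1-{\rm e}^{\mathbf{i}\xi})^{\gamma}=(1+{\rm e}^{\mathbf{i}(\xi+\pi)})^{\gamma}$ by direct substitution, giving \eqref{eq:fgamma} immediately and with no branch issues.
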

\begin{proof}
Let us observe that
$A^{\gamma}_{\tilde{N}}=[-w^{(\gamma)}_{i-j+1}]_{i,j=1}^{\tilde{N}}$,
with $w^{(\gamma)}_k=0$ for $k<0$, and let us define the function
\begin{equation}\label{eq:f_gamma}
f_{\gamma}(\xi)=-\sum\limits_{k=-1}^{\infty}w^{(\gamma)}_{k+1}{\rm
e}^{\mathbf{i}k\xi}.
\end{equation}
Then, by Definition \ref{def:toep}, it holds that $A^\gamma_{\tilde{N}}=T_{\tilde{N}}(f_\gamma)$. When  $\gamma\in(1,2)$, it is easy to
see that $f_{\gamma}(\xi)$ lies in the Wiener class. In detail, from
Proposition \ref{ww} we know that
$w_1^{(\gamma)}=\frac{2-\gamma-\gamma^2}{2}<0$,
$w_2^{(\gamma)}=\frac{\gamma(\gamma^2+\gamma-4)}{4}$,
$w_k^{(\gamma)}>0$ for $k\geq 0$ and $k\neq 1,2$,  and
$w_k^{(\gamma)}=0$ for $k<0$. Then
\begin{eqnarray*}
\sum\limits_{k=-1}^{\infty}|w^{(\gamma)}_{k+1}|=\sum\limits_{\substack{k=-1\\k\neq
0,1}}^{\infty}w^{(\gamma)}_{k+1}+|w^{(\gamma)}_{1}|+|w^{(\gamma)}_{2}|=\sum\limits_{\substack{k=-1\\k\neq
0,1}}^{\infty}w^{(\gamma)}_{k+1}+\frac{\gamma^2+\gamma-2}{2}+\frac{|\gamma(\gamma^2+\gamma-4)|}{4}.
\end{eqnarray*}
Again from Proposition \ref{ww}, we deduce that
\begin{eqnarray*}
\sum\limits_{k=0}^{\infty}w^{(\gamma)}_{k}=0\Leftrightarrow\sum\limits_{\substack{k=-1\\k\neq
0,1}}^{\infty}w^{(\gamma)}_{k+1}=-w^{(\gamma)}_{1}-w^{(\gamma)}_{2}=-\frac{2-\gamma-\gamma^2}{2}-\frac{\gamma(\gamma^2+\gamma-4)}{4}
=\frac{\gamma^2+\gamma-2}{2}-\frac{\gamma(\gamma^2+\gamma-4)}{4},
\end{eqnarray*}
that is
\begin{eqnarray*}
\sum\limits_{k=-1}^{\infty}|w^{(\gamma)}_{k+1}|=\gamma^2+\gamma-2+\frac{\gamma}{4}\bigg[|\gamma^2+\gamma-4|-(\gamma^2+\gamma-4)\bigg].
\end{eqnarray*}
The righthand side of the previous relation is a positive constant for $\gamma\in(1,2)$, then
we can conclude that $f_{\gamma}(\xi)$ belongs to the Wiener class. An explicit formula for the symbol
$f_{\gamma}(\xi)$ can be obtained combining the definition of $w^{(\gamma)}_{k}$ given in (\ref{w}) with the one of $g_k^{(\alpha)}$ in \eqref{eq:gk} as follows
\begin{eqnarray*}
f_{\gamma}(\xi)&=&-\sum\limits_{k=0}^{\infty}w^{(\gamma)}_{k}{\rm
e}^{\mathbf{i}(k-1)\xi}=-\frac{\gamma}{2}g^{(\gamma)}_{0}{\rm
e}^{-\mathbf{i}\xi}-\sum\limits_{k=1}^{\infty}\bigg(\frac{\gamma}{2}g^{(\gamma)}_{k}+\frac{2-\gamma}{2}g^{(\gamma)}_{k-1}\bigg){\rm
e}^{\mathbf{i}(k-1)\xi}\\
&=&-\frac{\gamma}{2}\sum\limits_{k=0}^{\infty}(-1)^k
\binom{\gamma}{k}{\rm
e}^{\mathbf{i}(k-1)\xi}-\frac{2-\gamma}{2}\sum\limits_{k=0}^{\infty}(-1)^k
\binom{\gamma}{k}{\rm e}^{\mathbf{i}k\xi}=-\frac{\gamma}{2}{\rm
e}^{-\mathbf{i}\xi}\sum\limits_{k=0}^{\infty}\binom{\gamma}{k}{\rm
e}^{\mathbf{i}k(\xi+\pi)}-\frac{2-\gamma}{2}\sum\limits_{k=0}^{\infty}
\binom{\gamma}{k}{\rm e}^{\mathbf{i}k(\xi+\pi)}.
\end{eqnarray*}
Applying the well-known binomial series
\begin{eqnarray*}
(1+z)^{\gamma}=\sum\limits_{k=0}^{\infty}
\binom{\gamma}{k}z^k,~~~~~z\in\mathbb{C},~~~|z|\leq 1,~~~\gamma>0,
\end{eqnarray*}
with $z={\rm e}^{\mathbf{i}(\xi+\pi)}$, the thesis follows.
\end{proof}
\begin{cor}
Let $\alpha,\beta\in(1,2)$ and let us assume that
$d_+(x,y,t)=d^+>0,~d_-(x,y,t)=d^->0,~e_+(x,y,t)=e^+>0,~e_-(x,y,t)=e^->0$.
Then the matrix
$\mathbf{\mathcal{M}}^{(m)}_{(\alpha,\beta),N}$
defined as in \eqref{eq:M} is the BTTB matrix
$\mathbf{\mathcal{M}}^{(m)}_{(\alpha,\beta),N}=T_N^{(2)}(\varphi_{(\alpha,\beta)})$
with
\begin{equation}\label{eq:phi}
\varphi_{(\alpha,\beta)}(\theta_1,\theta_2)=
\frac{1}{r}+d^+f_{\alpha}(\theta_1)+d^-f_{\alpha}(-\theta_1)+\frac{s}{r}e^+f_{\beta}(\theta_2)+\frac{s}{r}e^-f_{\beta}(-\theta_2),
\end{equation}
where $r=\frac{\triangle t}{2h_x^{\alpha}}$ and $s=\frac{\triangle t}{2h_y^{\beta}}$.
\end{cor}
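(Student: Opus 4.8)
The plan is to build the BTTB symbol for $\mathbf{\mathcal{M}}^{(m)}_{(\alpha,\beta),N}$ by combining Proposition~\ref{pro2} with the known algebraic rules for how Toeplitz/BTTB sequences transform under transposition and Kronecker products, together with the (trivially) linear dependence of the symbol on the matrix. Concretely, I would first recall from the proof of Proposition~\ref{pro2} that $A^{\gamma}_{\tilde{N}}=T_{\tilde{N}}(f_\gamma)$, so that its transpose satisfies $(A^{\gamma}_{\tilde{N}})^T=T_{\tilde{N}}(\overline{f_\gamma})=T_{\tilde{N}}(\tilde{f}_\gamma)$ where $\tilde{f}_\gamma(\xi):=f_\gamma(-\xi)$; since $A^{\gamma}_{\tilde{N}}$ is real, $\overline{f_\gamma(\xi)}=f_\gamma(-\xi)$ corresponds exactly to swapping the $(i,j)$ and $(j,i)$ entries. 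Then, using the standard identity $T_{n_2}(g)\otimes T_{n_1}(h)=T^{(2)}_N(g\otimes h)$ with $(g\otimes h)(\theta_1,\theta_2)=g(\theta_1)h(\theta_2)$ (and the special cases where one factor is an identity $I_{n_i}=T_{n_i}(1)$), each of the four Kronecker-product blocks appearing in \eqref{AxAy} becomes a BTTB matrix with an explicit symbol: $I_{n_2}\otimes A^{\alpha}_{n_1}=T^{(2)}_N(f_\alpha(\theta_1))$, $I_{n_2}\otimes (A^{\alpha}_{n_1})^T=T^{(2)}_N(f_\alpha(-\theta_1))$, $A^{\beta}_{n_2}\otimes I_{n_1}=T^{(2)}_N(f_\beta(\theta_2))$, and $(A^{\beta}_{n_2})^T\otimes I_{n_1}=T^{(2)}_N(f_\beta(-\theta_2))$.

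Next I would invoke the constant-coefficient hypothesis: under $d_\pm(x,y,t)=d^\pm$, $e_\pm(x,y,t)=e^\pm$, the diagonal matrices $D^{(m)}_\pm$, $E^{(m)}_\pm$ reduce to scalar multiples of $I_N$, so $A^{(m)}_x=d^+ T^{(2)}_N(f_\alpha(\theta_1))+d^- T^{(2)}_N(f_\alpha(-\theta_1))$ and similarly for $A^{(m)}_y$. Using linearity of the map $f\mapsto T^{(2)}_N(f)$ (which follows immediately from linearity of the Fourier coefficients in Definition~\ref{def:toep}), together with $\tfrac1r I_N=\tfrac1r T^{(2)}_N(1)$, I would assemble $\mathbf{\mathcal{M}}^{(m)}_{(\alpha,\beta),N}=\tfrac1r I_N+A^{(m)}_x+\tfrac{s}{r}A^{(m)}_y=T^{(2)}_N(\varphi_{(\alpha,\beta)})$ with $\varphi_{(\alpha,\beta)}$ exactly as in \eqref{eq:phi}. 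Membership of $\varphi_{(\alpha,\beta)}$ in the Wiener class (hence in $L^1([-\pi,\pi]^2)$, so that Definition~\ref{def:toep} applies) is inherited from Proposition~\ref{pro2}, since a finite sum of shifts/reflections of Wiener-class functions of one variable, viewed as functions of two variables, is again Wiener class, with absolutely summable Fourier coefficients.

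I do not anticipate a genuine obstacle here — the corollary is essentially a bookkeeping consequence of Proposition~\ref{pro2} — but the one point requiring minor care is getting the orientation of the transpose and of the Kronecker factors right so that the arguments land as $f_\gamma(\pm\theta_1)$ versus $f_\gamma(\pm\theta_2)$ consistently with the block-ordering convention fixed by the definition of $\mathbf{u}^{(m)}$ (the $x$-index runs fastest, matching $A^{\alpha}_{n_1}$ sitting in the rightmost Kronecker slot and the two-level symbol convention $T_{n_2}(\cdot)\otimes T_{n_1}(\cdot)\leftrightarrow (\theta_1,\theta_2)$). I would therefore state the Toeplitz algebra facts (linearity, $T(f)^T=T(\tilde f)$, $T(g)\otimes T(h)=T^{(2)}(g\otimes h)$) explicitly as the workhorse, verify the index orientation once on one term, and let the remaining three terms follow by the same template; the final identification of the closed form then just copies the right-hand side of \eqref{eq:fgamma} into each slot.
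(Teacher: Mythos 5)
Your proposal is correct and follows essentially the same route as the paper: identify $A^{\gamma}_{\tilde{N}}=T_{\tilde{N}}(f_{\gamma})$, apply the transpose rule $T^{(d)}_{N}(f)^T=T^{(d)}_{N}(f(-\cdot))$ together with the Kronecker-product rule for two-level symbols to the four blocks of \eqref{AxAy}, and assemble by linearity after noting that constant diffusion coefficients turn $D^{(m)}_{\pm}, E^{(m)}_{\pm}$ into scalar multiples of $I_N$. You merely make explicit the Toeplitz-algebra facts (linearity, $T(f)^T=T(\tilde f)$, $T(g)\otimes T(h)=T^{(2)}(g\otimes h)$) and the Wiener-class bookkeeping that the paper invokes tacitly.
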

\begin{proof}
According to Definition \ref{def:toep} and thanks to Proposition \ref{pro2}, the BTTB terms of $A_x^{(m)}$ and $A_y^{(m)}$ in equation \eqref{AxAy} can be written as
\begin{align*}
I_{n_2}\otimes A^{\alpha}_{n_1}& = T_{n_2}(1)\otimes T_{n_1}(f_{\alpha})  =  T_{N}^{(2)}(\tilde{f}_{\alpha}), \\
A^{\beta}_{n_2} \otimes I_{n_1}& = T_{n_2}(f_{\beta}) \otimes T_{n_1}(1) = T_{N}^{(2)}(\tilde{f}_{\beta}).
\end{align*}
where
$ \tilde{f}_{\alpha}(\theta_1,\theta_2) =  f_{\alpha}(\theta_1)$ and
$\tilde{f}_{\beta}(\theta_1,\theta_2)=f_{\beta}(\theta_2)$.

Finally, recalling that for a real function $f$ it holds $T_N^{(d)}(f(\boldsymbol\theta))^T=T_N^{(d)}(f(-\boldsymbol\theta))$ and replacing equations \eqref{AxAy} in \eqref{eq:M}, we obtain $\mathbf{\mathcal{M}}^{(m)}_{(\alpha,\beta),N}=T_N^{(2)}(\varphi_{(\alpha,\beta)})$ with $\varphi_{(\alpha,\beta)}$ defined as in \eqref{eq:phi}.
\end{proof}

Now we focus our attention on the spectral distribution of
$\{\mathbf{\mathcal{M}}^{(m)}_{(\alpha,\beta),N}\}_{_{N\in
\mathbb{N}}}$, under the further
assumption that the diffusion coefficients are equal on both sides. By this
hypothesis, $\{\mathbf{\mathcal{M}}^{(m)}_{(\alpha,\beta),N}\}_{_{N\in
\mathbb{N}}}$ is a
sequence of symmetric BTTB matrices. Let us start with the
definition of the spectral distribution in the sense of the
eigenvalues and of the singular values.
\begin{definition}
Let $f:G\to\mathbb{C}$ be a measurable function, defined on a
measurable set $G\subset\mathbb R^k$ with $k\ge 1$,
$0<m_k(G)<\infty$. Let  $\mathcal C_0(\mathbb K)$ be the set of
continuous functions with compact support over $\mathbb K\in
\{\mathbb C, \mathbb R_0^+\}$ and let $\{A_N\}$ be a sequence of
matrices of size $N$ with eigenvalues $\lambda_j(A_N)$,
$j=1,\ldots,N$ and singular values $\sigma_j(A_N)$, $j=1,\ldots,N$.
\begin{itemize}
    \item $\{A_N\}$ is {\em distributed as the pair $(f,G)$ in the sense of the eigenvalues,} in symbols $\{A_N\}\sim_\lambda(f,G)$, if the following limit relation holds for all $F\in\mathcal C_0(\mathbb C)$
\begin{align*}
  \lim_{N\to\infty}\frac{1}{N}\sum_{j=1}^{N}F(\lambda_j(A_N))=
  \frac1{m_k(G)}\int_G F(f(t)) dt.
\end{align*}
    \item $\{A_N\}$ is {\em distributed as the pair $(f,G)$ in the sense of the singular values,} in symbols $\{A_N\}\sim_\sigma(f,G)$, if the following limit relation holds for all $F\in\mathcal C_0(\mathbb R_0^+)$
\begin{align*}
  \lim_{N\to\infty}\frac{1}{N}\sum_{j=1}^{N}F(\sigma_j(A_N))=
  \frac1{m_k(G)}\int_G F(|f(t)|) dt.
\end{align*}
\end{itemize}
\end{definition}

For Hermitian $d$-level Toeplitz matrix-sequences, the following theorem due to
Szeg\"{o} and Tilli holds (see \cite{14,Tilli}).
\begin{theorem}\label{theorem1}
Let $f\in L^1([-\pi,\pi]^d)$ be a real-valued function, then
\begin{eqnarray*}
\left\{T^{(d)}_N(f)\right\}_{_{N\in\mathbb{N}}}\sim_{\lambda}\bigg(f,[-\pi,\pi]^d\bigg).
\end{eqnarray*}
\end{theorem}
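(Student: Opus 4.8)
The plan is to prove this classical Szeg\H{o}--Tilli result in the usual two stages: first for trigonometric polynomials by the method of moments, then for general $f\in L^1$ by an $L^1$-density argument. As a preliminary reduction I would note that, since $f$ is real-valued and the matrices are Hermitian (so all eigenvalues are real), it is enough to verify the limit relation for $F$ in a set that is dense in $\mathcal C_0(\mathbb R)$ for the uniform norm --- say the compactly supported Lipschitz functions --- because replacing $F$ by a uniformly close $F'$ perturbs both $\frac1N\sum_{j=1}^N F(\lambda_j(T^{(d)}_N(f)))$ and $\frac1{(2\pi)^d}\int_{[-\pi,\pi]^d}F(f(\boldsymbol\theta))\,d\boldsymbol\theta$ by at most $\|F-F'\|_\infty$.

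Stage 1: assume $f$ is a real $d$-variate trigonometric polynomial, so that $T^{(d)}_N(f)$ is Hermitian and banded with bandwidth independent of $N$, and all of its eigenvalues lie in the fixed compact interval $[\min f,\max f]$. I would first establish the moment convergence
$$\frac1N\operatorname{tr}\!\big(T^{(d)}_N(f)^k\big)\xrightarrow[N\to\infty]{}\frac1{(2\pi)^d}\int_{[-\pi,\pi]^d}f(\boldsymbol\theta)^k\,d\boldsymbol\theta,\qquad k=1,2,\ldots,$$
by expanding $\operatorname{tr}(T^{(d)}_N(f)^k)$ as a sum of products of $k$ Fourier coefficients of $f$ over closed length-$k$ walks on the index grid $\{1,\dots,n_1\}\times\cdots\times\{1,\dots,n_d\}$: extending the index sums freely to all of $\mathbb Z^d$ collapses the convolution to exactly $N(f^k)_{\mathbf 0}=\frac{N}{(2\pi)^d}\int f^k$, while the error of this extension is supported on walks whose base point lies within a bounded distance of the boundary of the grid, hence involves only $O(N^{d-1})=o(N)$ base points, each contributing $O(1)$. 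Since $f$ is bounded, the empirical spectral measures of $T^{(d)}_N(f)$ and the image measure $\mu_f$ (push-forward of normalized Lebesgue measure on $[-\pi,\pi]^d$ under $f$) all share one compact support, so convergence of every moment forces weak convergence to $\mu_f$, which is exactly the asserted relation for all $F\in\mathcal C_0(\mathbb R)$.

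Stage 2: for general real $f\in L^1([-\pi,\pi]^d)$ I would pick real $d$-variate trigonometric polynomials $p_m$ (for instance the $d$-fold Fej\'er means of $f$) with $\|f-p_m\|_{L^1}\to 0$, write $T^{(d)}_N(f)=T^{(d)}_N(p_m)+T^{(d)}_N(f-p_m)$ with both summands Hermitian, and invoke the trace-norm estimate
$$\big\|T^{(d)}_N(g)\big\|_1\le\frac{N}{(2\pi)^d}\|g\|_{L^1},$$
which follows from the representation $T^{(d)}_N(g)=\frac1{(2\pi)^d}\int_{[-\pi,\pi]^d}g(\boldsymbol\theta)\,w_N(\boldsymbol\theta)w_N(\boldsymbol\theta)^{*}\,d\boldsymbol\theta$, where $w_N(\boldsymbol\theta)$ is the vector of grid exponentials, together with $\|w_N(\boldsymbol\theta)w_N(\boldsymbol\theta)^{*}\|_1=\|w_N(\boldsymbol\theta)\|_2^2=N$. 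Then, by Mirsky's theorem, for any Lipschitz $F$,
$$\Big|\tfrac1N\sum_{j=1}^N F(\lambda_j(T^{(d)}_N(f)))-\tfrac1N\sum_{j=1}^N F(\lambda_j(T^{(d)}_N(p_m)))\Big|\le\frac{\operatorname{Lip}(F)}{N}\big\|T^{(d)}_N(f-p_m)\big\|_1\le\frac{\operatorname{Lip}(F)}{(2\pi)^d}\|f-p_m\|_{L^1};$$
letting $N\to\infty$ (using Stage 1 for $p_m$) and then $m\to\infty$ along a subsequence with $p_m\to f$ almost everywhere (so that $F(p_m)\to F(f)$ in $L^1$ by dominated convergence) yields $\frac1N\sum_{j=1}^N F(\lambda_j(T^{(d)}_N(f)))\to\frac1{(2\pi)^d}\int_{[-\pi,\pi]^d}F(f(\boldsymbol\theta))\,d\boldsymbol\theta$, i.e.\ $\{T^{(d)}_N(f)\}_{N}\sim_\lambda(f,[-\pi,\pi]^d)$.

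The main obstacle is Stage 1 --- making the multidimensional boundary-walk count rigorous, equivalently proving $\operatorname{tr}(T^{(d)}_N(f)^k)=\operatorname{tr}(T^{(d)}_N(f^k))+o(N)$ for each fixed $k$; the trace-norm bound, Mirsky's inequality, and the density reduction used in Stage 2 are all routine. Alternatively, one may simply invoke \cite{14,Tilli}, where the theorem is established in full generality.
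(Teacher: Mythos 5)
The paper does not prove Theorem~\ref{theorem1}; it simply invokes it as a classical result, citing Grenander--Szeg\H{o} \cite{14} for $d=1$ and Tilli \cite{Tilli} for the multilevel case. Your proposal supplies a self-contained proof, and the overall architecture (moments for trigonometric polynomials, then an $L^1$-density argument via the trace-norm estimate and Mirsky's inequality) is the standard and correct one; it also matches the approach in the cited references. Stage~2 is entirely sound: the rank-one integral representation $T^{(d)}_N(g)=\frac{1}{(2\pi)^d}\int g\,w_Nw_N^{*}$, the resulting bound $\|T^{(d)}_N(g)\|_1\le\frac{N}{(2\pi)^d}\|g\|_{L^1}$, Mirsky's perturbation inequality, and the reduction to Lipschitz test functions with compact support are all used correctly.

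One slip worth fixing is the order estimate for the boundary contribution in Stage~1. With the paper's convention $N=\prod_{\ell=1}^d n_\ell$, the set of base points within a fixed distance $Ck$ of the boundary of $\{1,\dots,n_1\}\times\cdots\times\{1,\dots,n_d\}$ has cardinality $\mathcal O\bigl(\sum_{\ell=1}^d \prod_{j\ne\ell} n_j\bigr)=\mathcal O\bigl(N\cdot\max_\ell n_\ell^{-1}\bigr)$, which is $o(N)$ provided $\min_\ell n_\ell\to\infty$. Writing this as ``$O(N^{d-1})$'' is literally false for $d\ge 2$ (that quantity dominates $N$); you presumably meant $O(n^{d-1})$ under the shorthand $n_1=\cdots=n_d=n$, $N=n^d$, but the statement should be made in a way consistent with the general partial-orders $n_\ell$, and the hypothesis $\min_\ell n_\ell\to\infty$ on the manner in which $N\to\infty$ should be made explicit, since it is precisely what forces the boundary fraction to vanish. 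With that correction the Stage~1 count is rigorous: for a trigonometric polynomial the steps of the closed walk are bounded, so any walk that escapes the grid has its base point within a bounded distance of the boundary, each such base point contributes $O(1)$ to the error, and the compactly supported limit measure makes moment convergence equivalent to the asserted weak convergence.
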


Now, we recall a property of the spectral norm of $d$-level Toeplitz matrices and we state a relevant theorem contained in \cite{13}. Given a square matrix $A$ of order $N$, we denote its spectral norm by $\|A\|=\sigma_{1}(A)$ and its trace norm by $\|A\|_1=\sum\limits_{i=1}^{N} \sigma_i(A)$. For the spectral norm of a $d$-level Toeplitz sequence $\{T^{(d)}_N(f)\}_{_{N\in \mathbb{N}}}$ generated by $f$ it holds (see Corollary 3.5 in \cite{23})
\begin{eqnarray}\label{10}
f\in L^{\infty}(-\pi,\pi]^d\Rightarrow \|T^{(d)}_N(f)\|\leq
\|f\|_{\infty},~~~~~\forall N\in \mathbb{N}.
\end{eqnarray}

%

\begin{theorem}\label{theorem2}
(Theorem 3.4 in \cite{13}) Let $\{A_N\}_{_{N\in \mathbb{N}}}$ be
a matrix-sequence with $A_N = B_N+C_N$ and $B_N$ Hermitian $\forall
N\in \mathbb{N}$. Assume that
\begin{itemize}
    \item $\{B_N\}_{_{N\in
\mathbb{N}}}\sim_{\lambda} (f,G)$,
    \item $\|B_N\|,~\|C_N\|$ are bounded by a constant
independent of $N$,
    \item $\|C_N\|_1=o(N)$.
\end{itemize}
Then $\{A_N\}_{_{N\in \mathbb{N}}}\sim_{\lambda} (f,G)$.
\end{theorem}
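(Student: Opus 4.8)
The plan is to verify directly the limit relation defining $\sim_{\lambda}$: for every $F\in\mathcal C_0(\mathbb C)$,
\[
\frac1N\sum_{j=1}^N F(\lambda_j(A_N))\ \longrightarrow\ \frac1{m_k(G)}\int_G F(f(t))\,dt .
\]
Since $A_N=B_N+C_N$ need not be Hermitian, its eigenvalues are genuinely complex and there is no Mirsky-type bound on a rearrangement distance between the spectra of $A_N$ and $B_N$ in terms of $\|C_N\|_1$; if $C_N$ were Hermitian one could use exactly such an inequality together with the density of Lipschitz functions and the proof would be short. Instead I would combine two independent pieces of information: (i) the \emph{holomorphic moments} of the eigenvalues of $A_N$ and of $B_N$ agree asymptotically, since they are computed by traces; and (ii) the eigenvalues of $A_N$ concentrate near the real axis, which is where the limiting distribution — the push-forward of the real-valued $f$ — is supported. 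A preliminary remark: $\|A_N\|\le\|B_N\|+\|C_N\|\le M$ for an $N$-independent constant $M$, so every spectrum lies in $\{|z|\le M\}$; moreover $\{B_N\}\sim_\lambda(f,G)$ together with $\sup_N\|B_N\|<\infty$ forces $\mathrm{ess}\,\sup_G|f|\le M$, a standard fact for Hermitian distributed sequences.

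\textbf{Step (i).} Fix a polynomial $P(z)=\sum_{k=0}^d c_kz^k$. In $\mathrm{tr}\,P(A_N)-\mathrm{tr}\,P(B_N)$ every term not cancelling against $\mathrm{tr}\,B_N^k$ is the trace of a product of $k$ matrices, each $B_N$ or $C_N$, with at least one factor $C_N$; using cyclicity of the trace and $|\mathrm{tr}(C_NX)|\le\|C_N\|_1\|X\|$ together with $\|B_N\|,\|C_N\|\le M$, each such term is $O(M^{k-1}\|C_N\|_1)$, whence $|\mathrm{tr}\,P(A_N)-\mathrm{tr}\,P(B_N)|\le c_P\|C_N\|_1=o(N)$ with $c_P$ depending only on $P$ and $M$. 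Since $\mathrm{tr}\,P(\cdot)$ equals the sum of $P$ over the eigenvalues for any square matrix, and since applying $\{B_N\}\sim_\lambda(f,G)$ to a compactly supported function coinciding with $P$ on $[-M,M]$ gives $\frac1N\sum_jP(\lambda_j(B_N))\to\frac1{m_k(G)}\int_GP(f)$, I conclude
\[
\frac1N\sum_{j=1}^N P(\lambda_j(A_N))\ \longrightarrow\ \frac1{m_k(G)}\int_G P(f(t))\,dt\qquad\text{for every polynomial }P.
\]

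\textbf{Step (ii).} The matrix $\mathrm{Im}(A_N)=\tfrac1{2\mathbf i}(A_N-A_N^*)=\tfrac1{2\mathbf i}(C_N-C_N^*)$ is Hermitian with $\|\mathrm{Im}(A_N)\|\le\|C_N\|\le M$ and $\|\mathrm{Im}(A_N)\|_1\le\|C_N\|_1=o(N)$, so its Frobenius norm satisfies $\|\mathrm{Im}(A_N)\|_F^2\le\|\mathrm{Im}(A_N)\|\,\|\mathrm{Im}(A_N)\|_1=o(N)$. In a Schur triangularization of $A_N$ the diagonal of the triangular factor lists the eigenvalues $\lambda_j(A_N)$, so $\big(\mathrm{Im}\,\lambda_j(A_N)\big)_j$ is the diagonal of a unitary conjugate of $\mathrm{Im}(A_N)$; hence $\sum_j\big(\mathrm{Im}\,\lambda_j(A_N)\big)^2\le\|\mathrm{Im}(A_N)\|_F^2=o(N)$, and for every $\delta>0$ the number of indices $j$ with $|\mathrm{Im}\,\lambda_j(A_N)|>\delta$ is $o(N)$.

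\textbf{Step (iii): assembly.} Let $F\in\mathcal C_0(\mathbb C)$ with modulus of continuity $\omega_F$. Split $F(\lambda_j)=F(\mathrm{Re}\,\lambda_j)+[F(\lambda_j)-F(\mathrm{Re}\,\lambda_j)]$; by (ii) the average over $j$ of the bracketed term is at most $\omega_F(\delta)+o(1)$. Approximate the restriction of $F$ to $[-M,M]\subset\mathbb R$ uniformly within $\eta$ by a polynomial $P_\eta$ (Weierstrass), so the averages of $F(\mathrm{Re}\,\lambda_j)$ and of $P_\eta(\mathrm{Re}\,\lambda_j)$ differ by at most $\eta$; then, again by (ii) and using the Lipschitz constant $L_\eta$ of $P_\eta$ on $\{|z|\le M\}$, the averages of $P_\eta(\mathrm{Re}\,\lambda_j)$ and of $P_\eta(\lambda_j)$ differ by at most $L_\eta\delta+o(1)$. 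Combining these with step (i) applied to $P=P_\eta$ and with $\big|\int_GP_\eta(f)-\int_GF(f)\big|\le\eta\,m_k(G)$ yields
\[
\limsup_{N\to\infty}\Big|\tfrac1N\textstyle\sum_jF(\lambda_j(A_N))-\tfrac1{m_k(G)}\int_GF(f)\Big|\ \le\ \omega_F(\delta)+L_\eta\delta+2\eta ;
\]
letting $\delta\to0$ and then $\eta\to0$ gives the claim. The main obstacle is precisely the interaction of (i) and (ii): matching all holomorphic moments does not by itself identify a compactly supported measure on $\mathbb C$, so the real content is the proof, via Schur's inequality and $\|C_N\|_1=o(N)$, that the spectrum of $A_N$ hugs the real axis tightly enough for the real-variable Weierstrass approximation in step (iii) to be legitimate.
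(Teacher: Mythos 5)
The paper does not contain its own proof of this statement: it is imported as ``Theorem 3.4 in \cite{13}'' and used as a black box, so there is no in-paper argument against which to compare yours. Taken on its own merits, your proof is correct. In step (i), the trace-duality bound $|\mathrm{tr}(C_N X)|\le\|C_N\|_1\|X\|$ together with the uniform bound $\|B_N\|,\|C_N\|\le M$ does give $\mathrm{tr}\,P(A_N)-\mathrm{tr}\,P(B_N)=o(N)$ for each fixed polynomial, and the passage from a $\mathcal C_0$ test function to an untruncated polynomial on $[-M,M]$ is legitimate because a Hermitian distributed sequence with $\sup_N\|B_N\|\le M$ forces $f$ to be real-valued a.e.\ with $\mathrm{ess}\,\sup_G|f|\le M$. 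In step (ii), $\bigl(\mathrm{Im}\,\lambda_j(A_N)\bigr)_j$ really is the diagonal of a unitary conjugate of $\mathrm{Im}(A_N)$ in the Schur form, and the chain $\sum_j(\mathrm{Im}\,\lambda_j)^2\le\|\mathrm{Im}(A_N)\|_F^2\le\|\mathrm{Im}(A_N)\|\,\|\mathrm{Im}(A_N)\|_1=o(N)$ is correct. In step (iii) the order of limits, $\delta\to0$ at fixed $\eta$ and only then $\eta\to0$, is taken correctly, so the blow-up of the Lipschitz constant $L_\eta$ of the Weierstrass polynomial causes no trouble. Two places worth spelling out if this is to be fully self-contained: the ``standard fact'' in your preliminary remark does require a short argument (test against a bump function supported off the disc $\{|z|\le M\}$ to rule out $|f|>M$ on a set of positive measure, and against one supported off the real axis to rule out $\mathrm{Im}\,f\neq0$), and the Weierstrass approximation should be applied separately to the real and imaginary parts of $F$ when $F$ is complex-valued. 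Neither is a genuine gap; the argument stands.
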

The following proposition concerns the eigenvalue distribution
of the coefficient matrix-sequence
$\{\mathbf{\mathcal{M}}^{(m)}_{(\alpha,\beta),N}\}_{_{N\in
\mathbb{N}}}$ when the diffusion coefficients are constant and equal.
\begin{pro}\label{PaPb}
Let us assume that $d_{\pm}(x,y,t)=d>0,~e_{\pm}(x,y,t)=e>0$, that
$\frac{1}{r}=o(1)$, and
$\frac{s}{r}=\frac{h_x^{\alpha}}{h_y^{\beta}}=\mathcal{O}(1)$.
Let $f_{\gamma}$ be defined as in \eqref{eq:fgamma} and define
\[
q_{\gamma}(\xi)=f_{\gamma}(\xi)+f_{\gamma}(-\xi)=f_{\gamma}(\xi)+\overline{f_{\gamma}(\xi)}.
\]
Given the matrix-sequence
$\{\mathbf{\mathcal{M}}^{(m)}_{(\alpha,\beta),N}\}_{_{N\in
\mathbb{N}}}$, we have
\begin{eqnarray*}
\left\{\mathbf{\mathcal{M}}^{(m)}_{(\alpha,\beta),N}\right\}_{_{N\in
\mathbb{N}}}\sim_{\lambda}\left(\mathbf{\mathcal{F}}_{(\alpha,\beta)}(\theta_1,\theta_2),[-\pi,\pi]^2\right),
\end{eqnarray*}
where
\begin{equation}\label{P}
\mathbf{\mathcal{F}}_{(\alpha,\beta)}(\theta_1,\theta_2)=d\, q_{\alpha}(\theta_1)+e\frac{s}{r}\, q_{\beta}(\theta_2),
\end{equation}
is a real-valued continuous function and it is nonnegative for $\alpha,\beta \in (1,2)$.
\end{pro}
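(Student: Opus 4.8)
The plan is to decompose $\mathbf{\mathcal{M}}^{(m)}_{(\alpha,\beta),N}$ into a Hermitian part whose symbol is known plus a remainder of small trace norm, and then invoke Theorem~\ref{theorem2}. First I would write $\mathbf{\mathcal{M}}^{(m)}_{(\alpha,\beta),N}=\frac{1}{r}I_N+A_x^{(m)}+\frac{s}{r}A_y^{(m)}$ and, using the assumption $d_\pm=d$, $e_\pm=e$, recognize that $A_x^{(m)}=d\bigl(I_{n_2}\otimes(A_{n_1}^{\alpha}+(A_{n_1}^{\alpha})^T)\bigr)$ and similarly $A_y^{(m)}=e\bigl((A_{n_2}^{\beta}+(A_{n_2}^{\beta})^T)\otimes I_{n_1}\bigr)$, so that these two terms are already Hermitian. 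By Proposition~\ref{pro2} and the identity $T_N^{(d)}(f)^T=T_N^{(d)}(f(-\cdot))$ (already recalled in the proof of the Corollary), $A_{n_1}^{\alpha}+(A_{n_1}^{\alpha})^T=T_{n_1}(q_\alpha)$ with $q_\alpha(\xi)=f_\alpha(\xi)+f_\alpha(-\xi)=2\,\mathrm{Re}\,f_\alpha(\xi)$, hence $A_x^{(m)}=d\,T_N^{(2)}(q_\alpha(\theta_1))$ and $A_y^{(m)}=e\,T_N^{(2)}(q_\beta(\theta_2))$; combining, $A_x^{(m)}+\frac{s}{r}A_y^{(m)}=T_N^{(2)}(\mathbf{\mathcal{F}}_{(\alpha,\beta)})$ with $\mathbf{\mathcal{F}}_{(\alpha,\beta)}$ as in \eqref{P}. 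Since $q_\alpha$, $q_\beta$ are real-valued (being twice the real part of a Wiener-class function) and continuous, $\mathbf{\mathcal{F}}_{(\alpha,\beta)}\in L^1([-\pi,\pi]^2)$ is real and continuous, so Theorem~\ref{theorem1} gives $\{T_N^{(2)}(\mathbf{\mathcal{F}}_{(\alpha,\beta)})\}_N\sim_\lambda(\mathbf{\mathcal{F}}_{(\alpha,\beta)},[-\pi,\pi]^2)$. This is the matrix $B_N$ in Theorem~\ref{theorem2}.

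Next I would set $C_N=\frac{1}{r}I_N$, which is Hermitian too but, crucially, has trace norm $\|C_N\|_1=\frac{N}{r}=o(N)$ by the hypothesis $\frac1r=o(1)$, and spectral norm $\|C_N\|=\frac1r$ bounded (in fact $o(1)$). For the boundedness of $\|B_N\|$ I would use \eqref{10}: $q_\alpha,q_\beta\in L^\infty$ because they are continuous on the torus, and $\frac{s}{r}=\mathcal{O}(1)$, so $\|B_N\|\le d\|q_\alpha\|_\infty+e\frac{s}{r}\|q_\beta\|_\infty$ is bounded independently of $N$. Thus all three hypotheses of Theorem~\ref{theorem2} are met, and we conclude $\{\mathbf{\mathcal{M}}^{(m)}_{(\alpha,\beta),N}\}_N\sim_\lambda(\mathbf{\mathcal{F}}_{(\alpha,\beta)},[-\pi,\pi]^2)$. (Alternatively, since $B_N$ and $C_N$ are both Hermitian here, one could even argue directly that $\{B_N+C_N\}\sim_\lambda(\mathbf{\mathcal{F}}_{(\alpha,\beta)},[-\pi,\pi]^2)$ by a small-trace-norm perturbation argument, but citing Theorem~\ref{theorem2} is cleanest.)

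It remains to verify that $\mathbf{\mathcal{F}}_{(\alpha,\beta)}$ is real-valued, continuous, and nonnegative on $[-\pi,\pi]^2$. Real-valuedness and continuity follow from the explicit closed form: using \eqref{eq:fgamma}, $q_\gamma(\xi)=2\,\mathrm{Re}\,f_\gamma(\xi)=-2\,\mathrm{Re}\!\left[\frac{2-\gamma(1-\mathrm{e}^{-\mathbf{i}\xi})}{2}\bigl(1+\mathrm{e}^{\mathbf{i}(\xi+\pi)}\bigr)^{\gamma}\right]$, which is a continuous real function of $\xi$. For nonnegativity I would compute $(1+\mathrm{e}^{\mathbf{i}(\xi+\pi)})^{\gamma}=(1-\mathrm{e}^{\mathbf{i}\xi})^{\gamma}$ in polar form: writing $1-\mathrm{e}^{\mathbf{i}\xi}=2\sin(\xi/2)\,\mathrm{e}^{\mathbf{i}(\xi-\pi)/2}$ (up to sign/branch bookkeeping), one gets $(1-\mathrm{e}^{\mathbf{i}\xi})^{\gamma}=\bigl(2|\sin(\xi/2)|\bigr)^{\gamma}\mathrm{e}^{\mathbf{i}\gamma(\xi-\pi)/2}$, and after multiplying by the prefactor and taking the real part the leading behaviour near $\xi=0$ is $(2|\sin(\xi/2)|)^{\gamma}\cos(\gamma\pi/2)\cdot(\text{positive})\ge 0$ since $\cos(\gamma\pi/2)<0$ for $\gamma\in(1,2)$ combines with the overall minus sign — this is precisely the standard computation showing that the symbol of a Riemann–Liouville/Grünwald discretization of $-\mathrm{d}^\gamma/\mathrm{d}x^\gamma$ is nonnegative. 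The main obstacle is exactly this last point: handling the complex branch of $(1+\mathrm{e}^{\mathbf{i}(\xi+\pi)})^{\gamma}$ carefully and showing that the real part of the full expression $q_\gamma(\xi)$ stays $\ge 0$ for all $\xi\in[-\pi,\pi]$ and all $\gamma\in(1,2)$ (not merely near the origin). Once $q_\alpha,q_\beta\ge 0$ is established, $\mathbf{\mathcal{F}}_{(\alpha,\beta)}=d\,q_\alpha(\theta_1)+e\frac{s}{r}q_\beta(\theta_2)\ge 0$ is immediate because $d,e>0$ and $\frac{s}{r}>0$.
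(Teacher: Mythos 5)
Your proof follows essentially the same route as the paper's: both identify $B_N=A_x^{(m)}+\frac{s}{r}A_y^{(m)}=T_N^{(2)}(\mathbf{\mathcal{F}}_{(\alpha,\beta)})$, invoke the Szeg\"o--Tilli result (Theorem~\ref{theorem1}) for its eigenvalue distribution, treat $C_N=\frac{1}{r}I_N$ as a bounded perturbation with $\|C_N\|_1=o(N)$, bound $\|B_N\|$ via \eqref{10}, and conclude with Theorem~\ref{theorem2}. On the nonnegativity of $q_\gamma$ you are no less complete than the paper, which also leaves it as a one-line assertion; your explicit flagging of the branch-of-power bookkeeping and the near-origin sketch is a fair account of what is actually glossed over in both treatments.
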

\begin{proof}
Since the diffusion coefficients $d_{\pm}(x,y,t)$ and
$e_{\pm}(x,y,t)$ are constant and equal to real positive numbers $d$
and $e$, respectively, the matrices of the sequence
$\{A^{(m)}_x+\frac{s}{r}A^{(m)}_y\}_{_{N\in \mathbb{N}}}$
(see relation (\ref{AxAy})) are symmetric. The function
\begin{eqnarray*}
&&\mathbf{\mathcal{F}}_{(\alpha,\beta)}(\theta_1,\theta_2)=d\, q_{\alpha}(\theta_1)+e\frac{s}{r}\, q_{\beta}(\theta_2),\\
&&q_{\gamma}(\xi)=f_{\gamma}(\xi)+f_{\gamma}(-\xi)=f_{\gamma}(\xi)+\overline{f_{\gamma}(\xi)},
\end{eqnarray*}
belongs to the Wiener algebra since $f_{\gamma}(\xi)$ itself is in
the same algebra (see Proposition \ref{pro2}). Furthermore, from its
expression, it also follows that $q_{\gamma}(\xi)$ is
real-valued and globally continuous. Hence,
$\mathbf{\mathcal{F}}_{(\alpha,\beta)}(\theta_1,\theta_2)$ is
real-valued and globally continuous.
Similarly, the nonnegativity of $\mathbf{\mathcal{F}}_{(\alpha,\beta)}(\theta_1,\theta_2)$  follows from the nonnegativity of
$q_{\gamma}(\xi)$ which in turn can be easily derived from the expression of $f_{\gamma}(\xi)$ in \eqref{eq:fgamma}  for $\gamma\in(1,2)$ and $\xi\in\{\theta_1,\theta_2\}$.

From  Theorem \ref{theorem1} with $d=2$ and since
$\frac{s}{r}=\frac{h_x^{\alpha}}{h_y^{\beta}}=\mathcal{O}(1)$, it
follows that $\{A^{(m)}_x+\frac{s}{r}A^{(m)}_y\}_{_{N\in
\mathbb{N}}}\sim_{\lambda}(\mathbf{\mathcal{F}}_{(\alpha,\beta)}(\theta_1,\theta_2),[-\pi,\pi]^2)$.
Furthermore, using (\ref{10}) with $d=2$, we
have that
\begin{eqnarray*}
\left\|A^{(m)}_x+\frac{s}{r}A^{(m)}_y\right\|&\leq&
C\|\mathbf{\mathcal{F}}_{(\alpha,\beta)}(\theta_1,\theta_2)\|_{\infty}\\
&\leq&
C\left[d\|q_{\alpha}(\theta_1)\|_{\infty}+e\frac{s}{r}\|q_{\beta}(\theta_2)\|_{\infty}\right]\\
&\leq& C(d(\alpha-1)2^{\alpha+1}+e\frac{s}{r}(\beta-1)2^{\beta+1}),
\end{eqnarray*}
with $C$ independent of $N$. Moreover, under the hypothesis that
$\frac{1}{r}=o(1)$, the remaining term $\frac{1}{r}I_{N}$ is such
that $\|\frac{1}{r}I_{N}\|_1 = o(N)$ and
$\|\frac{1}{r}I_{N}\|=\frac{1}{r}<\tilde{C}$ for some constant
$\tilde{C}$ independent of $N$. By Theorem \ref{theorem2}, we
conclude that the distribution of
$\{\mathbf{\mathcal{M}}^{(m)}_{(\alpha,\beta),N}\}_{_{N\in
\mathbb{N}}}$ is decided only by
$\mathbf{\mathcal{F}}_{(\alpha,\beta)}(\theta_1,\theta_2)$.
\end{proof}
Let us recall that both $q_{\alpha}(\theta_1)$ and $q_{\beta}(\theta_2)$ are nonnegative functions. Moreover, as a straightforward consequence of Proposition 4 in \cite{M2}, it is easy to prove that $q_{\alpha}(\theta_1)$ and $q_{\beta}(\theta_2)$ have a zero at $0$ of order $\alpha$ and $\beta$, respectively. With this result in mind, in the following proposition we prove that the superior limit of $\frac{\mathbf{\mathcal{F}}_{(\alpha,\beta)}(\theta_1,\theta_2)}{|\boldsymbol\theta|^\gamma}$, with $\gamma=\min\{\alpha,\beta\}$, is bounded as $(\theta_1,\theta_2)\to(0,0)$. Such a proposition will be used in Section \ref{sec:mgm} for proving the constant converge rate of the two-grid and the V-cycle.
\begin{pro}\label{pro4}
Let $\alpha,\beta\in(1,2)$, with $\alpha\ne\beta$, and let $\gamma=\min\{\alpha,\beta\}$, then there exist two real constants $C_1,C_2>0$ such that
\begin{equation*}
C_1<\limsup\limits_{\boldsymbol\theta\to\boldsymbol0}\frac{\mathbf{\mathcal{F}}_{(\alpha,\beta)}(\theta_1,\theta_2)}{|\boldsymbol\theta|^{\gamma}}<C_2,
\end{equation*}
where $\mathbf{0}=(0,0)$ and $\boldsymbol\theta=(\theta_1,\theta_2)$.
\end{pro}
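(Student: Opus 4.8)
The plan is to exploit the additive structure
$\mathbf{\mathcal{F}}_{(\alpha,\beta)}(\theta_1,\theta_2)=d\,q_{\alpha}(\theta_1)+e\frac{s}{r}\,q_{\beta}(\theta_2)$
together with the fact, recalled just before the statement (and proved via Proposition~4 of \cite{M2}), that $q_{\alpha}$ and $q_{\beta}$ are nonnegative and vanish at the origin with order exactly $\alpha$ and $\beta$. Concretely this gives constants $\delta>0$ and $0<c\le c'$ with
\begin{equation*}
c\,|\xi|^{\alpha}\le q_{\alpha}(\xi)\le c'\,|\xi|^{\alpha},\qquad q_{\beta}(\xi)\le c'\,|\xi|^{\beta},\qquad |\xi|\le\delta.
\end{equation*}
Since the two variables play interchangeable roles, I would assume without loss of generality that $\alpha<\beta$, so that $\gamma=\alpha$; the case $\beta<\alpha$ follows by swapping $\theta_1\leftrightarrow\theta_2$. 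Throughout, $d$ and $e\frac{s}{r}$ are fixed positive constants.

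For the upper bound I would estimate the two summands separately on the punctured ball $0<|\boldsymbol\theta|\le\delta$, where $|\theta_1|,|\theta_2|\le|\boldsymbol\theta|\le\delta$. From $q_{\alpha}(\theta_1)\le c'|\theta_1|^{\alpha}\le c'|\boldsymbol\theta|^{\alpha}$ and $q_{\beta}(\theta_2)\le c'|\theta_2|^{\beta}\le c'|\boldsymbol\theta|^{\beta}=c'|\boldsymbol\theta|^{\beta-\alpha}|\boldsymbol\theta|^{\alpha}$ one gets
\begin{equation*}
\frac{\mathbf{\mathcal{F}}_{(\alpha,\beta)}(\theta_1,\theta_2)}{|\boldsymbol\theta|^{\gamma}}\le d\,c'+e\frac{s}{r}\,c'\,|\boldsymbol\theta|^{\beta-\alpha}.
\end{equation*}
Because $\beta-\alpha>0$, the right-hand side tends to $d\,c'$ as $\boldsymbol\theta\to\mathbf{0}$, hence $\limsup_{\boldsymbol\theta\to\mathbf{0}}\mathbf{\mathcal{F}}_{(\alpha,\beta)}/|\boldsymbol\theta|^{\gamma}\le d\,c'$, and any $C_2>d\,c'$ works.

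For the lower bound the key observation is that $\mathbf{\mathcal{F}}_{(\alpha,\beta)}/|\boldsymbol\theta|^{\gamma}$ has \emph{no} limit at the origin — along $\theta_1=0$ it behaves like $|\theta_2|^{\beta-\alpha}\to0$ — so only the $\limsup$ can be bounded below from a positive constant, which is exactly why the statement is phrased in terms of it. I would therefore restrict attention to the ray $\theta_2=0$, $\theta_1\to0$: there $\mathbf{\mathcal{F}}_{(\alpha,\beta)}(\theta_1,0)=d\,q_{\alpha}(\theta_1)\ge d\,c\,|\theta_1|^{\alpha}$ while $|\boldsymbol\theta|=|\theta_1|$, so $\mathbf{\mathcal{F}}_{(\alpha,\beta)}(\theta_1,0)/|\boldsymbol\theta|^{\gamma}\ge d\,c$ for $0<|\theta_1|\le\delta$. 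Since the $\limsup$ over all approaches to $\mathbf{0}$ dominates the values along this particular ray, $\limsup_{\boldsymbol\theta\to\mathbf{0}}\mathbf{\mathcal{F}}_{(\alpha,\beta)}/|\boldsymbol\theta|^{\gamma}\ge d\,c>0$, and any $0<C_1<d\,c$ works.

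The only genuinely substantive ingredient is the two-sided control $q_{\gamma}(\xi)\asymp|\xi|^{\gamma}$ near the origin, i.e.\ that the zero of $q_{\gamma}$ at $0$ has order exactly $\gamma$; this is the part I would simply import from Proposition~4 of \cite{M2} (as already announced in the paragraph preceding the statement) rather than rederiving it from the closed form \eqref{eq:fgamma}. Everything else is elementary; the one point requiring a little care is bookkeeping the correct power of $|\boldsymbol\theta|$ in the cross term, where the strict inequality $\alpha<\beta$ is precisely what makes that term negligible compared with $|\boldsymbol\theta|^{\gamma}$ and thus does not spoil either bound.
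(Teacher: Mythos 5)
Your proof is correct and proceeds along the same conceptual lines as the paper's — split $\mathbf{\mathcal{F}}_{(\alpha,\beta)}$ into its two summands, exploit the fact that $q_{\alpha}$, $q_{\beta}$ vanish at the origin with orders $\alpha$, $\beta$, and get the lower bound by restricting to the ray $\theta_2=0$ — but it is noticeably more streamlined. The paper re-derives the needed estimates from scratch by rewriting $1+{\rm e}^{\mathbf{i}(\theta_k+\pi)}$ in polar form and then estimating quotients such as $(2-2\cos\theta_1)^{\alpha/2}/(\theta_1^2+\theta_2^2)^{\gamma/2}$ together with the limiting behaviour of $\cos(\eta\phi_k)$; you instead take the two-sided asymptotic $q_{\gamma}(\xi)\asymp|\xi|^{\gamma}$ near $0$ as a black box (which the paper itself announces just before the statement, citing Proposition~4 of \cite{M2}) and reduce everything to elementary power comparisons $|\theta_k|\le|\boldsymbol\theta|$ plus the vanishing of the cross term $|\boldsymbol\theta|^{\beta-\alpha}$. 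What your route buys is brevity and transparency: the bookkeeping is purely algebraic and the role of $\alpha<\beta$ is crystal clear. What the paper's route buys is explicit constants (it tracks $-2d\cos(\alpha\pi/2)$, etc.), which then feed directly into the quantitative comparison in Proposition~\ref{pro6}. One minor point worth flagging: to justify the lower-bound asymptotic $q_{\alpha}(\xi)\ge c|\xi|^{\alpha}$ you really do need the order of the zero to be \emph{exactly} $\alpha$ (not merely at least $\alpha$); you invoke this correctly, and the paper's polar-form calculation is precisely the place where that exactness is verified.
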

\begin{proof}
Let us rewrite $1+{\rm e}^{{\bf i}(\theta_k+\pi)}$ and $1-{\rm e}^{-{\bf i}\theta_k}$ in polar form
\[
1+{\rm e}^{{\bf i}(\theta_k+\pi)}= \sqrt{2-2\cos \theta_k}\ {\rm e}^{{\bf i}\phi_k},
\qquad
1-{\rm e}^{-{\bf i}\theta_k}= \sqrt{2-2\cos \theta_k}\ {\rm e}^{-{\bf i}\phi_k},
\]
where
\begin{eqnarray}\label{limtan}
\phi_k=\left\{
\begin{array}{lc}
\arctan\left(\frac{-\sin \theta_k}{1-\cos \theta_k}\right), & \theta_k\ne0 \\
\lim_{\theta_k\rightarrow 0^{+}}\arctan\left(\frac{-\sin \theta_k}{1-\cos \theta_k}\right)=-\frac{\pi}{2}, & \theta_k=0
\end{array}
\right.
\end{eqnarray}
for $k=1,2$. According to the polar form we have that
\begin{eqnarray}\label{lim}\nonumber
&&\limsup\limits_{\boldsymbol\theta\to\boldsymbol0}\frac{\mathbf{\mathcal{F}}_{(\alpha,\beta)}(\theta_1,\theta_2)}{|\boldsymbol\theta|^{\gamma}}\le
\limsup\limits_{\boldsymbol\theta\to\boldsymbol0}\frac{d\,
q_{\alpha}(\theta_1)}{(\theta_1^2+\theta_2^2)^{\frac{\gamma}{2}}}+
\limsup\limits_{\boldsymbol\theta\to\boldsymbol0}\frac{e\frac{s}{r}\,
q_{\beta}(\theta_2)}{(\theta_1^2+\theta_2^2)^{\frac{\gamma}{2}}}\\\nonumber
&=&-2d\limsup\limits_{\boldsymbol\theta\to\boldsymbol0}\frac{(2-2\cos\theta_1)^\frac{\alpha}{2}\cos(\alpha\phi_1)-\frac{\alpha}{2}(2-2\cos\theta_1)^\frac{\alpha+1}{2}\cos((\alpha-1)\phi_1)}
{(\theta_1^2+\theta_2^2)^{\frac{\gamma}{2}}}\\\nonumber
&&-2e\frac{s}{r}\limsup\limits_{\boldsymbol\theta\to\boldsymbol0}\frac{(2-2\cos\theta_2)^\frac{\beta}{2}\cos(\beta\phi_2)-\frac{\beta}{2}(2-2\cos\theta_2)^\frac{\beta+1}{2}\cos((\beta-1)\phi_2)}
{(\theta_1^2+\theta_2^2)^{\frac{\gamma}{2}}}\\\nonumber
&\le&-2d\limsup\limits_{\boldsymbol\theta\to\boldsymbol0}\frac{(2-2\cos\theta_1)^\frac{\alpha}{2}}{(\theta_1^2+\theta_2^2)^{\frac{\gamma}{2}}}\cos(\alpha\phi_1)
+d\,\alpha\limsup\limits_{\boldsymbol\theta\to\boldsymbol0}\frac{(2-2\cos\theta_1)^\frac{\alpha+1}{2}}{(\theta_1^2+\theta_2^2)^{\frac{\gamma}{2}}}\cos((\alpha-1)\phi_1)\\
&&-2e\frac{s}{r}\limsup\limits_{\boldsymbol\theta\to\boldsymbol0}\frac{(2-2\cos\theta_2)^\frac{\beta}{2}}{(\theta_1^2+\theta_2^2)^{\frac{\gamma}{2}}}\cos(\beta\phi_2)
+e\frac{s}{r}\,\beta\limsup\limits_{\boldsymbol\theta\to\boldsymbol0}\frac{(2-2\cos\theta_2)^\frac{\beta+1}{2}}{(\theta_1^2+\theta_2^2)^{\frac{\gamma}{2}}}\cos((\beta-1)\phi_2).
\end{eqnarray}
Moreover, we have that
\begin{eqnarray}\label{aa1}
\limsup\limits_{\boldsymbol\theta\to\boldsymbol0}\frac{(2-2\cos\theta_1)^\frac{\alpha}{2}}{(\theta_1^2+\theta_2^2)^{\frac{\gamma}{2}}}\leq
\limsup\limits_{\theta_1\to0}\frac{(2-2\cos\theta_1)^\frac{\alpha}{2}}{(\theta_1^2)^{\frac{\gamma}{2}}},
\end{eqnarray}
and
\begin{eqnarray}\label{bb1}
\limsup\limits_{\boldsymbol\theta\to\boldsymbol0}\frac{(2-2\cos\theta_2)^\frac{\beta}{2}}{(\theta_1^2+\theta_2^2)^{\frac{\gamma}{2}}}\leq
\limsup\limits_{\theta_2\to0}\frac{(2-2\cos\theta_2)^\frac{\beta}{2}}{(\theta_2^2)^{\frac{\gamma}{2}}}.
\end{eqnarray}
Note that if $\alpha-\gamma\geq0$ and
$\beta-\gamma\geq0$, i.e., $\gamma=\min\{\alpha,\beta\}$, then both limits
(\ref{aa1}) and (\ref{bb1}) are finite. Moreover, since
$$\lim\limits_{\theta_k\to0}\cos(\eta\phi_k)=\cos\left(\eta\frac{\pi}{2}\right)<0,\quad \eta\in(1,2), \quad k=1,2,$$
from relation (\ref{lim}) it holds that
\begin{enumerate}
\item if $\gamma=\alpha$, then
\begin{eqnarray*}
\limsup\limits_{\boldsymbol\theta\to\boldsymbol0}\frac{\mathbf{\mathcal{F}}_{(\alpha,\beta)}(\theta_1,\theta_2)}{|\boldsymbol\theta|^{\gamma}}
\leq -2d\cos\bigg(\alpha\frac{\pi}{2}\bigg)\in(0,2d);
\end{eqnarray*}

\item if $\gamma=\beta$, then
\begin{eqnarray*}
\limsup\limits_{\boldsymbol\theta\to\boldsymbol0}\frac{\mathbf{\mathcal{F}}_{(\alpha,\beta)}(\theta_1,\theta_2)}{|\boldsymbol\theta|^{\gamma}}
\leq -2e\frac{s}{r}\cos\bigg(\beta\frac{\pi}{2}\bigg)\in\left(0,2e\frac{s}{r}\right).
\end{eqnarray*}
\end{enumerate}
It is to convince the reader that $\lim\limits_{\boldsymbol\theta\to\boldsymbol0}
\frac{\mathbf{\mathcal{F}}_{(\alpha,\beta)}(\theta_1,\theta_2)}{|\boldsymbol\theta|^{\gamma}}$ does not exists. Indeed, if w.l.o.g. we fix $\gamma=\alpha<\beta$, then along the lines $\theta_1=0$ and $\theta_2=0$ we get
\begin{eqnarray*}
\begin{array}{lllll}
\lim\limits_{\theta_1\to0}\frac{\mathbf{\mathcal{F}}_{(\alpha,\beta)}(\theta_1,0)}{|\theta_1|^{\alpha}}&=&
\lim\limits_{\theta_1\to0}\frac{d\,q_\alpha(\theta_1)}{|\theta_1|^{\alpha}}&\overset{(\triangle)}{=}&C,\\
\lim\limits_{\theta_2\to0}\frac{\mathbf{\mathcal{F}}_{(\alpha,\beta)}(0,\theta_2)}{|\theta_2|^{\alpha}}&=&
\lim\limits_{\theta_2\to0}\frac{e\frac{s}{r}\,q_\beta(\theta_2)}{|\theta_2|^{\alpha}}&\overset{(\triangle)}{=}&0,
\end{array}
\end{eqnarray*}
where $C$ is a positive constant. The equalities $(\triangle)$ are due to the fact that $q_{\alpha}(\theta_1)$ and $q_{\beta}(\theta_2)$ have a zero at $0$ of order $\alpha$ and $\beta$, respectively, with $\alpha<\beta$ by hypothesis. Therefore, it yields that
\begin{equation*}
\liminf\limits_{\boldsymbol\theta\to\boldsymbol0}\frac{\mathbf{\mathcal{F}}_{(\alpha,\beta)}(\theta_1,\theta_2)}{|\boldsymbol\theta|^{\gamma}}
<\limsup\limits_{\boldsymbol\theta\to\boldsymbol0}\frac{\mathbf{\mathcal{F}}_{(\alpha,\beta)}(\theta_1,\theta_2)}{|\boldsymbol\theta|^{\gamma}},
\end{equation*}
and this, observing that $\liminf\limits_{\boldsymbol\theta\to\boldsymbol0}\frac{\mathbf{\mathcal{F}}_{(\alpha,\beta)}(\theta_1,\theta_2)}{|\boldsymbol\theta|^{\gamma}}
$ is nonnegative, completes the proof.
\end{proof}
\subsection{Nonconstant diffusion coefficients case}
Now we focus on the symbol associated to
$\{\mathbf{\mathcal{M}}^{(m)}_{(\alpha,\beta),N}\}_{_{N\in
\mathbb{N}}}$ and on its spectral distribution, when $d_+(x,y,t)$,
$d_-(x,y,t)$, $e_+(x,y,t)$ and $e_-(x,y,t)$ are nonconstant functions. For
this purpose we need the notion of GLT class and the related theory, starting from the pioneering
work by Tilli \cite{28} and widely generalized in \cite{2525}. In short, the GLT class is an algebra virtually containing any sequence of matrices coming from ``reasonable'' approximations by local discretization methods (finite differences, finite elements, isogeometric analysis, etc) of partial differential equations (see \cite{11}), containing the multilevel Toeplitz sequences with Lebesgue integrable generating functions. The formal definition is rather technical and involves a heavy notation: therefore we just give and briefly discuss the notion in two dimensions, which is the case of interest in our setting, and we report few properties of the GLT class \cite{11} , which are sufficient for studying the spectral features of the matrices $\left\{{\cal M}^{(m)}_{(\alpha,\beta),N}\right\}_{N\in\mathbb{N}}$


Since a GLT sequence is a sequence of matrices obtained from a combination of some algebraic operations on multilevel Toeplitz matrices and diagonal sampling matrices, we need the following definition.

\begin{definition}
Given a Riemann-integrable function $a:[0,1]^2 \rightarrow \mathbb{C}$, by \emph{diagonal sampling matrix} of order $N=n_1n_2$ we mean
$D_N(a) = {\rm diag}_{_{j_1=1,\dots,n_1\atop j_2=1,\dots,n_2}}
a\bigg(\frac{j_1}{n_1},\frac{j_2}{n_2}\bigg)$.
\end{definition}

Throughout, we use the following notation
$$\{A_N\}_{N\in\mathbb{N}}\sim_{GLT} { \kappa(\mathbf{x},\boldsymbol\theta)},$$
to say that the sequence $\{A_N\}_{N\in\mathbb{N}}$ is a GLT sequence with symbol $\kappa(\mathbf{x},\boldsymbol\theta)$.

Here we report four main features of the GLT class in two dimensions.

\begin{description}
\item[GLT1] Let $\{A_N\}_{N\in\mathbb{N}}\sim_{GLT}\kappa(\mathbf{x},\boldsymbol\theta)$ with $\kappa:G\rightarrow\mathbb{C}$, $G=[0,1]^2\times[-\pi,\pi]^2$,
then $\{A_N\}_{N\in\mathbb{N}}\sim_{\sigma}(\kappa,G)$. If the
matrices $A_N$ are Hermitian, then it holds also
$\{A_N\}_{N\in\mathbb{N}}\sim_{\lambda}(\kappa,G)$.

\item[GLT2] The set of GLT sequences form a $\ast$-algebra, i.e., it is closed
under linear combinations, products, inversion (whenever the symbol
vanishes, at most, in a set of zero Lebesgue measure), conjugation:
hence, the sequence obtained via algebraic operations on a finite
set of input GLT sequences is still a GLT sequence and its symbol is
obtained by following the same algebraic manipulations on the
corresponding symbols of the input GLT sequences.

\item[GLT3] Every BTTB sequence $\{T^{(2)}_N(f)\}_{N\in\mathbb{N}}$ generated by an $L^1([-\pi,\pi]^2)$ function $f(\boldsymbol\theta)$ is such that $\{T^{(2)}_N(f)\}_{N\in\mathbb{N}}\sim_{GLT}f(\boldsymbol\theta)$, with the specifications
reported in item $\mathbf{[GLT1]}$. Every diagonal sampling sequence
$\{D_N(a)\}_{N\in\mathbb{N}}$, where
$a(\mathbf{x})$ is a Riemann integrable function, is such that
$\{D_N(a)\}_{N\in\mathbb{N}}\sim_{GLT}a(\mathbf{x})$.

\item[GLT4] Let $\{A_N\}_{N\in\mathbb{N}}\sim_{\sigma}(0,G)$, $G=[0,1]^2\times[-\pi,\pi]^2$, then $\{A_N\}_{N\in\mathbb{N}}\sim_{GLT}0$.
\end{description}

\begin{pro}
Let us assume that $\frac{1}{r}=o(1)$ and
$\frac{s}{r}=\frac{h_x^{\alpha}}{h_y^{\beta}}=\mathcal{O}(1)$ and
that, fixed the instant of time $t_m$, $d_+(x,y):=d_+(x,y,t_m)$,
$d_-(x,y):=d_-(x,y,t_m)$, $e_+(x,y):=e_+(x,y,t_m)$ and
$e_-(x,y):=e_-(x,y,t_m)$ are Riemann integrable over
$[a_1,b_1]\times[a_2,b_2]$. For the matrix
$\mathbf{\mathcal{M}}^{(m)}_{(\alpha,\beta),N}$ it holds
\begin{eqnarray*}
\left\{\mathbf{\mathcal{M}}^{(m)}_{(\alpha,\beta),N}\right\}_{_{N\in
\mathbb{N}}}\sim_{GLT}
\hat{h}_{(\alpha,\beta)}(\hat{\mathbf{x}},\boldsymbol\theta),~~~~~~~~~~\hat{\mathbf{x}}=(\hat{x},\hat{y}),~~\boldsymbol\theta=(\theta_1,\theta_2),
\end{eqnarray*}
with
\begin{eqnarray}\label{h}\nonumber
&&\hat{h}_{(\alpha,\beta)}(\hat{\mathbf{x}},\boldsymbol\theta)=h_{(\alpha,\beta)}(a_1+(b_1-a_1)\hat{x},a_2+(b_2-a_2)\hat{y},\boldsymbol\theta),\\
&&h_{(\alpha,\beta)}(\mathbf{x},\boldsymbol\theta)=g_{\alpha}(\mathbf{x},\theta_1)+\frac{s}{r}g_{\beta}(\mathbf{x},\theta_2),~~~~~\mathbf{x}=(x,y),
\end{eqnarray}
where
\begin{align}\label{g_alphabeta}
\begin{split}
g_{\alpha}(\mathbf{x},\theta_1)&=d_+(x,y)f_{\alpha}(\theta_1)+d_-(x,y)f_{\alpha}(-\theta_1),\\
g_{\beta}(\mathbf{x},\theta_2)&=e_+(x,y)f_{\beta}(\theta_2)+e_-(x,y)f_{\beta}(-\theta_2),
\end{split}
\end{align}
and
$(\hat{\mathbf{x}},\boldsymbol\theta)\in[0,1]^2\times[-\pi,\pi]^2,~(\mathbf{x},\boldsymbol\theta)\in[a_1,b_1]\times[a_2,b_2]\times[-\pi,\pi]^2$.
Furthermore,
\begin{eqnarray*}
\left\{\mathbf{\mathcal{M}}^{(m)}_{(\alpha,\beta),N}\right\}_{_{N\in
\mathbb{N}}}\sim_{\sigma}
\left(h_{(\alpha,\beta)}(\mathbf{x},\boldsymbol\theta),[a_1,b_1]\times[a_2,b_2]\times[-\pi,\pi]^2\right),
\end{eqnarray*}
and whenever $d_+(x,y)=d_-(x,y)=e_+(x,y)=e_-(x,y)$, we also have
\begin{eqnarray*}
\left\{\mathbf{\mathcal{M}}^{(m)}_{(\alpha,\beta),N}\right\}_{_{N\in
\mathbb{N}}}\sim_{\lambda}
\left(h_{(\alpha,\beta)}(\mathbf{x},\boldsymbol\theta),[a_1,b_1]\times[a_2,b_2]\times[-\pi,\pi]^2\right),
\end{eqnarray*}
with $h_{(\alpha,\beta)}(\mathbf{x},\boldsymbol\theta)$
real-valued and indeed all the matrices
$\mathbf{\mathcal{M}}^{(m)}_{(\alpha,\beta),N}$ have only real
eigenvalues.
\end{pro}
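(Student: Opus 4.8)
The strategy is to exhibit $\mathbf{\mathcal{M}}^{(m)}_{(\alpha,\beta),N}$ as a finite combination of multilevel Toeplitz sequences and diagonal sampling sequences and then let the $\ast$-algebra properties $\mathbf{[GLT1]}$--$\mathbf{[GLT4]}$ do the work. Using \eqref{eq:M} and \eqref{AxAy}, $\mathbf{\mathcal{M}}^{(m)}_{(\alpha,\beta),N}$ is the sum of $\tfrac1r I_N$ and of the four terms $D^{(m)}_+(I_{n_2}\otimes A^{\alpha}_{n_1})$, $D^{(m)}_-(I_{n_2}\otimes (A^{\alpha}_{n_1})^T)$, $\tfrac sr E^{(m)}_+(A^{\beta}_{n_2}\otimes I_{n_1})$, $\tfrac sr E^{(m)}_-((A^{\beta}_{n_2})^T\otimes I_{n_1})$, and I would identify each block. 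As in the proof of the corollary to Proposition~\ref{pro2}, the Kronecker factors are BTTB matrices, $I_{n_2}\otimes A^{\alpha}_{n_1}=T^{(2)}_N(f_\alpha(\theta_1))$ and $A^{\beta}_{n_2}\otimes I_{n_1}=T^{(2)}_N(f_\beta(\theta_2))$, while, since the transpose of a multilevel Toeplitz matrix generated by $g$ is generated by $g(-\,\cdot)$, one has $I_{n_2}\otimes (A^{\alpha}_{n_1})^T=T^{(2)}_N(f_\alpha(-\theta_1))$ and $(A^{\beta}_{n_2})^T\otimes I_{n_1}=T^{(2)}_N(f_\beta(-\theta_2))$; by Proposition~\ref{pro2} the symbols $f_\alpha,f_\beta$ are in the Wiener class, hence in $L^1$, so $\mathbf{[GLT3]}$ makes these four Toeplitz sequences GLT with the displayed symbols. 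The diagonal factors $D^{(m)}_\pm=\mathrm{diag}(d_\pm(x_i,y_j,t_m))$, $E^{(m)}_\pm=\mathrm{diag}(e_\pm(x_i,y_j,t_m))$ I would recast, via the affine change of variables $\hat x=(x-a_1)/(b_1-a_1)$, $\hat y=(y-a_2)/(b_2-a_2)$, as diagonal sampling matrices of the Riemann integrable functions $\hat d_\pm(\hat{\mathbf x})=d_\pm(a_1+(b_1-a_1)\hat x,\,a_2+(b_2-a_2)\hat y)$ (and likewise $\hat e_\pm$), so that $\mathbf{[GLT3]}$ yields $\{D^{(m)}_\pm\}\sim_{GLT}\hat d_\pm(\hat{\mathbf x})$, $\{E^{(m)}_\pm\}\sim_{GLT}\hat e_\pm(\hat{\mathbf x})$. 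Finally, $\tfrac1r I_N$ is zero-distributed because $\|\tfrac1r I_N\|_1=N/r=o(N)$ when $1/r=o(1)$, so $\mathbf{[GLT4]}$ makes $\{\tfrac1r I_N\}$ GLT with symbol $0$.

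Next I would assemble the symbol with $\mathbf{[GLT2]}$: products of GLT sequences are GLT with product symbol and sums with sum symbol. Thus $D^{(m)}_+(I_{n_2}\otimes A^{\alpha}_{n_1})\sim_{GLT}\hat d_+(\hat{\mathbf x})f_\alpha(\theta_1)$, and treating the remaining three Toeplitz-times-diagonal products the same way and summing gives $\{A^{(m)}_x+\tfrac sr A^{(m)}_y\}\sim_{GLT}\hat g_\alpha(\hat{\mathbf x},\theta_1)+\tfrac sr\hat g_\beta(\hat{\mathbf x},\theta_2)$, with $\hat g_\alpha,\hat g_\beta$ the functions in \eqref{g_alphabeta} precomposed with the rescaling. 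Adding the zero-symbol term $\tfrac1r I_N$ gives $\{\mathbf{\mathcal{M}}^{(m)}_{(\alpha,\beta),N}\}\sim_{GLT}\hat h_{(\alpha,\beta)}(\hat{\mathbf x},\boldsymbol\theta)$ as in \eqref{h}, and the singular-value clause of $\mathbf{[GLT1]}$ then gives $\{\mathbf{\mathcal{M}}^{(m)}_{(\alpha,\beta),N}\}\sim_\sigma(\hat h_{(\alpha,\beta)},[0,1]^2\times[-\pi,\pi]^2)$; undoing the affine change of variables in the integral defining the distribution (its Jacobian cancelling the measure normalization) rewrites this equivalently as $\sim_\sigma(h_{(\alpha,\beta)},[a_1,b_1]\times[a_2,b_2]\times[-\pi,\pi]^2)$.

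For the eigenvalue statement under $d_\pm=e_\pm=\rho$, I would put $R=\mathrm{diag}(\rho(x_i,y_j))\succ0$, write $S_\gamma=A^\gamma_n+(A^\gamma_n)^T$ (symmetric Toeplitz, generated by $q_\gamma$) and $S=(I_{n_2}\otimes S_\alpha)+\tfrac sr(S_\beta\otimes I_{n_1})$ (symmetric, as $s/r>0$), so that $\mathbf{\mathcal{M}}^{(m)}_{(\alpha,\beta),N}=\tfrac1r I_N+RS=R^{1/2}\big(\tfrac1r I_N+R^{1/2}SR^{1/2}\big)R^{-1/2}$ is similar to the symmetric matrix $\tfrac1r I_N+R^{1/2}SR^{1/2}$; this already gives that all its eigenvalues are real. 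Moreover $\{R^{1/2}\}$ and $\{S\}$ are GLT with symbols $\sqrt{\hat\rho(\hat{\mathbf x})}$ and the real-valued $q_\alpha(\theta_1)+\tfrac sr q_\beta(\theta_2)$ (with $q_\gamma=f_\gamma(\xi)+f_\gamma(-\xi)$ as in Proposition~\ref{PaPb}), so by $\mathbf{[GLT2]}$ and $\mathbf{[GLT4]}$ the Hermitian sequence $\{\tfrac1r I_N+R^{1/2}SR^{1/2}\}$ is GLT with the real-valued symbol $\hat\rho(\hat{\mathbf x})\big(q_\alpha(\theta_1)+\tfrac sr q_\beta(\theta_2)\big)=\hat h_{(\alpha,\beta)}(\hat{\mathbf x},\boldsymbol\theta)$; the Hermitian clause of $\mathbf{[GLT1]}$ gives its eigenvalue distribution, and, since similar matrices share the same spectrum, the same holds for $\mathbf{\mathcal{M}}^{(m)}_{(\alpha,\beta),N}$, which after the change of variables yields $\sim_\lambda(h_{(\alpha,\beta)},[a_1,b_1]\times[a_2,b_2]\times[-\pi,\pi]^2)$.

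The propagation of symbols through $\mathbf{[GLT1]}$--$\mathbf{[GLT4]}$ is routine; the one genuinely delicate point is the GLT identification of the diagonal factors. The grid is $x_i=a_1+i h_x$ with $h_x=(b_1-a_1)/(n_1+1)$, so in rescaled coordinates it samples at $i/(n_1+1)$ rather than at the $i/n_1$ dictated by the definition of diagonal sampling matrix, and the coefficients are assumed only Riemann integrable. Making this rigorous needs the standard GLT fact that replacing the sampling grid by an asymptotically equivalent one --- together with approximating a Riemann integrable symbol by continuous ones --- produces an approximating class of sequences with the same GLT symbol; equivalently, the difference between the two diagonal sequences is zero-distributed in the singular value sense. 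A minor secondary issue is that $s/r=h_x^\alpha/h_y^\beta$ is only assumed $\mathcal{O}(1)$: should it fail to converge one keeps this bounded factor as a parameter in the symbol (or passes to a convergent subsequence), which affects none of the $\sim_\sigma$ or $\sim_\lambda$ conclusions.
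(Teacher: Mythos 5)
Your proof is correct and follows essentially the same route as the paper: decompose $\mathbf{\mathcal{M}}^{(m)}_{(\alpha,\beta),N}$ into diagonal-times-BTTB blocks plus $\tfrac1r I_N$, identify each factor as GLT via \textbf{GLT3}/\textbf{GLT4}, assemble via \textbf{GLT2}, conclude via \textbf{GLT1}, and obtain the eigenvalue distribution under the equal-coefficient hypothesis by the symmetrizing similarity with the positive square root of the diagonal factor. The only addition is your closing paragraph flagging the $i/(n_1+1)$-versus-$i/n_1$ sampling mismatch and the merely-$\mathcal{O}(1)$ behaviour of $s/r$, both of which the paper passes over silently and which you correctly dispose of via a zero-distributed-perturbation argument.
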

\begin{proof}
Let us observe that, fixed the instant of time $t_m$, the diagonal
elements of the matrices $D^{(m)}_{\pm}$ and $E^{(m)}_{\pm}$ are a
uniform sampling of the functions $d_{\pm}(\mathbf{x})$, and $e_{\pm}(\mathbf{x})$, respectively, with
$\mathbf{x}=(x,y)\in[a_1,b_1]\times[a_2,b_2]$. Therefore, thanks to [\textbf{GLT3}] and to the Riemann integrability of the diffusion coefficients it yields
\begin{eqnarray*}
&&\left\{D^{(m)}_{\pm}\right\}_{_{N\in \mathbb{N}}}\sim_{GLT}
\hat{d}_{\pm}(\hat{\mathbf{x}})=d_{\pm}(a_1+(b_1-a_1)\hat{x},a_2+(b_2-a_2)\hat{y}),~~~~~~~\hat{\mathbf{x}}=(\hat{x},\hat{y})\in[0,1]^2,\\
&&\left\{E^{(m)}_{\pm}\right\}_{_{N\in \mathbb{N}}}\sim_{GLT}
\hat{e}_{\pm}(\hat{\mathbf{x}})=e_{\pm}(a_1+(b_1-a_1)\hat{x},a_2+(b_2-a_2)\hat{y}),~~~~~~~\hat{\mathbf{x}}=(\hat{x},\hat{y})\in[0,1]^2.
\end{eqnarray*}
Since the GLT class is stable under linear combinations and
products [\textbf{GLT2}] and since BTTB sequences with $L^1([-\pi,\pi]^2)$ symbols lie in the
GLT class [\textbf{GLT3}], it is immediate to see that, under the hypothesis that
$\frac{s}{r}=\frac{h_x^{\alpha}}{h_y^{\beta}}=\mathcal{O}(1)$, the matrix-sequence
$\{A^{(m)}_x+\frac{s}{r}A^{(m)}_y\}_{_{N\in \mathbb{N}}}$ is still a member  of the GLT class and
\begin{eqnarray}\label{eq:distr}
\{A^{(m)}_x+\frac{s}{r}A^{(m)}_y\}_{_{N\in \mathbb{N}}}\sim_{GLT}\hat{h}_{(\alpha,\beta)}(\hat{\mathbf{x}},\boldsymbol\theta)=g_{\alpha}(\hat{\mathbf{x}},\theta_1)
+\frac{s}{r}g_{\beta}(\hat{\mathbf{x}},\theta_2),
\end{eqnarray}
where $g_\alpha$, $g_\beta$ are defined as in \eqref{g_alphabeta} and $(\hat{\mathbf{x}},\boldsymbol\theta)\in[0,1]^2\times[-\pi,\pi]^2$. Moreover, the sequence $\{\frac{1}{r}I_{N}\}_{_{N\in \mathbb{N}}}$ (under the hypothesis that $\frac{1}{r}=o(1)$) is a GLT sequence
with zero symbol, item [\textbf{GLT4}]. This together with [\textbf{GLT2}] and \eqref{eq:distr} implies that
$\{\mathbf{\mathcal{M}}^{(m)}_{(\alpha,\beta),N}\}_{_{N\in
\mathbb{N}}}\sim_{GLT}
\hat{h}_{(\alpha,\beta)}(\hat{\mathbf{x}},\boldsymbol\theta)$. Then by [\textbf{GLT1}] we can conclude
$\{\mathbf{\mathcal{M}}^{(m)}_{(\alpha,\beta),N}\}_{_{N\in
\mathbb{N}}}\sim_{\sigma}
(\hat{h}_{(\alpha,\beta)}(\hat{\mathbf{x}},\boldsymbol\theta),[0,1]^2\times[-\pi,\pi]^2)$
and hence
$\{\mathbf{\mathcal{M}}^{(m)}_{(\alpha,\beta),N}\}_{_{N\in
\mathbb{N}}}\sim_{\sigma}
(h_{(\alpha,\beta)}(\mathbf{x},\boldsymbol\theta),[a_1,b_1]\times[a_2,b_2]\times[-\pi,\pi]^2)$,
after an affine change of variable.

Now, by exploiting Proposition \ref{pro2} and Proposition
\ref{PaPb}, since
$\mathbf{\mathcal{F}}_{(\alpha,\beta)}(\boldsymbol\theta)$ is
real-valued, it is clear that if $d_+(\mathbf{x})=d_-(\mathbf{x})=e_+(\mathbf{x})=e_-(\mathbf{x})$ then $h_{(\alpha,\beta)}(\mathbf{x},\boldsymbol\theta)$ is real-valued. Furthermore, under the hypothesis $d_+(\mathbf{x})=d_-(\mathbf{x})=e_+(\mathbf{x})=e_-(\mathbf{x})$ we deduce that
$D^{(m)}_+=D^{(m)}_-=E^{(m)}_+=E^{(m)}_-$ is a positive definite diagonal
block matrix, and choosing $G$ as the positive definite square
root of $D^{(m)}_+$ , we find that
$G^{-1}\mathbf{\mathcal{M}}^{(m)}_{(\alpha,\beta),N}G$ is similar to
$\mathbf{\mathcal{M}}^{(m)}_{(\alpha,\beta),N}$ and real symmetric.
Therefore, all the eigenvalues of
$\mathbf{\mathcal{M}}^{(m)}_{(\alpha,\beta),N}$ are real and we
plainly have
$\{\mathbf{\mathcal{M}}^{(m)}_{(\alpha,\beta),N}\}_{_{N\in
\mathbb{N}}}\sim_{\lambda}
(h_{(\alpha,\beta)}(\mathbf{x},\boldsymbol\theta),[a_1,b_1]\times[a_2,b_2]\times[-\pi,\pi]^2)$,
by exploiting again the GLT machinery, as done before but in the
Hermitian setting.
\end{proof}
Now, let us assume that all the diffusion coefficients are uniformly bounded and positive. Under this hypothesis, the following proposition can be seen as an extension to the nonconstant coefficients case of the result for constant coefficients shown in Proposition \ref{pro4}.

\begin{pro}\label{pro6}
Let us assume that $\alpha\ne\beta$. Given $\mathbf{\mathcal{F}}_{(\alpha,\beta)}(\boldsymbol\theta)$ as in (\ref{P}) and $h_{(\alpha,\beta)}(\mathbf{x},\boldsymbol\theta)$ as in
(\ref{h}), the following limit relation holds
\begin{eqnarray*}
&&\limsup\limits_{\boldsymbol\theta\to\mathbf{0}}\frac{h_{(\alpha,\beta)}(\mathbf{x},\boldsymbol\theta)}{\mathbf{\mathcal{F}}_{(\alpha,\beta)}(\boldsymbol\theta)}=
\begin{cases}
\frac{d_+(x,y)+d_-(x,y)}{2d}-\mathbf{i}\tan(\alpha\frac{\pi}{2})\frac{d_+(x,y)-d_-(x,y)}{2d},\qquad
~~~~~~~\gamma=\alpha,
    \cr
\frac{e_+(x,y)+e_-(x,y)}{2e}-\mathbf{i}\tan(\beta\frac{\pi}{2})\frac{e_+(x,y)-e_-(x,y)}{2e},\qquad
~~~~~~~\gamma=\beta,
\end{cases}\\
\end{eqnarray*}
where $\gamma=\min\{\alpha,\beta\}$, $\mathbf{x}=(x,y)$,
$\mathbf{0}=(0,0)$ and $\boldsymbol\theta=(\theta_1,\theta_2)$.
\end{pro}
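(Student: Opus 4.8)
The plan is to reduce everything to the one-variable asymptotics of $f_\gamma$ near the origin, reusing the polar-form identities from the proof of Proposition~\ref{pro4}. Substituting $1+{\rm e}^{{\bf i}(\theta_k+\pi)}=\sqrt{2-2\cos\theta_k}\,{\rm e}^{{\bf i}\phi_k}$ and $1-{\rm e}^{-{\bf i}\theta_k}=\sqrt{2-2\cos\theta_k}\,{\rm e}^{-{\bf i}\phi_k}$ into the closed form \eqref{eq:fgamma} yields
\[
f_{\gamma}(\theta_k)=-(2-2\cos\theta_k)^{\gamma/2}{\rm e}^{{\bf i}\gamma\phi_k}+\tfrac{\gamma}{2}(2-2\cos\theta_k)^{(\gamma+1)/2}{\rm e}^{{\bf i}(\gamma-1)\phi_k}.
\]
Since $\phi_k$ is odd in $\theta_k$, this gives the expansions of $g_\alpha(\mathbf{x},\theta_1)=d_+(x,y)f_\alpha(\theta_1)+d_-(x,y)f_\alpha(-\theta_1)$, of $g_\beta(\mathbf{x},\theta_2)$, and of $q_\gamma(\theta_k)=2\,{\rm Re}\,f_\gamma(\theta_k)=-2(2-2\cos\theta_k)^{\gamma/2}\cos(\gamma\phi_k)+\gamma(2-2\cos\theta_k)^{(\gamma+1)/2}\cos((\gamma-1)\phi_k)$. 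Because $(2-2\cos\theta)^{1/2}=2|\sin(\theta/2)|\sim|\theta|$, the first summand in each of these has exact order $|\theta_k|^{\gamma}$ while the second is $O(|\theta_k|^{\gamma+1})$; hence $q_\alpha(\theta_1)\asymp|\theta_1|^{\alpha}$, $q_\beta(\theta_2)\asymp|\theta_2|^{\beta}$, so both $\mathbf{\mathcal{F}}_{(\alpha,\beta)}(\boldsymbol\theta)$ and $|h_{(\alpha,\beta)}(\mathbf{x},\boldsymbol\theta)|$ are bounded above and below by positive multiples of $|\theta_1|^{\alpha}+|\theta_2|^{\beta}$ near $\mathbf{0}$ (using the uniform positivity and boundedness of the diffusion coefficients). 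In particular the quotient stays bounded, so the superior limit is finite.

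Next I would locate the approach direction realizing the superior limit. Since $\alpha\ne\beta$, assume without loss of generality $\gamma=\alpha<\beta$, the case $\gamma=\beta$ being obtained by the same argument after interchanging $(\alpha,d_\pm,d)$ with $(\beta,e_\pm,e)$. Along the half-line $\theta_2=0$, $\theta_1\to0^{+}$ — the relevant one, because $|\theta_2|^{\beta}/|\theta_1|^{\alpha}\to0$ renders the $\beta$-contributions to numerator and denominator negligible against the $\alpha$-contributions — one has $h_{(\alpha,\beta)}(\mathbf{x},\boldsymbol\theta)=g_\alpha(\mathbf{x},\theta_1)$ and $\mathbf{\mathcal{F}}_{(\alpha,\beta)}(\boldsymbol\theta)=d\,q_\alpha(\theta_1)$; cancelling the common factor $(2-2\cos\theta_1)^{\alpha/2}$ gives
\[
\frac{h_{(\alpha,\beta)}(\mathbf{x},\boldsymbol\theta)}{\mathbf{\mathcal{F}}_{(\alpha,\beta)}(\boldsymbol\theta)}=\frac{d_+(x,y){\rm e}^{{\bf i}\alpha\phi_1}+d_-(x,y){\rm e}^{-{\bf i}\alpha\phi_1}+O(|\theta_1|)}{2d\cos(\alpha\phi_1)+O(|\theta_1|)}\xrightarrow[\theta_1\to0^{+}]{}\frac{d_+(x,y){\rm e}^{-{\bf i}\alpha\pi/2}+d_-(x,y){\rm e}^{{\bf i}\alpha\pi/2}}{2d\cos(\alpha\pi/2)},
\]
using $\phi_1\to-\pi/2$ and $\cos(\alpha\pi/2)\ne0$ for $\alpha\in(1,2)$. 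Expanding the exponentials by Euler's formula and dividing through produces exactly $\frac{d_+(x,y)+d_-(x,y)}{2d}-{\bf i}\tan(\alpha\tfrac{\pi}{2})\frac{d_+(x,y)-d_-(x,y)}{2d}$, i.e.\ the first branch; the second branch is the mirror computation.

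I expect the main obstacle to be the careful handling of the superior limit of a complex-valued quotient, rather than the algebra. The ordinary limit does not exist — paths with $\theta_1\to0^{-}$ give the complex conjugate of the displayed value, and paths along which $\theta_1$ is negligible against $\theta_2$ give instead the $\beta$-analogue — so one must argue that the coordinate half-line above furnishes the value that actually controls the V-cycle and two-grid estimates of Section~\ref{sec:mgm}, and that the $O(|\theta_1|)$ remainders (the contribution of the second WSGD weight, carried by the factor $(2-2\cos\theta_1)^{(\gamma+1)/2}$) vanish uniformly in $\mathbf{x}$. Everything else is the polar-to-rectangular bookkeeping already rehearsed in Propositions~\ref{pro2}, \ref{PaPb} and \ref{pro4}.
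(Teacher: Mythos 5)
Your proposal is correct and follows essentially the same route as the paper: both write $h_{(\alpha,\beta)}$ and $\mathbf{\mathcal{F}}_{(\alpha,\beta)}$ in the polar form based on $1+{\rm e}^{\mathbf{i}(\theta_k+\pi)}=\sqrt{2-2\cos\theta_k}\,{\rm e}^{\mathbf{i}\phi_k}$, cancel the dominant factor $(2-2\cos\theta_1)^{\alpha/2}$ when $\gamma=\alpha$, and pass to the one-variable limit as $\theta_1\to0^{+}$ using $\phi_1\to-\pi/2$, which after Euler's formula yields exactly $\frac{d_++d_-}{2d}-\mathbf{i}\tan(\alpha\pi/2)\frac{d_+-d_-}{2d}$. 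The caveat you raise about the ambiguity of a complex-valued $\limsup$ and the resulting path dependence is genuine, but the paper proceeds with the same informality: it singles out the one-sided limit $\theta_1\to0^{+}$ through its choice $\limsup_{\theta_k\to 0}\tan(\gamma\phi_k)=\lim_{\theta_k\to0^+}\tan(\gamma\phi_k)$ and discards the $\beta$-terms by asserting $(1-\cos\theta_2)^{\beta}/(1-\cos\theta_1)^{\alpha}\to0$ without restriction on the approach path.
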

\begin{proof}
As in the proof of Proposition \ref{pro4}, we exploit the polar form
of $1+e^{\mathbf{i}(\theta_k+\pi)}$ and $1-e^{-\mathbf{i}\theta_k}$, for $k=1,2$, and rewrite
$h_{(\alpha,\beta)}(\mathbf{x},\boldsymbol\theta)$ and
$\mathbf{\mathcal{F}}_{(\alpha,\beta)}(\boldsymbol\theta)$ as
follows
\begin{align}\label{HHH}\nonumber
&\hspace{-0.5cm}
h_{(\alpha,\beta)}(\mathbf{x},\boldsymbol\theta)=\\\nonumber
\hspace{-3cm}
=&-d_+(x,y)\bigg((2-2\cos\theta_1)^{\frac{\alpha}{2}}e^{\mathbf{i}\alpha\phi_1}
-\frac{\alpha}{2}(2-2\cos\theta_1)^{\frac{\alpha+1}{2}}e^{\mathbf{i}(\alpha-1)\phi_1}\bigg)\\\nonumber
&-d_-(x,y)\bigg((2-2\cos\theta_1)^{\frac{\alpha}{2}}e^{-\mathbf{i}\alpha\phi_1}
-\frac{\alpha}{2}(2-2\cos\theta_1)^{\frac{\alpha+1}{2}}e^{-\mathbf{i}(\alpha-1)\phi_1}\bigg)\\\nonumber
&-\frac{s}{r}e_+(x,y)\bigg((2-2\cos\theta_2)^{\frac{\beta}{2}}e^{\mathbf{i}\beta\phi_2}-
\frac{\beta}{2}(2-2\cos\theta_2)^{\frac{\beta+1}{2}}e^{\mathbf{i}(\beta-1)\phi_2}\bigg)\\\nonumber
&-\frac{s}{r}e_-(x,y)\bigg((2-2\cos\theta_2)^{\frac{\beta}{2}}e^{-\mathbf{i}\beta\phi_2}-
\frac{\beta}{2}(2-2\cos\theta_2)^{\frac{\beta+1}{2}}e^{-\mathbf{i}(\beta-1)\phi_2}\bigg)\\\nonumber
=&-(2-2\cos\theta_1)^{\frac{\alpha}{2}}\bigg[d_+(x,y)\bigg(e^{\mathbf{i}\alpha\phi_1}
-\frac{\alpha}{2}(2-2\cos\theta_1)^{\frac{1}{2}}e^{\mathbf{i}(\alpha-1)\phi_1}\bigg)\\\nonumber
&+d_-(x,y)\bigg(e^{-\mathbf{i}\alpha\phi_1}
-\frac{\alpha}{2}(2-2\cos\theta_1)^{\frac{1}{2}}e^{-\mathbf{i}(\alpha-1)\phi_1}\bigg)\bigg]\\\nonumber
&-\frac{s}{r}(2-2\cos\theta_2)^{\frac{\beta}{2}}\bigg[e_+(x,y)\bigg(e^{\mathbf{i}\beta\phi_2}
-\frac{\beta}{2}(2-2\cos\theta_2)^{\frac{1}{2}}e^{\mathbf{i}(\beta-1)\phi_2}\bigg)\\
&+e_-(x,y)\bigg(e^{-\mathbf{i}\beta\phi_2}
-\frac{\beta}{2}(2-2\cos\theta_2)^{\frac{1}{2}}e^{-\mathbf{i}(\beta-1)\phi_2}\bigg)\bigg],
\end{align}
\begin{eqnarray}\label{PPP}\nonumber
\mathbf{\mathcal{F}}_{(\alpha,\beta)}(\boldsymbol\theta)&=&
-2d\,\bigg((2-2\cos\theta_1)^{\frac{\alpha}{2}}\cos(\alpha\phi_1)
-\frac{\alpha}{2}(2-2\cos\theta_1)^{\frac{\alpha+1}{2}}\cos((\alpha-1)\phi_1)\bigg)\\\nonumber
&&-2e\frac{s}{r}\,\bigg((2-2\cos\theta_2)^{\frac{\beta}{2}}\cos(\beta\phi_2)
-\frac{\beta}{2}(2-2\cos\theta_2)^{\frac{\beta+1}{2}}\cos((\beta-1)\phi_2)\bigg)\\\nonumber
&=&-2(2-2\cos\theta_1)^{\frac{\alpha}{2}}\bigg[d\,\bigg(\cos(\alpha\phi_1)
-\frac{\alpha}{2}(2-2\cos\theta_1)^{\frac{1}{2}}\cos((\alpha-1)\phi_1)\bigg)\\\nonumber
&&+e\frac{s}{r}\,\bigg(\frac{(2-2\cos\theta_2)^{\frac{\beta}{2}}}{(2-2\cos\theta_1)^{\frac{\alpha}{2}}}\cos(\beta\phi_2)
-\frac{\beta}{2}\frac{(2-2\cos\theta_2)^{\frac{\beta+1}{2}}}{(2-2\cos\theta_1)^{\frac{\alpha}{2}}}\cos((\beta-1)\phi_2)\bigg)\bigg]\\\nonumber
&=&-2(2-2\cos\theta_2)^{\frac{\beta}{2}}\bigg[d\,\bigg(\frac{(2-2\cos\theta_1)^{\frac{\alpha}{2}}}{(2-2\cos\theta_2)^{\frac{\beta}{2}}}\cos(\alpha\phi_1)
-\frac{\alpha}{2}\frac{(2-2\cos\theta_1)^{\frac{\alpha+1}{2}}}{(2-2\cos\theta_2)^{\frac{\beta}{2}}}\cos((\alpha-1)\phi_1)\\
&&+e\frac{s}{r}\,\bigg(\cos(\beta\phi_2)-\frac{\beta}{2}(2-2\cos\theta_2)^{\frac{1}{2}}\cos((\beta-1)\phi_2)\bigg)\bigg].
\end{eqnarray}
According to the definition of $\phi_k$ in \eqref{limtan}, it is easy to see that for
$\gamma\in(1,2)$ and $k=1,2$, we have
\begin{align}\label{tan:3}
\begin{split}
\limsup\limits_{\theta_k\to0}\,\tan(\gamma\phi_k)&=\lim\limits_{\theta_k\to0^+}\tan(\gamma\phi_k)=-\tan\bigg(\gamma\frac{\pi}{2}\bigg)>0,\\ \liminf\limits_{\theta_k\to0}\,\tan(\gamma\phi_k)&=\lim\limits_{\theta_k\to0^-}\tan(\gamma\phi_k)=\tan\bigg(\gamma\frac{\pi}{2}\bigg)<0.
\end{split}
\end{align}
Combining relations (\ref{HHH}) and (\ref{PPP}), we obtain
\begin{align}\label{HHHPPP}\nonumber
&\frac{h_{(\alpha,\beta)}(\mathbf{x},\boldsymbol\theta)}{\mathbf{\mathcal{F}}_{(\alpha,\beta)}(\boldsymbol\theta)}=\\\nonumber
&\frac{d_+(x,y)\bigg(e^{\mathbf{i}\alpha\phi_1}
-\frac{\alpha}{2}(2-2\cos\theta_1)^{\frac{1}{2}}e^{\mathbf{i}(\alpha-1)\phi_1}\bigg)
+d_-(x,y)\bigg(e^{-\mathbf{i}\alpha\phi_1}
-\frac{\alpha}{2}(2-2\cos\theta_1)^{\frac{1}{2}}e^{-\mathbf{i}(\alpha-1)\phi_1}\bigg)}{2\bigg[d\,\bigg(\cos(\alpha\phi_1)
-\frac{\alpha}{2}(2-2\cos\theta_1)^{\frac{1}{2}}\cos((\alpha-1)\phi_1)\bigg)
+e\frac{s}{r}\,\bigg(\frac{(2-2\cos\theta_2)^{\frac{\beta}{2}}}{(2-2\cos\theta_1)^{\frac{\alpha}{2}}}\cos(\beta\phi_2)
-\frac{\beta}{2}\frac{(2-2\cos\theta_2)^{\frac{\beta+1}{2}}}{(2-2\cos\theta_1)^{\frac{\alpha}{2}}}\cos((\beta-1)\phi_2)\bigg)\bigg]}+\\
&\frac{\frac{s}{r}e_+(x,y)\bigg(e^{\mathbf{i}\beta\phi_2}
-\frac{\beta}{2}(2-2\cos\theta_2)^{\frac{1}{2}}e^{\mathbf{i}(\beta-1)\phi_2}\bigg)
+\frac{s}{r}e_-(x,y)\bigg(e^{-\mathbf{i}\beta\phi_2}
-\frac{\beta}{2}(2-2\cos\theta_2)^{\frac{1}{2}}e^{-\mathbf{i}(\beta-1)\phi_2}\bigg)}{2\bigg[d\,\bigg(\frac{(2-2\cos\theta_1)^{\frac{\alpha}{2}}}{(2-2\cos\theta_2)^{\frac{\beta}{2}}}\cos(\alpha\phi_1)
-\frac{\alpha}{2}\frac{(2-2\cos\theta_1)^{\frac{\alpha+1}{2}}}{(2-2\cos\theta_2)^{\frac{\beta}{2}}}\cos((\alpha-1)\phi_1)
+e\frac{s}{r}\,\bigg(\cos(\beta\phi_2)-\frac{\beta}{2}(2-2\cos\theta_2)^{\frac{1}{2}}\cos((\beta-1)\phi_2)\bigg)\bigg]}
\end{align}
Now, we calculate the limit of the quotient
$\frac{h_{(\alpha,\beta)}(\mathbf{x},\boldsymbol\theta)}{\mathbf{\mathcal{F}}_{(\alpha,\beta)}(\boldsymbol\theta)}$. If $\gamma=\alpha$, we have
\begin{eqnarray}\label{alphaa}
\lim\limits_{{\boldsymbol\theta\to\mathbf{0}}}\frac{(1-\cos\theta_2)^{\beta}}{(1-\cos\theta_1)^{\alpha}}=0,~~~~~~~~~~~~
\lim\limits_{\boldsymbol\theta\to\mathbf{0}}\frac{(1-\cos\theta_1)^{\alpha}}{(1-\cos\theta_2)^{\beta}}=\infty,
\end{eqnarray}
and then thanks to relations \eqref{tan:3},
(\ref{HHHPPP}), and (\ref{alphaa}), we conclude that
\begin{align*}
\begin{split}
\limsup\limits_{\boldsymbol\theta\to\mathbf{0}}\frac{h_{(\alpha,\beta)}(\mathbf{x},\boldsymbol\theta)}{\mathbf{\mathcal{F}}_{(\alpha,\beta)}(\boldsymbol\theta)}
&=\limsup\limits_{\theta_1\to0}\frac{d_+(x,y)e^{\mathbf{i}\alpha\phi_1}
+d_-(x,y)e^{-\mathbf{i}\alpha\phi_1}}{2d\cos(\alpha\phi_1)}\\
&=\frac{d_+(x,y)+d_-(x,y)}{2d}+\mathbf{i}(\lim\limits_{\theta_1\to 0^+}\tan(\alpha\phi_1))\frac{d_+(x,y)-d_-(x,y)}{2d}.
\end{split}
\end{align*}

If $\gamma=\beta$, we obtain an analogous result with $\beta$, $e_{\pm}$, $\theta_2$ in place of $\alpha$, $d_{\pm}$, $\theta_1$, respectively, and the thesis is proved.
\end{proof}

\section{Multigrid methods}\label{sec:mgm}

Multigrid methods have shown to be a valid alternative to
preconditioned Krylov methods also for FDEs \cite{29}. Using the
Ruge-St\"{u}ben theory \cite{20}, Theorem 4 in \cite{M2} and in
\cite{29} for one-dimensional case shows that, in the constant and
varying coefficients cases, i.e., $d_{\pm}(x,t)=d>0$ and
$d_+(x,t)=d_-(x,t)>0$, respectively, the two-grid method converges
with a linear convergence rate independent of $N$ and $m$. Since
in this case the matrix
$\mathbf{\mathcal{M}}^{(m)}_{(\alpha,\beta),N}$ is a BTTB matrix,
the classical multigrid theory for BTTB matrices developed in
\cite{9,6,2424,222} can be directly applied when the symbol is
known. Under the assumptions that $d_{\pm}(x,y,t)=d>0$,
$e_{\pm}(x,y,t)=e>0$, $\frac{1}{r}=o(1)$ and
$\frac{s}{r}=\frac{h^{\alpha}_x}{h^{\beta}_y}=\mathcal{O}(1)$, according
to our previous analysis in Subsection \ref{CDC}, the symbol of the
BTTB sequence
$\{\mathbf{\mathcal{M}}^{(m)}_{(\alpha,\beta),N}\}_{_{N\in
\mathbb{N}}}$
 is $d\, q_{\alpha}(\theta_1)+\frac{s}{r}e\, q_{\beta}(\theta_2)$ (cf.
Proposition \ref{PaPb}).

Consider the stationary iterative method
\begin{eqnarray}\label{Sn}
x^{(j+1)} = S_Nx^{(j)} + b_1 := \mathcal{S}_N(x^{(j)}, b_1)
\end{eqnarray}
for the solution of the linear system $A_Nx=b$ where
$A_N,W_N,S_N:=I-W_N^{-1}A_N\in\mathbb{C}^{N\times N}$, and
$b,b_1:=W_N^{-1}b\in\mathbb{C}^N$. Given a full-rank matrix
$P_N\in\mathbb{C}^{N\times k}$, with $k<N$, a \emph{Two-Grid Method
(TGM)} is defined by the following algorithm~\cite{18}.
\begin{center}
\begin{tabular*}{\columnwidth}{@{\extracolsep{\fill}}*{1}{l}}
\hline \hline
\textbf{TGM}$(S_N,P_N)$\\
\hline \hline
1) $r_N=b-A_Nx^{(j)}$\\
2) $d_k=P_N^Tr_N$\\
3) $A_k=P_N^TA_NP_N$\\
4) Solve $A_ky=d_k$\\
5) $\hat{x}^{(j)}=x^{(j)}+P_Ny$\\
6) $x^{(j+1)}=\mathcal{S}^{\nu}_N(\hat{x}^{(j)}, b_1)$\\
\hline \hline
\end{tabular*}
\end{center}
Step 6) consists in applying the ``post-smoothing
iteration'' (\ref{Sn}) $\nu$ times while steps 1)$\rightarrow$ 5)
define the ``coarse grid correction" which depends on the
grid transfer operator $P_N$. The iteration matrix of
TGM is then given by
\[TGM =
S^{\nu}_N\bigg[I-P_N\bigg(P_N^TA_NP_N\bigg)^{-1}P_N^TA_N\bigg].
\]
It could be possible also to add a ``pre-smoothing'' iteration in the TGM algorithm before the step 1).
We do not add it here for keeping the theoretical analysis as simple as possible. Nevertheless, the addition of a convergent pre-smoother, obviously, cannot deteriorate the convergence of the algorithm but only accelerate the convergence.

Here we want to give a formal proof of convergence and of the
optimality of TGM. 
We recall some convergence results in
\cite{20} from the theory of the algebraic multigrid method which
are the main theoretical tools for giving our convergence proof. By
$\|\cdot\|_2$ we denote the Euclidean norm and whenever $X$
is positive definite, $\|\cdot\|_X=\|X^{1/2}\cdot\|_2$ denotes the
energy norm of $X$. Finally if $X$ and $Y$ are Hermitian matrices,
then $X\leq Y$ is equivalent to write that $Y-X$ is nonnegative
definite.
\begin{theorem}\label{theorem21}
\cite{20} Let $A_N$ be a positive definite matrix of size $N$ and
let $S_N$ be defined as in the TGM algorithm. Suppose that
$\exists\;\delta>0$ independent of $N$ such that
\begin{eqnarray}\label{smo}
\|S_N\mathbf{x}_N\|^2_{A_N}\leq\|\mathbf{x}_N\|^2_{A_N}-\delta\|\mathbf{x}_N\|^2_{A_ND^{-1}_NA_N},
~~~~~~~\forall\mathbf{x}_N\in\mathbb{C}^N,
\end{eqnarray}
where $D_N$ is the diagonal matrix having the same diagonal of $A_N$. Assume that  $\exists \, \xi>0$
independent of $N$ such that
\begin{eqnarray}\label{app}
\min\limits_{\mathbf{y}\in\mathbb{C}^k}\|\mathbf{x}_N-P_N\mathbf{y}\|^2_{D_N}\leq\xi\|\mathbf{x}_N\|^2_{A_N},
~~~~~~~\forall\mathbf{x}_N\in\mathbb{C}^N.
\end{eqnarray}
Then $\xi\geq\delta$ and
\[
\|TGM\|_{A_N}\leq\sqrt{1-\delta/\xi}.
\]
\end{theorem}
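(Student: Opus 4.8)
The plan is to carry out the classical Ruge--St\"uben two-grid argument, whose engine is the fact that the coarse grid correction is an $A_N$-orthogonal projection. Write $A=A_N$, $D=D_N$, $S=S_N$, $P=P_N$, and set
\[
T:=I-P\bigl(P^{T}AP\bigr)^{-1}P^{T}A,
\]
so that the iteration matrix of the algorithm is $S^{\nu}T$. A short computation gives $P^{T}AT=0$, hence $T^{2}=T$ and $\mathrm{Range}(T)=\ker(P^{T}A)$, which is exactly the $A$-orthogonal complement of $\mathrm{Range}(P)$; therefore $T$ is the $A$-orthogonal projector onto that subspace and $\|T\|_{A}\le 1$. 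Note also that $D$, being the diagonal of the positive definite matrix $A$, is itself positive definite, so that $\|\cdot\|_{D}$ and $\|\cdot\|_{AD^{-1}A}$ are bona fide norms.

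The core of the proof is the inequality
\[
\|y\|_{AD^{-1}A}^{2}\ge \xi^{-1}\,\|y\|_{A}^{2}\qquad\text{for all }y\in\mathrm{Range}(T).
\]
To establish it, fix $y\in\mathrm{Range}(T)$, so that $P^{T}Ay=0$, and for an arbitrary coarse vector $w$ write, using Cauchy--Schwarz,
\[
\|y\|_{A}^{2}=\langle Ay,\,y-Pw\rangle=\langle D^{-1/2}Ay,\,D^{1/2}(y-Pw)\rangle\le\|y\|_{AD^{-1}A}\,\|y-Pw\|_{D}.
\]
Minimising the right-hand side over $w$ and invoking the approximation property (\ref{app}) with $\mathbf{x}_N:=y$ gives $\|y\|_{A}^{2}\le\sqrt{\xi}\,\|y\|_{AD^{-1}A}\,\|y\|_{A}$, whence the claim after dividing by $\|y\|_{A}$ (the case $\|y\|_{A}=0$, which forces $y=0$, being trivial).

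Next I would apply the smoothing property (\ref{smo}) to the vector $T\mathbf{x}_N$ and use the inequality just proved:
\[
\|S\,T\mathbf{x}_{N}\|_{A}^{2}\le\|T\mathbf{x}_{N}\|_{A}^{2}-\delta\,\|T\mathbf{x}_{N}\|_{AD^{-1}A}^{2}\le\Bigl(1-\frac{\delta}{\xi}\Bigr)\|T\mathbf{x}_{N}\|_{A}^{2}.
\]
Since the left-hand side is nonnegative and $\mathrm{Range}(T)\neq\{0\}$ (because $k<N$), choosing $\mathbf{x}_N$ with $T\mathbf{x}_N\neq 0$ forces $1-\delta/\xi\ge 0$, i.e.\ $\xi\ge\delta$. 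Finally, the smoothing property with the $\delta$-term discarded yields $\|S\|_{A}\le 1$, so
\[
\|S^{\nu}T\mathbf{x}_{N}\|_{A}\le\|S\|_{A}^{\nu-1}\,\|S\,T\mathbf{x}_{N}\|_{A}\le\sqrt{1-\delta/\xi}\;\|T\mathbf{x}_{N}\|_{A}\le\sqrt{1-\delta/\xi}\;\|\mathbf{x}_{N}\|_{A},
\]
where the last step uses $\|T\|_{A}\le 1$; this is exactly $\|TGM\|_{A}\le\sqrt{1-\delta/\xi}$. I do not anticipate a genuine obstacle here: the one point requiring care is the identification of $T$ as an $A$-orthogonal projection together with the Cauchy--Schwarz manipulation that couples the $D$-norm appearing in the approximation property to the $AD^{-1}A$-norm appearing in the smoothing property on $\mathrm{Range}(T)$; the rest is bookkeeping with norms.
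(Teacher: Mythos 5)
The paper does not prove this theorem: it is stated and attributed to Ruge--St\"uben \cite{20} without proof. Your reconstruction is correct and is precisely the standard Ruge--St\"uben two-grid argument from that reference -- identify the coarse-grid correction $T$ as the $A$-orthogonal projector onto $\ker(P^TA)$, use the approximation property together with Cauchy--Schwarz (in the form $\langle D^{-1/2}Ay,\,D^{1/2}(y-Pw)\rangle$) to show $\|y\|_A^2\le\xi\,\|y\|_{AD^{-1}A}^2$ on $\mathrm{Range}(T)$, then feed this into the smoothing inequality applied to $T\mathbf{x}_N$ and finish by $\|S\|_A\le 1$ and $\|T\|_A\le 1$. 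The only thing worth flagging is that this argument gives the bound $\sqrt{1-\delta/\xi}$ already for $\nu=1$; the extra $\nu-1$ smoothing steps are only being bounded trivially by $\|S\|_A\le 1$, so your estimate does not improve with $\nu$ -- this is consistent with the cited statement, which makes no claim of $\nu$-dependence, but it is worth being aware that a sharper analysis would use the full smoothing inequality at every step.
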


When $N$ is huge also the coarser problem can be too large to be solved directly. Hence, the point 4) in the TGM should be replaced by a recursive application of the same strategy until we reach a sufficiently small size.
A reliable initial guess for the coarser linear system is the zero vector. Indeed, at the coarser levels the linear system to be solved in the error equation and hence the solution goes to zero whenever the whole iteration converges. The resulting algorithm is usually referred as V-cycle.

\subsection{Two-grid convergence analysis}\label{sec:tgmconv}
The convergence analysis of the TGM is firstly developed in the constant coefficient case, i.e., in the case $A_N = T_N^{(2)}(\mathbf{\mathcal{F}}_{(\alpha,\beta)})$.
The variable coefficient case will be discussed at the end of the subsection.

Let us start showing that damped Jacobi, with a proper choice of the relaxation parameter, satisfies the \emph{smoothing property} \eqref{smo}.

\begin{lem}\label{lemsmo}
Let $A_N:=T^{(2)}_{N}(\mathbf{\mathcal{F}}_{(\alpha,\beta)})$ with $\mathbf{\mathcal{F}}_{(\alpha,\beta)}$
defined as in \eqref{P} and let
$S_N:=I_N-\omega D^{-1}_NA_N$, where $\omega$ is a real number and $D_N=a_0I_N$ with $a_0$ the Fourier coefficient of $\mathbf{\mathcal{F}}_{(\alpha,\beta)}$ of order zero. Moreover,
let us assume that $\frac{1}{r}=\frac{2h_x^{\alpha}}{\triangle t}=o(1)$ and
$\frac{s}{r}=\frac{h_x^{\alpha}}{h_y^{\beta}}=\mathcal{O}(1)$. If we
choose $0<\omega<\frac{2a_0}{\|\mathbf{\mathcal{F}}_{(\alpha,\beta)}\|_{\infty}}$,
then $\exists\;\delta>0$ such that inequality (\ref{smo}) holds true. 
\end{lem}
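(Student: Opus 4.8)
The plan is to verify the classical smoothing property for a damped-Jacobi smoother in the Toeplitz setting, following the line of reasoning used in the one-dimensional analyses of \cite{29,M2}. Since $A_N = T^{(2)}_N(\mathbf{\mathcal{F}}_{(\alpha,\beta)})$ with $\mathbf{\mathcal{F}}_{(\alpha,\beta)}$ real-valued and nonnegative (Proposition~\ref{PaPb}), the matrix $A_N$ is Hermitian positive semidefinite, and in fact positive definite for finite $N$ since the symbol vanishes only at the single point $\boldsymbol\theta=\mathbf{0}$. The diagonal of $A_N$ is the constant $a_0 = \frac{1}{(2\pi)^2}\int_{[-\pi,\pi]^2}\mathbf{\mathcal{F}}_{(\alpha,\beta)}(\boldsymbol\theta)\,d\boldsymbol\theta > 0$, so $D_N = a_0 I_N$ as stated. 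First I would rewrite the smoothing inequality \eqref{smo} as a statement purely about $A_N$: substituting $S_N = I_N - \frac{\omega}{a_0}A_N$, the inequality becomes
\[
\mathbf{x}^*\Big(A_N - \tfrac{2\omega}{a_0}A_N^2 + \tfrac{\omega^2}{a_0^2}A_N^3\Big)\mathbf{x} \le \mathbf{x}^*A_N\mathbf{x} - \tfrac{\delta}{a_0^2}\mathbf{x}^*A_N^2\mathbf{x},
\]
i.e., after cancelling $\mathbf{x}^*A_N\mathbf{x}$ and multiplying by $a_0^2/(\omega)$,
\[
\mathbf{x}^*A_N^2\Big(\tfrac{\delta}{\omega} + \omega A_N - 2a_0 I_N\Big)\mathbf{x} \le 0 \qquad \forall \mathbf{x}\in\mathbb{C}^N.
\]
Since $A_N^2$ is positive definite, it suffices that $\omega A_N - 2a_0 I_N + \tfrac{\delta}{\omega}I_N \preceq 0$, i.e.\ that $\delta/\omega \le 2a_0 - \omega\,\lambda_{\max}(A_N)$.

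The key estimate is then the uniform bound $\lambda_{\max}(A_N) = \|A_N\| = \|T^{(2)}_N(\mathbf{\mathcal{F}}_{(\alpha,\beta)})\| \le \|\mathbf{\mathcal{F}}_{(\alpha,\beta)}\|_\infty$, which holds by \eqref{10} because $\mathbf{\mathcal{F}}_{(\alpha,\beta)}$ is continuous, hence in $L^\infty([-\pi,\pi]^2)$, with the bound independent of $N$ (here one uses $\frac{s}{r}=\mathcal{O}(1)$ so that the coefficient of $q_\beta$ stays bounded, and $\frac{1}{r}=o(1)$ is harmless since that term does not enter $\mathbf{\mathcal{F}}_{(\alpha,\beta)}$). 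Therefore it is enough to choose $\delta>0$ with $\delta \le \omega\big(2a_0 - \omega\|\mathbf{\mathcal{F}}_{(\alpha,\beta)}\|_\infty\big)$; the right-hand side is strictly positive precisely under the hypothesis $0<\omega<\frac{2a_0}{\|\mathbf{\mathcal{F}}_{(\alpha,\beta)}\|_\infty}$, and it is independent of $N$. One must also check the same works with $D_N$ in place of $a_0 I_N$ in the norm on the right of \eqref{smo}, but since $D_N = a_0 I_N$ exactly this is immediate: $\|\mathbf{x}\|^2_{A_N D_N^{-1}A_N} = \frac{1}{a_0}\mathbf{x}^*A_N^2\mathbf{x}$, and $\delta$ above must simply be rescaled by $a_0$, which changes nothing in the argument.

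I do not expect any serious obstacle here; the only point requiring a little care is the algebraic reduction of \eqref{smo} to a semidefiniteness condition on a polynomial in $A_N$ (equivalently, since $A_N$ is Hermitian, a pointwise inequality on its eigenvalues), and then invoking the correct version of the Toeplitz spectral-norm bound \eqref{10} to make $\lambda_{\max}(A_N)$ uniform in $N$. The hypotheses on $r$ and $s$ enter only through ensuring that $\mathbf{\mathcal{F}}_{(\alpha,\beta)}$ — and hence $a_0$ and $\|\mathbf{\mathcal{F}}_{(\alpha,\beta)}\|_\infty$ — are fixed quantities not depending on $N$, so that the resulting $\delta$ is genuinely $N$-independent, which is exactly what Theorem~\ref{theorem21} requires.
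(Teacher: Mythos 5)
Your proposal is correct and takes essentially the same route as the paper: reduce the smoothing inequality \eqref{smo} to a matrix inequality in powers of $A_N$, exploit that all matrices involved are simultaneously diagonalizable (polynomials in the Hermitian $A_N$) so the inequality reduces to a scalar polynomial inequality over the spectrum of $A_N$, and then bound $\lambda_{\max}(A_N)\le\|\mathbf{\mathcal{F}}_{(\alpha,\beta)}\|_\infty$ via \eqref{10}. The paper phrases the reduction as a congruence with $A_N^{-1/2}$ leading to the function inequality $(1-\tfrac{\omega}{a_0}\mathbf{\mathcal{F}}_{(\alpha,\beta)})^2\le 1-\tfrac{\delta}{a_0}\mathbf{\mathcal{F}}_{(\alpha,\beta)}$ and then cites \cite{0}; you carry out the algebra explicitly, which is a fine (and slightly more self-contained) presentation of the same argument. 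Two small points: the pass from $\mathbf{x}^*A_N^2(\cdots)\mathbf{x}\le 0$ to the semidefiniteness of the bracketed factor implicitly uses that the factor is a polynomial in $A_N$ (so it commutes with $A_N^2$) or equivalently that $A_N$ is nonsingular — worth stating; and the power of $a_0$ in $\|\mathbf{x}\|^2_{A_ND_N^{-1}A_N}=\tfrac{1}{a_0}\mathbf{x}^*A_N^2\mathbf{x}$ was initially off by one, though you correct it at the end and it only rescales $\delta$.
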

\begin{proof}
By recalling that $\mathbf{\mathcal{F}}_{(\alpha,\beta)}$ is nonnegative, it follows
that $a_0=\frac{\|\mathbf{\mathcal{F}}_{(\alpha,\beta)}\|_1}{2\pi}$. Hence the
relation given in (\ref{smo}) is equivalent to the inequality
\[
\left(I_N-\omega D^{-1}_NA_N\right)A_N\left(I_N-\omega
D^{-1}_NA_N\right)\leq A_N-\delta A_ND^{-1}_NA_N,
\]
which can be rewritten as
\begin{equation}\label{ll}
\left(I_N-\omega D^{-1}_NA_N\right)^2\leq I_N-\delta
A^{\frac{1}{2}}_ND^{-1}_NA^{\frac{1}{2}}_N,
\end{equation}
using a congruence transformation with $A^{-\frac{1}{2}}$.
Since $D_N=a_0I_N$, where $a_0=\frac{1}{r}-2(d\,
w^{(\alpha)}_1+\frac{s}{r}e\,w^{(\beta)}_1)$ and $a_0>0$ (cf.~(\ref{ww})),
under the hypothesis that
$\frac{1}{r}=\frac{2h_x^{\alpha}}{\triangle t}=o(1)$ and
$\frac{s}{r}=\frac{h_x^{\alpha}}{h_y^{\beta}}=\mathcal{O}(1)$, we
have that 
$a_0$ is independent of $N$. The inequality (\ref{ll}) reads as
\[
\left(I_N-\frac{\omega}{a_0}A_N\right)^2\leq
I_N-\left(\frac{\delta}{a_0}\right)A_N,
\]
which is implied by the function inequality
$(1-\frac{\omega}{a_0}\mathbf{\mathcal{F}}_{(\alpha,\beta)})^2\leq 1-(\frac{\delta}{a_0})\mathbf{\mathcal{F}}_{(\alpha,\beta)}$.
From the latter inequality, if $0<\omega < \frac{2a_0}{\|\mathbf{\mathcal{F}}_{(\alpha,\beta)}\|_{\infty}}$
then it exists $\delta>0$ such that equation \eqref{smo} holds and the thesis is proved (see
Proposition 3 in \cite{0} for more details).
\end{proof}

In order to prove the \emph{approximation property} \eqref{app}, we define the projector $P_N$ as
\[P_N= T^{(2)}_N(p)U^k_N,\]
where the function $p$ will be defined later and $U^k_N = K^{k_1}_{n_1}\otimes K^{k_2}_{n_2}$ is the bidimensional down-sampling operator.
Let $k_\ell = (n_\ell-(n_\ell \, \mathrm{mod} \, 2))/2$ with $\ell=1,2$, the one-dimensional down-sampling matrix $K^k_n \in \mathbb{R}^{n
\times k}$ is defined as
\[
    [K^k_n]_{i,j} =
    \left\{
    \begin{array}{l l}
        1 & \mbox{if } i = 2j - (n+1) \, \mathrm{mod} \, 2, \\
        0 & \mbox{otherwise,}
    \end{array}
    \right.
    \qquad
    j = 1,\dots,k.
\]
Lemma \ref{lemapp} gives the theoretical conditions that $p$ has to meet in order to satisfy \eqref{app}.
A preliminary proposition is necessary to extend the theoretical analysis usually done matrix algebras, like circulant or $\tau$ matrices (see e.g. \cite{0,222}) also to (multilevel) Toeplitz matrices.
\begin{pro}{\cite{27}}\label{pro21}
Given a matrix $A$, let us denote by $\rm diag(A)$ the diagonal matrix having the same diagonal of $A$.
Moreover, let us assume that the property (\ref{app}) is fulfilled by a matrix-sequence
$\{A_N\}_N$. If there exists another matrix-sequence $\{B_N\}_N$ such that
\begin{equation}\label{eq:AB1}
A_N\leq\vartheta B_N,
\end{equation}
\begin{equation}\label{eq:AB2}
\rm diag(A_N)\geq\eta\,\rm diag(B_N),
\end{equation}
$\eta,\vartheta>0$, then property (\ref{app}) is fulfilled by $\{B_N\}_N$ with the same $P_N$ as well.
\end{pro}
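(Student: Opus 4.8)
The plan is to transfer the approximation property \eqref{app} from $\{A_N\}_N$ to $\{B_N\}_N$ by chaining the two comparison inequalities \eqref{eq:AB1} and \eqref{eq:AB2} together with the hypothesis. First I would fix an arbitrary $\mathbf{x}_N\in\mathbb{C}^N$ and recall that \eqref{app} for $\{A_N\}_N$ means there exists $\xi>0$ independent of $N$ such that $\min_{\mathbf{y}\in\mathbb{C}^k}\|\mathbf{x}_N-P_N\mathbf{y}\|^2_{\mathrm{diag}(A_N)}\leq\xi\|\mathbf{x}_N\|^2_{A_N}$; here I am reading $D_N$ in \eqref{app} as $\mathrm{diag}(A_N)$, consistently with Theorem~\ref{theorem21}. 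The goal is the analogous bound with $B_N$ and $\mathrm{diag}(B_N)$ in place of $A_N$ and $\mathrm{diag}(A_N)$, and with the \emph{same} $P_N$, so it suffices to control $\|\mathbf{x}_N-P_N\mathbf{y}\|^2_{\mathrm{diag}(B_N)}$ from above by $\|\mathbf{x}_N-P_N\mathbf{y}\|^2_{\mathrm{diag}(A_N)}$ and $\|\mathbf{x}_N\|^2_{A_N}$ from above by $\|\mathbf{x}_N\|^2_{B_N}$.

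The two ingredients are then immediate from the hypotheses. On the one hand, \eqref{eq:AB2} gives $\mathrm{diag}(A_N)\geq\eta\,\mathrm{diag}(B_N)$, i.e.\ $\mathrm{diag}(B_N)\leq\eta^{-1}\mathrm{diag}(A_N)$ as Hermitian (indeed diagonal, positive) matrices, so for every vector $\mathbf{z}_N$ one has $\|\mathbf{z}_N\|^2_{\mathrm{diag}(B_N)}\leq\eta^{-1}\|\mathbf{z}_N\|^2_{\mathrm{diag}(A_N)}$; apply this with $\mathbf{z}_N=\mathbf{x}_N-P_N\mathbf{y}$. On the other hand, \eqref{eq:AB1} gives $A_N\leq\vartheta B_N$, hence $\|\mathbf{x}_N\|^2_{A_N}\leq\vartheta\|\mathbf{x}_N\|^2_{B_N}$. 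Combining, for any $\mathbf{y}$,
\[
\|\mathbf{x}_N-P_N\mathbf{y}\|^2_{\mathrm{diag}(B_N)}\leq\frac{1}{\eta}\|\mathbf{x}_N-P_N\mathbf{y}\|^2_{\mathrm{diag}(A_N)},
\]
and taking the minimum over $\mathbf{y}\in\mathbb{C}^k$ on both sides, then using \eqref{app} for $A_N$ and finally \eqref{eq:AB1},
\[
\min_{\mathbf{y}\in\mathbb{C}^k}\|\mathbf{x}_N-P_N\mathbf{y}\|^2_{\mathrm{diag}(B_N)}\leq\frac{\xi}{\eta}\|\mathbf{x}_N\|^2_{A_N}\leq\frac{\xi\vartheta}{\eta}\|\mathbf{x}_N\|^2_{B_N}.
\]
Since $\mathbf{x}_N$ was arbitrary and $\xi\vartheta/\eta$ is independent of $N$ (as $\xi,\vartheta,\eta$ all are), this is exactly the approximation property \eqref{app} for $\{B_N\}_N$ with constant $\xi_B:=\xi\vartheta/\eta$ and the same $P_N$, which is the claim.

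There is essentially no hard step here: the whole argument is the monotonicity of the energy (semi)norm under the Löwner order, applied twice. The only points requiring a word of care are bookkeeping ones — making sure that $\mathrm{diag}(B_N)$ is genuinely positive (semi)definite so that $\|\cdot\|_{\mathrm{diag}(B_N)}$ is a bona fide seminorm, which holds because $B_N$ inherits positivity in the intended application (e.g.\ $B_N$ a Toeplitz matrix with nonnegative symbol, or the Laplacian-based band matrix), and checking that the constant stays $N$-independent, which is automatic from the statement. I would note in passing that the proposition is purely linear-algebraic and does not use any structure of $P_N$, which is precisely why it lets us deduce \eqref{app} for the band preconditioner built from the Laplacian from the already-established \eqref{app} for $T^{(2)}_N(\mathbf{\mathcal{F}}_{(\alpha,\beta)})$, the projector $P_N=T^{(2)}_N(p)U^k_N$ being reused verbatim.
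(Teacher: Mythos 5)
Your proof is correct. The paper itself does not reproduce a proof of Proposition~\ref{pro21} — it is cited directly from the reference by Serra-Capizzano and Tablino-Possio — but the argument you give is exactly the intended one: the two hypotheses \eqref{eq:AB1} and \eqref{eq:AB2} express monotonicity of the relevant quadratic forms in the L\"owner order, and chaining
\[
\min_{\mathbf{y}}\|\mathbf{x}_N-P_N\mathbf{y}\|^2_{\mathrm{diag}(B_N)}
\leq\tfrac{1}{\eta}\min_{\mathbf{y}}\|\mathbf{x}_N-P_N\mathbf{y}\|^2_{\mathrm{diag}(A_N)}
\leq\tfrac{\xi}{\eta}\|\mathbf{x}_N\|^2_{A_N}
\leq\tfrac{\xi\vartheta}{\eta}\|\mathbf{x}_N\|^2_{B_N}
\]
yields \eqref{app} for $\{B_N\}_N$ with the same $P_N$ and the $N$-independent constant $\xi\vartheta/\eta$. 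Your remark that the statement is purely linear-algebraic and structure-free in $P_N$ is also the right way to see why the paper can reuse the $\tau$-algebra projector verbatim when passing from $\tau$ to BTTB matrices.
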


\begin{lem}\label{lemapp}
Let $A_N:=T^{(2)}_{N}(\mathbf{\mathcal{F}}_{(\alpha,\beta)})$ with
$\mathbf{\mathcal{F}}_{(\alpha,\beta)}$
defined as in \eqref{P}
and $n_\ell = 2k_\ell-1$ for $\ell=1,2$.
Moreover, let
$P_N=T^{(2)}_{N}(p)U^k_N$ with $p$ the following polynomial
\begin{equation}\label{eq:q}
p(\theta_1,\theta_2)=(1+\cos(\theta_1))(1+\cos(\theta_2)),
\end{equation}
and assume that $\frac{1}{r}=\frac{2h_x^{\alpha}}{\triangle t}=o(1)$ and
$\frac{s}{r}=\frac{h_x^{\alpha}}{h_y^{\beta}}=\mathcal{O}(1)$. Then
there exists $\xi>0$ such that relation (\ref{app})
holds true.
\end{lem}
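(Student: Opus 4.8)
The plan is to establish the approximation property \eqref{app} first for the matrix-algebra counterpart of $A_N$, where the classical multigrid analysis applies directly, and then to carry it over to the genuinely Toeplitz matrix $A_N=T^{(2)}_N(\mathbf{\mathcal{F}}_{(\alpha,\beta)})$ by invoking Proposition~\ref{pro21}. Two preliminary remarks make this route effective. Since $D_N=a_0I_N$ with $a_0$ the zeroth Fourier coefficient of $\mathbf{\mathcal{F}}_{(\alpha,\beta)}$ (which is positive and bounded away from $0$ uniformly in $N$ under the hypotheses on $r$ and $s$, exactly as in Lemma~\ref{lemsmo}), relation \eqref{app} is equivalent to the spectral inequality $a_0(I_N-\Pi_N)\le\xi\,A_N$, where $\Pi_N$ is the Euclidean orthogonal projector onto $\mathrm{Range}(P_N)$; hence only $\mathrm{Range}(P_N)$ and the small-eigenvalue subspace of $A_N$ are relevant. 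Moreover, the polynomial $p$ in \eqref{eq:q} has Fourier coefficients supported in $\{-1,0,1\}^2$, so that $T^{(2)}_N(p)=\tau^{(2)}_N(p)$ exactly (a symmetric Toeplitz matrix that is tridiagonal in each level already lies in the $\tau$ algebra); consequently the prescribed grid transfer operator $P_N=T^{(2)}_N(p)U^k_N$ is simultaneously the Toeplitz and the $\tau$-algebra prolongation, and the size assumption $n_\ell=2k_\ell-1$ is precisely the one under which the $\tau$-algebra multigrid theory of \cite{0,222} is developed.

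Next I would verify, for the pair $(p,\mathbf{\mathcal{F}}_{(\alpha,\beta)})$, the conditions required by that theory. By Proposition~\ref{PaPb}, $\mathbf{\mathcal{F}}_{(\alpha,\beta)}$ is nonnegative and vanishes only at $\boldsymbol\theta=\mathbf0$, so the relevant corner set is $\mathbf0+\{0,\pi\}^2$. One checks: $p\ge0$; $p(\mathbf0)=4\ne0$ and $\sum_{\boldsymbol\eta\in\{0,\pi\}^2}p(\boldsymbol\theta+\boldsymbol\eta)\equiv4$ (so that $P_N$ is full rank with the usual normalization); and, crucially,
\begin{align*}
\limsup_{\boldsymbol\theta\to\mathbf0}\frac{|p(\boldsymbol\theta+\boldsymbol\eta)|^2}{\mathbf{\mathcal{F}}_{(\alpha,\beta)}(\boldsymbol\theta)}&<\infty\quad\text{for every }\boldsymbol\eta\in\{(\pi,0),(0,\pi),(\pi,\pi)\},\\
\limsup_{\boldsymbol\theta\to\mathbf0}\frac{|p(\mathbf0)-p(\boldsymbol\theta)|^2}{\mathbf{\mathcal{F}}_{(\alpha,\beta)}(\boldsymbol\theta)}&<\infty.
\end{align*}
All of these hold with a wide margin: $p$ vanishes identically on the lines $\theta_1=\pi$ and $\theta_2=\pi$ and quadratically in the transverse direction, whence $|p(\boldsymbol\theta+\boldsymbol\eta)|^2=O(\theta_1^4+\theta_2^4)$ and $|p(\mathbf0)-p(\boldsymbol\theta)|^2=O(|\boldsymbol\theta|^4)$ near $\mathbf0$; on the other hand, since $q_\alpha$ and $q_\beta$ have a zero at $0$ of exact order $\alpha$ and $\beta$ respectively (the remark preceding Proposition~\ref{pro4}), one has $\mathbf{\mathcal{F}}_{(\alpha,\beta)}(\boldsymbol\theta)\ge c\big(|\theta_1|^\alpha+|\theta_2|^\beta\big)\ge c\,|\boldsymbol\theta|^2$ in a neighbourhood of $\mathbf0$, because $\alpha,\beta<2$. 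Therefore every quotient above tends to $0$; this is the quantitative form of the assertion that linear interpolation is ``strong enough'' for the fractional, anisotropic symbol at hand. By \cite{0,222}, the approximation property \eqref{app} then holds for the sequence $\{\tau^{(2)}_N(\mathbf{\mathcal{F}}_{(\alpha,\beta)})\}_N$ with the operator $P_N$.

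Finally I would transfer this to $A_N=T^{(2)}_N(\mathbf{\mathcal{F}}_{(\alpha,\beta)})$ through Proposition~\ref{pro21}, applied with $\tau^{(2)}_N(\mathbf{\mathcal{F}}_{(\alpha,\beta)})$ in the role of the sequence satisfying \eqref{app} and $T^{(2)}_N(\mathbf{\mathcal{F}}_{(\alpha,\beta)})$ as the target. This requires two comparisons: $\tau^{(2)}_N(\mathbf{\mathcal{F}}_{(\alpha,\beta)})\le\vartheta\,T^{(2)}_N(\mathbf{\mathcal{F}}_{(\alpha,\beta)})$ and $\mathrm{diag}\big(\tau^{(2)}_N(\mathbf{\mathcal{F}}_{(\alpha,\beta)})\big)\ge\eta\,\mathrm{diag}\big(T^{(2)}_N(\mathbf{\mathcal{F}}_{(\alpha,\beta)})\big)=\eta\,a_0I_N$, with $\vartheta,\eta>0$ independent of $N$. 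The diagonal comparison is elementary: the diagonal entries of $\tau^{(2)}_N(\mathbf{\mathcal{F}}_{(\alpha,\beta)})$ equal $a_0$ up to contributions of the higher-order Fourier coefficients $w^{(\gamma)}_{2j+1}$, $j\ge1$, whose total is, in each direction, at most a proper fraction of the corresponding part of $a_0$ (one computes $\sum_{k\ge3}w^{(\gamma)}_k=(2-\gamma)\cdot\tfrac14(\gamma^2+\gamma-2)$ with $2-\gamma<1$), so that $\eta=\tfrac12$ works once $N$ is large. The spectral domination $\tau^{(2)}_N(\mathbf{\mathcal{F}}_{(\alpha,\beta)})\le\vartheta\,T^{(2)}_N(\mathbf{\mathcal{F}}_{(\alpha,\beta)})$ is the genuine obstacle, and is precisely the reason Proposition~\ref{pro21} cannot be bypassed: because $T^{(2)}_N(\mathbf{\mathcal{F}}_{(\alpha,\beta)})$ is ill-conditioned, with minimal eigenvalue vanishing as $N\to\infty$, no norm bound on the difference between the $\tau$-matrix and the Toeplitz matrix suffices; one must instead exploit that this difference is generated by the same nonnegative symbol and is itself small exactly on the low-frequency subspace where $T^{(2)}_N(\mathbf{\mathcal{F}}_{(\alpha,\beta)})$ is small, which is the content of the comparison results underlying Proposition~\ref{pro21} (see \cite{27}). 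With both comparisons in hand, Proposition~\ref{pro21} yields \eqref{app} for $A_N=T^{(2)}_N(\mathbf{\mathcal{F}}_{(\alpha,\beta)})$ with the same $P_N$, which is the statement of the lemma.
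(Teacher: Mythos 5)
Your proof takes essentially the same two-step route as the paper's: verify the classical conditions for the approximation property in the two-level $\tau$ algebra, then transfer the result to the BTTB matrix $T^{(2)}_N(\mathbf{\mathcal{F}}_{(\alpha,\beta)})$ via Proposition~\ref{pro21}, using that $T^{(2)}_N(p)=\tau^{(2)}_N(p)$ because $p$ is tridiagonal in each level and $n_\ell=2k_\ell-1$. Your verification of \eqref{marco} is actually a bit cleaner than the paper's: rather than invoking Proposition~\ref{pro4} (which bounds the $\limsup$ of $\mathbf{\mathcal{F}}_{(\alpha,\beta)}/|\boldsymbol\theta|^{\gamma}$ from above and below but whose $\liminf$ is zero, so it does not supply the lower bound on the symbol that \eqref{marco} actually needs), you use the elementary fact that $|\theta_i|^{\alpha},|\theta_i|^{\beta}\ge|\theta_i|^2$ near the origin since $\alpha,\beta<2$, which gives $\mathbf{\mathcal{F}}_{(\alpha,\beta)}(\boldsymbol\theta)\ge c|\boldsymbol\theta|^2$ uniformly and hence dominates the fourth-order zeros of $p^2$ at the mirror points, recovering $c=0$.

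The gap is in the transfer step. Proposition~\ref{pro21} takes as an \emph{input} the spectral comparison $\tau^{(2)}_N(\mathbf{\mathcal{F}}_{(\alpha,\beta)})\le\vartheta\,T^{(2)}_N(\mathbf{\mathcal{F}}_{(\alpha,\beta)})$, i.e.\ condition \eqref{eq:AB1}. You correctly flag this as the delicate point — no $N$-uniform norm bound on the Hankel correction suffices because $T^{(2)}_N(\mathbf{\mathcal{F}}_{(\alpha,\beta)})$ is asymptotically singular — but you then close the loop by attributing the domination to ``the comparison results underlying Proposition~\ref{pro21} (see \cite{27}).'' That is circular: \cite{27} proves the implication of Proposition~\ref{pro21} \emph{assuming} \eqref{eq:AB1}; it does not supply it. The structural comparison between $\tau$ and Toeplitz matrices generated by the same nonnegative symbol must come from elsewhere, and the paper obtains it from Theorem~7.1 in \cite{2424}. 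Without a proof or the correct citation of this inequality your argument does not close. A secondary remark: the diagonal comparison \eqref{eq:AB2} is more convoluted than needed. Since the even-index Fourier coefficients of $q_\gamma$ are $(q_\gamma)_{2j}=-w^{(\gamma)}_{2j+1}\le0$ for $j\ge1$ by Proposition~\ref{ww}, the diagonal of the $\tau$ matrix in fact dominates that of the Toeplitz matrix entrywise, so $\eta=1$ is immediate; your $\eta=\tfrac12$ is sufficient but suggests you had the direction of the Hankel correction reversed.
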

\begin{proof}
The proof combines classical results on multigrid methods for Toeplitz matrices with the spectral results in Section \ref{sec:spectr}.
According to Proposition \ref{PaPb}, the function $\mathbf{\mathcal{F}}_{(\alpha,\beta)}$ is nonnegative and vanishes only at the origin.
Moreover, thanks to Proposition \ref{pro4}, the polynomial $p$ in \eqref{eq:q} satisfies the classical condition
\begin{eqnarray}\label{marco}
\limsup\limits_{\mathbf{x}\to(0,0)}\frac{p(\mathbf{y})^2}{\mathbf{\mathcal{F}}_{(\alpha,\beta)}(\mathbf{x})}=c<+\infty,
~~~~~~~~~~\forall\mathbf{y}\in M(\mathbf{x}),
\end{eqnarray}
with $M(\mathbf{x})$ being the set of the ``mirror points" of
$\mathbf{x}$ defined as $M(\mathbf{x})=\{(x_1,\pi-x_2),(\pi-x_1,x_2),(\pi-x_1,\pi-x_2)\}$,
see \cite{2424} for the derivation of condition \eqref{marco}.
In particular, the condition \eqref{marco} holds with $c=0$.
Therefore, the TGM defined in the two-level $\tau$ algebra would be convergent.

The result for BTTB matrices follows from Proposition \ref{pro21}.
Note that $T^{(2)}_{N}(p)$ is a two-level $\tau$ matrix and $n_\ell = 2k_\ell-1$ for $\ell=1,2$. Hence the projector
$P_N$ is exactly the same projector used in TGM defined in the two-levels $\tau$ algebra, cf. \cite{2424}.
Finally, Proposition \ref{pro21} can be applied replacing the sequences $\{A_N\}_N$ and $\{B_N\}_N$ with the sequences of two-levels $\tau$ and BTTB matrices generated by $\mathbf{\mathcal{F}}_{(\alpha,\beta)}$, respectively.
Indeed the condition \eqref{eq:AB1} follows from Theorem 7.1 in  \cite{2424}, while the condition \eqref{eq:AB2}
is a trivial consequence of Proposition \ref{ww} with $\eta=1$.
\end{proof}

\begin{rem}
The grid transfer operator associated to the symbol $p$ defined as in \eqref{eq:q}
is, up to a constant factor,  the classical bilinear interpolation.
\end{rem}

By Lemmas \ref{lemsmo} and \ref{lemapp}, it follows that there exist
$\delta$ and $\xi$ such that inequalities (\ref{smo}) and
(\ref{app}) hold, respectively. Therefore, by Theorem
\ref{theorem21}, $\xi\geq\delta$ and
$\|TGM(S_N,P_N)\|_{A_N}\leq\sqrt{1-\delta/\xi}$.
The result can be summarized as follows.
\begin{theorem}\label{theorem21bis}
Let $A_N:=T^{(2)}_{N}(\mathbf{\mathcal{F}}_{(\alpha,\beta)})$ with $\mathbf{\mathcal{F}}_{(\alpha,\beta)}$
defined as in \eqref{P}.
\begin{itemize}
\item
Let $S_N:=I_N-\omega D^{-1}_NA_N$, where $0<\omega<\frac{2a_0}{\|\mathbf{\mathcal{F}}_{(\alpha,\beta)}\|_{\infty}}$ and $D_N=a_0I_N$ with $a_0$ the Fourier coefficient of $\mathbf{\mathcal{F}}_{(\alpha,\beta)}$ of order zero (Jacobi smoother).
\item
Let
$P_N=T^{(2)}_{N}(p)U^k_N$ with $p$ defined as in \eqref{eq:q} (bilinear interpolation).
\end{itemize}
Moreover, let us
assume that $\frac{1}{r}=\frac{2h_x^{\alpha}}{\triangle t}=o(1)$ and
$\frac{s}{r}=\frac{h_x^{\alpha}}{h_y^{\beta}}=\mathcal{O}(1)$.
Then the assumptions of Theorem \ref{theorem21} are satisfied and it holds
\[
\|TGM(S_N,P_N)\|_{A_N}\leq c <1,
\]
with $c$ a constant independent of $N, \alpha$, and $\beta$.
\end{theorem}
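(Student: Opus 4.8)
The plan is to feed the two preceding lemmas into the abstract convergence result of Theorem~\ref{theorem21}. By Lemma~\ref{lemsmo} the damped-Jacobi smoother $S_N=I_N-\omega D_N^{-1}A_N$ satisfies the smoothing property \eqref{smo} for some $\delta>0$ as long as $0<\omega<2a_0/\|\mathbf{\mathcal{F}}_{(\alpha,\beta)}\|_\infty$; by Lemma~\ref{lemapp} the projector $P_N=T^{(2)}_N(p)U^k_N$ with $p$ as in \eqref{eq:q} satisfies the approximation property \eqref{app} for some $\xi>0$. Since $\mathbf{\mathcal{F}}_{(\alpha,\beta)}$ is real-valued, nonnegative and vanishes only at the origin (Proposition~\ref{PaPb}), the matrix $A_N=T^{(2)}_N(\mathbf{\mathcal{F}}_{(\alpha,\beta)})$ is Hermitian positive definite, so Theorem~\ref{theorem21} applies and yields $\xi\ge\delta$ together with $\|TGM(S_N,P_N)\|_{A_N}\le\sqrt{1-\delta/\xi}$. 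It remains to choose $\omega$ (hence fix $\delta$) and to bound $\xi$ in such a way that $\delta/\xi$ is bounded away from $0$ by a constant independent of $N$ and of $\alpha,\beta$; that is precisely what upgrades the estimate to the claimed uniform $c<1$.

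For the smoother I would use the distinguished admissible value $\omega=a_0/\|\mathbf{\mathcal{F}}_{(\alpha,\beta)}\|_\infty$, for which (cf.\ Proposition~3 in \cite{0}) \eqref{smo} holds with $\delta=a_0/\|\mathbf{\mathcal{F}}_{(\alpha,\beta)}\|_\infty$. Now $\|\mathbf{\mathcal{F}}_{(\alpha,\beta)}\|_\infty=\mathbf{\mathcal{F}}_{(\alpha,\beta)}(\pi,\pi)=d\,q_\alpha(\pi)+\tfrac{s}{r}e\,q_\beta(\pi)$ with $q_\gamma(\pi)=(\gamma-1)2^{\gamma+1}$, while $a_0\ge -2\bigl(d\,w^{(\alpha)}_1+\tfrac{s}{r}e\,w^{(\beta)}_1\bigr)$ with $-2w^{(\gamma)}_1=(\gamma-1)(\gamma+2)$ (Proposition~\ref{ww} and \eqref{w}). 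Since the coordinatewise quotients $\tfrac{-2w^{(\gamma)}_1}{q_\gamma(\pi)}=\tfrac{\gamma+2}{2^{\gamma+1}}$ lie in $[\tfrac{1}{2},\tfrac{3}{4}]$ for every $\gamma\in(1,2)$, the elementary mediant inequality gives $\delta=a_0/\|\mathbf{\mathcal{F}}_{(\alpha,\beta)}\|_\infty\ge\tfrac{1}{2}$ for all admissible $N,\alpha,\beta$. Consequently the only quantity still to be controlled uniformly is $\xi$: it suffices to exhibit an absolute constant $C$ with $\xi\le C$, which then gives $\|TGM(S_N,P_N)\|_{A_N}\le\sqrt{1-1/(2C)}=:c<1$ independently of $N,\alpha,\beta$.

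The hard part will be precisely this uniform bound on $\xi$. The route I would take is to normalize, replacing $A_N$ by $A_N/a_0=T^{(2)}_N(\mathbf{\mathcal{F}}_{(\alpha,\beta)}/a_0)$ — which leaves $\delta$ and $\xi$ unchanged, since the diagonal of the normalized matrix is $I_N$ — and to verify that $\mathbf{\mathcal{F}}_{(\alpha,\beta)}/a_0$ stays, uniformly in $\alpha,\beta$, inside a fixed admissible class for the Toeplitz/$\tau$ multigrid theory of \cite{2424,222}: it is bounded above by $2$, since $\delta=a_0/\|\mathbf{\mathcal{F}}_{(\alpha,\beta)}\|_\infty\ge\tfrac{1}{2}$; its zero at the origin has order $\gamma=\min\{\alpha,\beta\}\in(1,2)$ and satisfies the interpolation condition \eqref{marco} with $c=0$ for every $\alpha,\beta$ because $\gamma<2$ (Proposition~\ref{pro4}); and, most delicately, it must be bounded below by an absolute constant away from the origin, in particular at the mirror points $(\pi,0),(0,\pi),(\pi,\pi)$ where the coarse correction is blind and only the smoother acts. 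Establishing this last lower bound is the crux of the whole argument, and it is where the standing hypotheses $\tfrac{1}{r}=\tfrac{2h_x^{\alpha}}{\triangle t}=o(1)$ and $\tfrac{s}{r}=\tfrac{h_x^{\alpha}}{h_y^{\beta}}=\mathcal{O}(1)$ must be exploited, so as to prevent the two spatial directions from becoming lopsided in the normalized symbol. Granting it — and using that the $\tau$-to-BTTB transfer in Proposition~\ref{pro21} involves only the comparison constant $\vartheta$ of Theorem~7.1 in \cite{2424} and $\eta=1$, both independent of $\alpha,\beta$ — one can run the explicit approximation estimates of \cite{2424} to conclude $\xi\le C$, and the proof is complete.
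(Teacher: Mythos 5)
Your first paragraph reproduces exactly the paper's argument: it feeds Lemma~\ref{lemsmo} (smoothing) and Lemma~\ref{lemapp} (approximation) into Theorem~\ref{theorem21} to obtain $\|TGM(S_N,P_N)\|_{A_N}\le\sqrt{1-\delta/\xi}$. Up to that point you are in lockstep with the paper.

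Where you diverge is that you take the claim ``$c$ independent of $\alpha,\beta$'' seriously and try to establish it, which the paper does not actually do. Lemmas~\ref{lemsmo} and~\ref{lemapp} only assert the existence of some $\delta>0$ and $\xi>0$ for each fixed $\alpha,\beta$; neither lemma, nor the paragraph preceding Theorem~\ref{theorem21bis}, proves that these constants stay bounded away from $0$ (for $\delta$) or bounded above (for $\xi$) uniformly over $\alpha,\beta\in(1,2)$. Your explicit lower bound $\delta\ge\frac12$ is a genuine, correct addition: choosing $\omega=a_0/\|\mathbf{\mathcal{F}}_{(\alpha,\beta)}\|_\infty$ gives $\delta=a_0/\|\mathbf{\mathcal{F}}_{(\alpha,\beta)}\|_\infty$, the identities $q_\gamma(\pi)=(\gamma-1)2^{\gamma+1}$ and $-2w_1^{(\gamma)}=(\gamma-1)(\gamma+2)$ check out from~\eqref{eq:fgamma} and Proposition~\ref{ww}, and the mediant inequality then gives $\frac12<\delta<\frac34$ uniformly. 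This step is an improvement on the paper.

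However, you explicitly defer the other half of the uniformity, the bound $\xi\le C$, with the phrase ``Granting it\ldots'': that is the heart of the $\alpha,\beta$-independence and it is not proved. Condition~\eqref{marco} holding with $c=0$ for each $(\alpha,\beta)$ (via Proposition~\ref{pro4}) is a pointwise statement; to get $\xi$ uniform you would need a quantitative, $\gamma$-uniform version of the approximation estimates in~\cite{2424}, together with a uniform lower bound on the normalized symbol away from the origin (your ``most delicate'' item), neither of which is carried out. So your proposal is not complete. Worth noting for honesty: the paper's own text also does not carry out this part --- it derives only existence of $\delta,\xi$ and then states the uniform constant without further argument --- so you have correctly located a lacuna that the source itself leaves unaddressed, and you have partially filled it on the $\delta$ side.
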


The variable coefficients case can be addressed thanks to the
extension of the previous results given in \cite{2424}. Let
$d_{\pm}$ and $e_{\pm}$ be four uniformly bounded and positive
functions. Then the linear convergence rate of the two-grid method
is preserved combining Proposition \ref{pro6} with Lemma 6.2 in
\cite{2424}.

\subsection{V-cycle and geometric mulgrid}\label{sec:vgeom}

The convergence analysis of the V-cycle is much more involved and a
linear convergence rate has been proven, under a condition stricter than (\ref{marco}), only for sequences of
matrices in some trigonometric algebras, like the $\tau$ algebra used in the proof of Lemma \ref{lemapp}, see \cite{222}. In details, the symbol
$p$ of the grid transfer operator has to satisfy
\begin{eqnarray}\label{Vmarco}
\limsup\limits_{\mathbf{x}\to (0,0)}\frac{p(\mathbf{y})}{\mathbf{\mathcal{F}}_{(\alpha,\beta)}(\mathbf{x})}=c<+\infty,
~~~~~~~~~~\forall\mathbf{y}\in M(\mathbf{x}).
\end{eqnarray}
With respect to \eqref{marco}, the numerator does not have the power two and hence $p$ has to vanish at the mirror points with double order.
\begin{rem}
Similarly to Lemma \ref{lemapp}, for $p$ defined as in \eqref{eq:q}
the condition \eqref{Vmarco} holds true with $c=0$.
\end{rem}
The previous remark
suggests that the bilinear interpolation is powerful enough to work
also under some perturbations.
In particular, we could use the geometric multigrid instead of the Galerkin approach.
The geometric multigrid defines the coarser matrix $A_k$ rediscretizing the original differential operator on the coarser grid instead of computing $A_k$ by $A_k=P_N^TA_NP_N$.
This saves some computational costs with respect to Galerkin approach which requires the computation of the
coarser matrices at each recursion level in a precomputing phase.
In particular, in the variable coefficient case the coarser matrices lose the GLT structure in terms of diagonal and BTTB matrices. Therefore we should memorize all the coefficients of the two-level lower Hessenberg coefficient matrices and the computational cost of the precomputing phase is much higher than the $\mathcal{O}(N \log N)$ arithmetic operations required for the matrix-vector product with the matrix $\mathbf{\mathcal{M}}^{(m)}_{(\alpha,\beta),N}$.

In conclusion, the Galerkin approach is useful for the theoretical analysis in Subsection \ref{sec:tgmconv}, but it is not computationally feasible. Therefore, we implement the geometric approach and we denote by $\mathbf{\mathcal{P}}^{(m)}_{{\rm MGM},N}$ one iteration of the geometric multigrid algorithm applied to the linear system with coefficient matrix ${\cal M}_{(\alpha,\beta),N}^{(m)}$ and defined by the following setting:
\begin{itemize}
\item V-cycle with a number of recursion levels depending on the size $N$ (coarsest grid fixed to $8 \times 8$).
\item The smoother is one step of pre- and post-smoother for the classical Jacobi method (according to Lemma~\ref{lemsmo}).
\item The grid transfer operator is the standard bilinear interpolation and full-weighting corresponding to a proper scaling of the symbol $p$ defined as in \eqref{eq:q} (according to Lemma \ref{lemapp}).
\end{itemize}

\subsection{Preconditioning}

According to the theoretical analysis in Subsection \ref{sec:tgmconv} and the discussion at the end of Subsection \ref{sec:vgeom}, our geometric multigrid
can be effectively used as a stand alone solver. Nevertheless, in order to increase their robustness, multigrid methods are often applied as preconditioners for Krylov methods. Moreover, the application of multigrid methods as preconditioners allows a simple comparison with existing strategies based on (inverse) band or circulant preconditioners, see e.g., \cite{M1,15, 35}.

A further possibility is the combination of a band preconditioner with a multigrid method based on the Galerkin approach. The resulting strategy keeps the computational cost of the precomputing phase to $\mathcal{O}(N)$.
A simple band preconditioner suitable for Galerkin multigrid methods is the Laplacian preconditioner ($\alpha=\beta=2$). Such preconditioner is inspired by the 1D analysis in \cite{M2}, where the use of the Laplacian was introduced for fractional derivatives of order greater than 1.5.
In this paper, independently of the values of $\alpha$ and $\beta$, we propose the preconditioner
\begin{eqnarray*}
\begin{cases}
P^{(m)}_x=D^{(m)}_+(I_{n_2}\otimes
A^{2}_{n_1})+D^{(m)}_-(I_{n_2}\otimes (A^{2}_{n_1})^T),\cr
P^{(m)}_y=E^{(m)}_+(A^{2}_{n_2}\otimes
I_{n_1})+E^{(m)}_-((A^{2}_{n_2})^T\otimes I_{n_1}),\cr
\mathbf{\mathcal{P}}^{(m)}_{2,N}=\frac{1}{r}I_{N}-\bigg(P^{(m)}_x+\frac{s}{r}P^{(m)}_y\bigg),
\end{cases}
\end{eqnarray*}
where $A^{2}_{n_j}$, for $j=1,2$ is the Laplacian matrix.
The subscript $2$ in $\mathbf{\mathcal{P}}^{(m)}_{2,N}$ recall the order of the derivative used in the preconditioner.
Note that the matrix $\mathbf{\mathcal{P}}^{(m)}_{2,N}$ has only five nonzero diagonals.
On the other hand, due to fill-in, the direct solution of the linear system with coefficient matrix $\mathbf{\mathcal{P}}^{(m)}_{2,N}$ is not feasible. Therefore, instead of solving the corresponding linear system, we apply only one step of our multigrid consisting of the standard Galerkin approach with Jacobi smoothing and bilinear interpolation as grid transfer operator. Note that Theorem \ref{theorem21bis} holds also with $\alpha=\beta=2$ and the resulting preconditioner has a computational cost linear in $N$.

The goodness of this preconditioner depends on the values of $\alpha$ and $\beta$.
To define a preconditioner with the same structure of the matrix
$\mathbf{\mathcal{M}}^{(m)}_{(\alpha,\beta),N}$ and keeping at the same time a
small bandwidth, the symbol of a banded BTTB matrix has to be a
trigonometric polynomial and hence the zero of the symbol cannot be
of fractional order.
Nevertheless, the condition number of the preconditioned matrix $(\mathbf{\mathcal{P}}^{(m)}_{2,N})^{-1}{\cal M}_{(\alpha,\beta),N}^{(m)}$ is asymptotical to $N^{\frac{2-\gamma}{2}}$, with $\gamma=\max\{\alpha,\beta\}$ s.t. $0<2-\gamma<1$, 
and hence the number of iterations of a conjugate gradient type method grows as $\mathcal{O}(N^{\frac{2-\gamma}{4}})$, see~\cite{Axel}. In conclusion the preconditioner     $\mathbf{\mathcal{P}}^{(m)}_{2,N}$ is a good choice whenever $\alpha$ or $\beta$ are close to $2$, as confirmed by the numerical results in the next section.


\section{Numerical results}\label{Example}
In this section we test the effectiveness of our new multigrid
preconditioners $\mathbf{\mathcal{P}}^{(m)}_{2,N}$ and  $\mathbf{\mathcal{P}}^{(m)}_{{\rm MGM},N}$ defined in the previous section. Since two of the three proposed examples are taken from \cite{M1} we compare the performances of our preconditioners with the proposal therein (same Krylov method and same tolerance are used).

In the following tables, according to the notation defined in Section \ref{Introduction}, $n_1$ and $n_2$
denote the numbers of spatial partitions in $x$-direction and
$y$-direction, respectively, while $M$ denotes the number
of time steps.
In all our examples we simply fix $n_1=n_2=M$.
The infinite norm of the difference between the exact solution and the numerical solution at the last time step is denoted by ``$Error$".
The number of iterations is computed as
the arithmetic average of the number of iterations required for solving (\ref{2}) at each time step $t^{(m)}$.
For this reason our multigrid preconditioners will be simply denoted by $\mathbf{\mathcal{P}}_{2}$ and  $\mathbf{\mathcal{P}}_{{\rm MGM}}$.
We use the GMRES method to solve the discretized linear systems (\ref{2}).
The GMRES method is computationally performed using the built-in \textsf{gmres} Matlab function with tolerance
$10^{-7}$ with a restarting every 20 iterations, even if for the preconditioned iterations it is not necessary.
The initial guess at each time step is chosen as the zero
vector. Our computations are performed using Matlab 7.10 software
on a Pentium IV, 3.50 GHz CPU machine with 4 Gbyte of memory.

\subsection{Example 1}\label{Ex2}
The first example is taken from \cite{PS2}.
We consider a FDE of type (\ref{1}) with
$\alpha=1.8,~\beta=1.6$. The nonconstant diffusion coefficients are
given by
\begin{eqnarray*}
&&d_+(x,y,t)=\Gamma(3-\alpha)(1+x)^{\alpha}(1+y)^2,~~~~~~~d_-(x,y,t)=\Gamma(3-\alpha)(3-x)^{\alpha}(3-y)^2,\\
&&e_+(x,y,t)=\Gamma(3-\beta)(1+x)^2(1+y)^{\beta},~~~~~~~e_-(x,y,t)=\Gamma(3-\beta)(3-x)^2(3-y)^{\beta}.
\end{eqnarray*}
The spatial domain is $\Omega=[0,2]\times[0,2]$, while the time
interval is $[0,T]=[0,1]$. The initial condition is
\[
u(x,y,0)=u_0(x,y)=x^2y^2(2-x)^2(2-y)^2\\
\]
and the source term is such that the solution to the FDE is given by $u(x,y,t)=16{\rm
e}^{-t}x^2(2-x)^2y^2(2-y)^2$.

Let $h = h_x=h_y = 2/(n+1)$, with $n=n_1=n_2=M$.
Therefore, we have
\[
\frac{1}{r}=\frac{2h^{\alpha}}{\triangle
t}=\frac{2^{\alpha+1}M}{(n+1)^{\alpha}}=\frac{2^{\alpha+1}n}{(n+1)^{\alpha}},
\qquad
\frac{s}{r}=\frac{h^{\alpha}}{h^{\beta}}=
2^{\alpha-\beta}(n+1)^{\beta-\alpha}.
\]
Since $\alpha=1.8$ and $\beta=1.6$ we obtain $\frac{1}{r}=O(n^{-0.8})$
and $\frac{s}{r}=O(n^{-0.2})$. Then when $n\rightarrow\infty$, both $\frac{1}{r}$ and $\frac{s}{r}$ tend to zero. As a consequence, the term in $\beta$ goes slowly to zero and for large $n$ the term in $\alpha$ becomes dominant.

Table \ref{tabel3} compares the iterations and the accuracy of numerical solutions provided by the GMRES without preconditioning and by the GMRES preconditioned with our proposals. For comparison we report also the number of iterations required by the ``exact'' preconditioners $\widetilde{\mathbf{\mathcal{P}}}_{2}$ and $\widetilde{\mathbf{\mathcal{P}}}_{{\rm MGM}}$, where $\widetilde{\mathbf{\mathcal{P}}}_{2}$ denotes the
preconditioner $\mathbf{\mathcal{P}}_{2}$ with the direct solution of the resulting linear system
and $\widetilde{\mathbf{\mathcal{P}}}_{{\rm MGM}}$
denotes the proposed multigrid applied to the matrix $\mathbf{\mathcal{M}}^{(m)}_{(\alpha,\beta),N}$, but using the Galerkin approach instead of the rediscretization.
Note that the geometric approach is as effective as the Galerkin approach ($\mathbf{\mathcal{P}}_{{\rm MGM}}$ and $\widetilde{\mathbf{\mathcal{P}}}_{{\rm MGM}}$ provide the same number of iterations). On the contrary, one step of V-cycle for the preconditioner $\mathbf{\mathcal{P}}_{2}$ is not as good as its direct inversion, i.e., $\widetilde{\mathbf{\mathcal{P}}}_{2}$. Nevertheless, it shows a linear convergence rate and it is computationally very cheap since it is a pentadiagonal matrix. A good compromise could be to increase the number of iterations of the preconditioner $\mathbf{\mathcal{P}}_{2}$, for instance, fixing $n=2^7$ and applying two steps of V-cycle instead of one, the number of iterations reduces from 17 to 14.


\begin{table}
\caption{Comparison of average number of iterations for the GMRES methods with
different preconditioners for Example \ref{Ex2}.}
\begin{tabular*}{\columnwidth}{@{\extracolsep{\fill}}*{15}{c}}
\toprule\toprule
\multicolumn{1}{c}{}
$n_1=n_2$ &    GMRES & $\mathbf{\mathcal{P}}^{(m)}_{2,N}$  & $\widetilde{\mathbf{\mathcal{P}}}^{(m)}_{2,N}$  &  $\mathbf{\mathcal{P}}^{(m)}_{{\rm MGM},N}$ &  $\widetilde{\mathbf{\mathcal{P}}}^{(m)}_{{\rm MGM},N}$ & $Error$ \\
\midrule
$2^4$ &  $37.000$ & $21.000$ &9.000&  $10.000$& $10.000$ &  $9.3706 \times 10^{-2}$ \\
$2^5$ &  $73.000$ & $18.781$ &9.000 &  $11.000$ & $11.000$&  $2.4747 \times 10^{-2}$ \\
$2^6$ &  $137.000$ &  $17.000$ & 9.000 & $11.000$& $11.000$ &  $6.3630 \times 10^{-3}$ \\
$2^7$ &  $251.000$ & $17.000$ &10.000&  $10.000$& $10.000$ &  $1.6053 \times 10^{-3}$ \\
\bottomrule\bottomrule
\end{tabular*}
\label{tabel3}
\end{table}

\subsection{Example 2}\label{Ex1}
This example is taken from \cite{M1}.
We consider a FDE of type (\ref{1}) with
$\alpha=1.8,~\beta=1.9$. The nonconstant diffusion coefficients are
given by
\begin{eqnarray*}
&&d_+(x,y,t)=4(1+t)x^{\alpha}(1+y),~~~~~~~d_-(x,y,t)=4(1+t)(1-x)^{\alpha}(1+y),\\
&&e_+(x,y,t)=4(1+t)(1+x)y^{\beta},~~~~~~~e_-(x,y,t)=4(1+t)(1+x)(1-y)^{\beta}.
\end{eqnarray*}
The spatial domain is $\Omega=[0,1]\times[0,1]$, while the time
interval is $[0,T]=[0,1]$. The initial condition is
\[
u(x,y,0)=u_0(x,y)=x^3y^3(1-x)^3(1-y)^3
\]
and the source term is such that the solution to the FDE is
given by $u(x,y,t)={\rm e}^{-t}x^3(1-x)^3y^3(1-y)^3$.

Note that
\[
\frac{1}{r}=\frac{2h^{\alpha}}{\triangle
t}=\frac{2M}{(n+1)^{\alpha}}=\frac{2n}{(n+1)^{\alpha}},
\qquad
\frac{s}{r}=\frac{h^{\alpha}}{h^{\beta}}=\frac{(n+1)^{\beta}}{(n+1)^{\alpha}}=(n+1)^{\beta-\alpha}.
\]
Since $\alpha=1.8$ and $\beta=1.9$  we obtain $\frac{1}{r}=O(n^{-0.8})$
and $\frac{s}{r}=O(n^{0.1})$, i.e., when $n\rightarrow\infty$,
$\frac{1}{r}$ goes to zero and $\frac{s}{r}$ tends to infinity. On the other hand, $\frac{s}{r}$ grows very slowly and the numerical results are not affected by this small grow.

In Table \ref{tabel1} we compare the average number of iterations obtained by the GMRES and by our preconditioners with the performances of the preconditioner proposed in \cite{M1} and denoted by $P_{JLZ}$. Moreover, we show the accuracy of the numerical solution. We observe that in this example the average number of iterations for the preconditioner $P_{JLZ}$ grows with the algebraic size of the problem. Conversely, our preconditioners show a linear convergence rate with a lower computational cost per iteration, especially the preconditioner
$\mathbf{\mathcal{P}}_{2}$ which has only five nonzero diagonals and applies only two Jacobi iterations on each grid.


\begin{table}
\caption{Comparison of average number of iterations for the GMRES methods with different preconditioners for Example \ref{Ex1}. The preconditioner $P_{JLZ}$ has been proposed in \cite{M1}.}
\begin{tabular*}{\columnwidth}{@{\extracolsep{\fill}}*{15}{c}}
\toprule\toprule
\multicolumn{1}{c}{}
$n_1=n_2$ &    GMRES & $P_{JLZ}$ & $\mathbf{\mathcal{P}}_{2}$ & $\mathbf{\mathcal{P}}_{{\rm MGM}}$ &$Error$ \\
\midrule
$2^4$ &  $48.750$ & $11.000$ &$18.063$ & $9.000$ &  $1.1386 \times 10^{-6}$  \\
$2^5$ &  $81.594$ & $12.406$ &  $15.813$ & $9.000$ &  $3.0206 \times 10^{-7}$ \\
$2^6$ &  $157.750$ & $14.250$ & $11.531$ & $10.000$ &  $7.6925 \times 10^{-8}$ \\
$2^7$ &  $273.914$ & $17.055$ &  $12.000$ & $9.891$ &  $1.9392 \times 10^{-8}$ \\
\bottomrule\bottomrule
\end{tabular*}
\label{tabel1}
\end{table}

\subsection{Example 3}\label{Ex3}
Also this third example is taken from \cite{M1} and $\alpha,\beta$ are the same as in Example 2. The nonconstant diffusion coefficients are given by
\[
d_+(x,y,t)=6x^{\alpha}, \qquad d_-(x,y,t)=6(1-x)^{\alpha}, \qquad
e_+(x,y,t)=6y^{\beta}, \qquad e_-(x,y,t)=6(1-y)^{\beta}.
\]
The spatial domain is $\Omega=[0,1]\times[0,1]$, while the time
interval is $[0,T]=[0,1]$. The initial condition is
\[
u(x,y,0)=u_0(x,y)=x^3y^3(1-x)^3(1-y)^3
\]
and the source term is such that the solution of the FDE is
given by $u(x,y,t)={\rm e}^{-t}x^3(1-x)^3y^3(1-y)^3$.

The behaviour of $\frac{1}{r}$ and $\frac{s}{r}$ is the same of the previous example.
The results in Table \ref{tabel4} are comparable to those of the previous example and again our preconditioners lead to a fast convergence with a number of iterations independent of the size $N$.

\begin{table}
\caption{Comparison of average number of iterations for the GMRES methods with different preconditioners for Example \ref{Ex3}. The preconditioner $P_{JLZ}$ has been proposed in \cite{M1}.}
\begin{tabular*}{\columnwidth}{@{\extracolsep{\fill}}*{15}{c}}
\toprule\toprule
\multicolumn{1}{c}{}
$n_1=n_2$ &    GMRES & $P_{JLZ}$ & $\mathbf{\mathcal{P}}_{2}$ & $\mathbf{\mathcal{P}}_{{\rm MGM}}$ &$Error$ \\
\midrule
$2^4$ &  $36.000$ & $11.000$ & $17.000$ & $9.000$ &  $1.1486 \times 10^{-6}$  \\
$2^5$ &  $63.694$ & $12.250$ &  $15.000$ & $9.000$ &  $2.9187 \times 10^{-7}$ \\
$2^6$ &  $113.234$ & $13.813$ & $11.000$ & $8.000$ &  $7.4461 \times 10^{-8}$ \\
$2^7$ &  $173.008$ & $15.465$ &  $11.000$ & $8.000$ &  $1.8782 \times 10^{-8}$ \\
\bottomrule\bottomrule
\end{tabular*}
\label{tabel4}
\end{table}

\section{Conclusions}\label{Conclusions}
In this paper we have investigated two-dimensional space-FDE problems discretized by means of a second order finite difference scheme obtained as combination of the Crank-Nicolson scheme and the so-called weighted and shifted Gr\"unwald formula. We have provided a detailed spectral analysis of the coefficient matrix by means of its symbol, both in the constant and variable coefficients case.
Thanks to the symbol analysis and the theory of multigrid methods for BTTB matrices, we have designed a classical multigrid method particularly effective for the considered problem. Two multigrid preconditioners for GMRES has been proposed and some numerical examples taken from the literature show that they provide a fast convergence independent of the algebraic size of the problem.

Multigrid methods have shown a good scalability in the dimensionality of the problem and they could be a good choice also for 3D and 4D problems since their extension is straightforward. Moreover, they could be effectively applied also to other discretization strategies, like the finite volume method proposed in \cite{Ng}.

\section*{Acknowledgment}
The work of the last two authors is partly supported by the Italian
grant MIUR - PRIN 2012 N. 2012MTE38N and by GNCS-INDAM (Italy).

\bibliographystyle{elsart}

\end{document}